\def\ps@pprintTitle{%
   \let\@oddhead\@empty
   \let\@evenhead\@empty
   \def\@oddfoot{\reset@font\itshape\footnotesize\hfill\today}%
   \let\@evenfoot\@oddfoot}
\newtheorem{theorem}{Theorem}[section]
\newtheorem{lemma}[theorem]{Lemma}
\newtheorem{proposition}[theorem]{Proposition}
\newtheorem{definition}[theorem]{Definition}
\theoremstyle{definition}
\newtheorem{assumption}[theorem]{Assumption}
\newtheorem{remark}[theorem]{Remark}
\newcommand{\E}{\ensuremath{\mathbb{E}}}  
\newcommand{\R}{\ensuremath{\mathbb{R}}}
\newcommand{\N}{\ensuremath{\mathbb{N}}}
\newcommand{\Prob}{\ensuremath{\mathbb{P}}}
\DeclareMathOperator*{\argmin}{arg\,min}
\newcommand{\eps}{\varepsilon}
\begin{document}

\begin{frontmatter}

\title{Mirror descent for stochastic control problems with measure-valued controls}
\ifpdf
\hypersetup{
pdftitle={Mirror descent for stochastic control problems},
pdfauthor={B. Kerimkulov, D. \v{S}i\v{s}ka, and L. Szpruch}
}
\fi

\date{\today}
\author[label1]{Bekzhan Kerimkulov}
\ead{B.Kerimkulov@ed.ac.uk}
\author[label1]{David \v{S}i\v{s}ka}
\ead{D.Siska@ed.ac.uk}
\author[label1,label2]{{\L}ukasz  Szpruch}
\ead{L.Szpruch@ed.ac.uk}
\author[label3]{Yufei Zhang}
\ead{yufei.zhang@imperial.ac.uk}

\affiliation[label1]{organization={School of Mathematics, University of Edinburgh}}
\affiliation[label2]{organization={Alan Turing Institute}}
\affiliation[label3]{organization={Imperial College London}}

\begin{abstract}
This paper studies the convergence of the mirror descent algorithm for finite horizon stochastic control problems with measure-valued control processes. 
The control objective involves a convex regularisation function, denoted as $h$, with regularisation strength determined by the weight $\tau\ge 0$.
The setting covers regularised relaxed control problems. 
Under suitable conditions, we establish the relative smoothness and convexity of the control objective with respect to the Bregman divergence of $h$, and  prove linear convergence of the algorithm for $\tau=0$ and exponential convergence for $\tau>0$. 
The results apply to common regularisers including relative entropy, $\chi^2$-divergence, and entropic Wasserstein costs.
This validates recent reinforcement learning heuristics that adding regularisation accelerates the convergence of gradient methods. 
The proof exploits careful regularity estimates of backward stochastic differential equations in the bounded mean oscillation norm.  

\end{abstract}

\begin{keyword}
Mirror descent \sep	
stochastic control \sep
convergence rate analysis\sep 
Bregman divergence \sep
Pontryagin's optimality principle

\MSC[2020] 93E20\sep 49M05 \sep 68Q25 \sep 60H30
\end{keyword}


\end{frontmatter}

\section{Introduction}
\label{sec:intro}

This paper  designs a gradient descent algorithm for a finite horizon stochastic control problems where the controlled system is  a  
$\R^d$-valued diffusion process, 
and both the drift and diffusion coefficients of the state process are controlled by measure-valued control processes.
These measure-valued controls   include  as a special   case   relaxed controls, which are important for several reasons. 
Employing relaxed controls provides a convenient approach to guarantee the existence of a minimiser for the control problem, even when an optimal classical control may not exist \cite{nicole1987compactification}.
The  relaxed   control perspective also offers a principled framework for designing efficient  algorithms. 
It has been observed that 
the use of relaxed controls, along with  additional regularization terms,
improves algorithm stability and efficiency \cite{haarnoja2017reinforcement,wang2020reinforcement,wang2018exploration, wang2020continuous}.
The regularised relaxed control formulation further ensures the    optimal controls   are   stable with respect to model perturbations \cite{reisinger2021regularity}. This stability of optimal controls proves to be essential for optimising the sample efficiency of associated learning algorithms~\cite{basei2022logarithmic, szpruch2021exploration,
szpruch2022optimal, tang2022exploratory, guo2023reinforcement}.

In the sequel, we will give a heuristic
overview of the problem, the algorithm and our results. All notation and assumptions will be
stated precisely in Section \ref{sec main}.

\subsection*{Problem formulation}
Let 
$T\in (0,\infty)$ be a given terminal time, 
let 
$(\Omega, \mathcal{F}, \mathbb{F},  \Prob)$ be  a filtered
probability space satisfying the usual conditions, 
on which a $d'$-dimensional standard Brownian motion 
$W=(W_t)_{t\in [0,T]}$ is defined. 
Let $\mathcal P(A)$ be the space of probability measures   on a  separable metric space $A$, 
and let $\mathfrak C$ be a  convex subset of $\mathcal P(A)$.
We consider the admissible control space $\mathcal{A_{\mathfrak C}}$
containing $\mathbb F$-progressively measurable processes  
$\pi:\Omega\times [0,T]\to \mathfrak C$ satisfying suitable integrability conditions,  which will be precisely formulated later in~\eqref{eq:admissible_AC}. 
For each $\pi\in \mathcal{A_{\mathfrak C}}$, consider the following controlled dynamics: 
\begin{equation}\label{sde}
dX_s(\pi)=b_s(X_s(\pi),\pi_s)\,ds+\sigma_s(X_s(\pi),\pi_s)\,dW_s\,,\,\, s\in [0,T]\,,\,\,\, X_0 (\pi) = x_0\,,
\end{equation}
where 
$x_0\in \R^d$ is a given initial state,
$b: [0,T]\times \mathbb R^{d}\times \mathcal{P}(A)\to  \mathbb R^{d}$
and 
$\sigma:[0,T]\times\R^d\times \mathcal{P}(A)
\rightarrow\R^{d\times d'}$
are sufficiently regular coefficients such that 
\eqref{sde} admits a unique strong solution $X(\pi)$.
For a given  regularisation parameter   $\tau\ge 0$, 
the agent's objective is to minimise the following cost functional
\begin{equation}\label{problem}
J^{\tau}(\pi)\coloneqq \E\left[\int_{0}^{T}\left( f_s(X_s(\pi),\pi_s)+\tau\,h(\pi_s)\right)ds+g(X_T(\pi))\right]
\end{equation}
over all  
admissible controls $\pi\in \mathcal{A_{\mathfrak C}}$,
where 
$f:[0,T]\times \mathbb R^{d}\times \mathcal{P}(A)\to  \mathbb R$ and 
$g: \mathbb R^{d} \to  \mathbb R$ are sufficiently regular cost functions, 
and $h:\mathfrak C\to \R$ is a convex regularisation function.
Common choices  of  $h$ include the relative entropy used  in reinforcement learning 
\cite{wang2020reinforcement,wang2020continuous,giegrich2022convergence, zhao2023policy, kerimkulov2023fisher},
the $f$-divergence used in information theory and statistical learning \cite{csiszar1964informationstheoretische}, 
and the (regularised) Wasserstein cost used in optimal transport 
\cite{feydy2019interpolating,chizat2023doubly}. 
Choquet regularization is proposed in~\cite{han2023choquet} for the special case when $A=\mathbb R$.
Typically,  the regulariser $h$  does not admit higher-order dertivatives over its domain and in this sense it is less regular than the cost functions $f$ and $g$.

\subsection*{Difficulties in gradient-based algorithms.}

This work aims to design a convergent gradient-based algorithm for  a (nearly) optimal control of \eqref{problem}. 
To this end, for each $\tau\ge 0$, we 
equivalently write   \eqref{problem} as 
\begin{equation}
\label{eq:F_H}
J^\tau(\pi)=J^0(\pi) + \tau \mathcal{H}(\pi),
\quad \textnormal{with $
\mathcal{H}(\pi)\coloneqq 
\E\left[\int_{0}^{T} h(\pi_s) ds \right]$} 
\end{equation}
and the objective is to minimise this over $\pi\in \mathcal A_{\mathfrak C}$.
Such an infinite dimensional optimisation perspective facilitates extending  gradient-based algorithms designed for static minimisation problems to the current dynamic setting.
The decomposition of $J^\tau$ into $J^0$ and $\mathcal H$ further enables treating   the non-smooth regulariser $h$ separately within the   algorithms.

A well-established gradient-based algorithm for   minimising   a  functional $f+ g$ 
over a  Hilbert space  $(\mathcal X,\|\cdot\|_{\mathcal X})$
is the proximal gradient algorithm  \cite{beck2009fast}. 
It is shown in \cite{beck2009fast} that, 
if  $f$ is Fr\'{e}chet  differentiable, strongly convex 
and smooth 
(i.e., the derivative of $f$   is  Lipschitz continuous)
with respect to the norm $\|\cdot\|_{\mathcal X}$,
then the non-smooth term 
$g$ can be handled by the  proximal operator,
and a  proximal gradient method converges exponentially to the minimiser.

However, incorporating measure-valued controls in \eqref{problem} poses significant challenges when adapting the proximal gradient algorithm to the present setting. Firstly, the space $\mathcal A_{\mathfrak C}$ of admissible controls is not a Hilbert space, rendering the convergence results in \cite{beck2009fast} inapplicable. Secondly, computing the gradient of $J^0$ necessitates introducing a suitable notation of derivatives over $\mathcal {P}(A)$.
Moreover, 
to establish strong convexity and smoothness of $J^0$, it is essential to assess its regularity with respect to distinct controls $\pi$ and $\pi'$.
 Since $\mathcal {P}(A)$ can be equipped with multiple metrics (such as the total variation metric or Wasserstein metrics), it remains unclear which metric is the most  natural choice to quantify the regularity of $J^0$.
Lastly, the convergence analysis in \cite{beck2009fast} overlooks the potential regularisation effect of $h$. While \cite{beck2009fast} requires $J^0:\mathcal{A}_{\mathfrak C}\to \R $ to be strongly convex for  exponential convergence  despite the existence of the regulariser $h$, it has been observed in \cite{mei2020global, vsivska2020gradient, kerimkulov2023fisher} that selecting $h$ as the relative entropy   accelerates the convergence of gradient descent algorithms.

\subsection*{Mirror descent algorithm}

In this work, 
we adopt an alternative approach and introduce a mirror descent algorithm for \eqref{problem}.
Our algorithm can be viewed as a dynamic  extension of  \cite{aubin2022mirror},  which 
 builds on optimisation results in $\mathbb{R}^d$~\cite{bauschke2017descent,lu2018relatively}  and studies  mirror descent algorithms for minimising a cost $f+g$ over spaces of measures. 
The key observations therein are  
(i) the Bregman divergence associated with $g$ provides a natural {notion to express regularity of involved functions}; and 
(ii) the  relative smoothness and convexity of $f$ with respect   to    the Bregman divergence of $g$ are 
sufficient   to ensure the convergence of mirror descent.
We extend these insights to the current dynamic setting.

To derive the mirror descent algorithm for \eqref{problem}, define the Hamiltonian 
$H^{\tau}:[0,T]\times\R^d\times\R^d\times\R^{d\times d'}\times \mathfrak C\rightarrow \R $ by
\begin{equation}\label{eq: hamiltonian}
\begin{split}
H_t^{\tau}(x,y,z,m)\coloneqq b_t(x,m)^\top y +\text{tr}(\sigma_t^\top (x,m)z)+f_t(x,m)+\tau\,h(m)\,.
\end{split}
\end{equation}
Under appropriate conditions on the   coefficients, we prove in Lemma \ref{lem lemma form siska szpruch}  
that for all $\pi,\pi'\in \mathcal{A}_{\mathfrak C}$, 
\begin{align}
\label{eq:first_variation_F}
\begin{split}
d^+  J^0 (\pi)(\pi'-\pi)
&\coloneqq \lim_{\eps\searrow 0}\frac{  J^0(\pi+\eps(\pi'-\pi))-J^0(\pi)}{\eps}
=\E\int_0^T\int\frac{\delta H^{0}_s}{\delta m}(\Theta_s(\pi),\pi_s,a)(\pi'_s-\pi_s)(da)\,ds\,,
\end{split}
\end{align}
where
$\Theta(\pi)=(X(\pi), Y(\pi), Z(\pi))$  with $X(\pi)$ being the state process satisfying \eqref{sde}
and 
$ ( Y(\pi), Z(\pi))$ being the adjoint processes satisfying  
the following backward stochastic differential equation (BSDE):
\begin{equation}
\label{eq: adjoint equation}
\left\{
\begin{aligned}
dY_s(\pi)&=-(D_x H^0_s)(X_s(\pi),Y_s(\pi),Z_s(\pi),\pi_s)\,ds+Z_s(\pi)\,dW_s\,,\,\,s\in[0,T]\,,\\
Y_T(\pi)&=(D_xg)(X_T(\pi))\,, 
\end{aligned}
\right.
\end{equation}
and 
for all $\phi:\mathfrak C\to \R$, we denote by
$\frac{\delta \phi}{\delta m}:\mathfrak C\times A\to \R$
the flat derivative of $\phi$ (see Definition \ref{def flat derivative}).
Equation \eqref{eq:first_variation_F}  indicates  that 
$ \frac{\delta H^{0}_\cdot}{\delta m}(\Theta_\cdot(\pi),\pi_\cdot,\cdot)$
characterises the first variation of   $J^0$ at $\pi$. 

Now assume that $h$ admits a flat derivative $ \frac{\delta h}{\delta m}$ on $\mathfrak C$,
and consider the  Bregman divergence associated with $h$
such that for all  $m, m' \in \mathfrak C$, 
\begin{equation}
\label{eq:D_h_introd}
D_h(m'|m) = h(m')-h(m) - \int \frac{\delta h}{\delta m }(m,a) (m'-m)(da) \,.
\end{equation}
The   mirror descent algorithm for  \eqref{problem} is given as follows.

\begin{algorithm}[H]
\label{alg mmsa}
\DontPrintSemicolon
\SetAlgoLined

\KwInput{initial guess  $\pi^0  \in \mathcal{A}_{\mathfrak C}$ and  the parameter  $\lambda > 0$.
}

\For{$n =0, 1, 2,\ldots$}
{
{Given  $\pi^{n} \in \mathcal{A}_{\mathfrak C}$,  solve the following  equations:
for all $t\in [0,T]$, 
\begin{equation*}
\label{eq mirror FBSDE}
\left\{
\begin{aligned}
dX_t(\pi^{n})&=b_t(X_t(\pi^{n}),\pi^{n}_t)\,dt+\sigma_t(X_t(\pi^{n}),\pi^{n}_t)\,dW_t\,,
\; X_0(\pi^{n}) = x_0; \\
dY_t(\pi^{n})&=-(D_xH^0_t)(\Theta_t(\pi^{n}),\pi^{n}_t)\,dt+Z_t(\pi^{n})\,dW_t\, ,
\\
\quad Y_T(\pi^{n})&=(D_xg)(X_T(\pi^{n}))\,.
\end{aligned}
\right.
\end{equation*}
} 

{
Update the control  $\pi^{n+1}\in \mathcal{A}_{\mathfrak C} $ such that  for all $t\in [0,T]$, 
\begin{equation}\label{eq update the control bregman msa}
\pi^{n+1}_t \in \argmin_{m\in \mathfrak C}\int\frac{\delta H^{\tau}_t}{\delta m}(\Theta_t(\pi^{n}), \pi^{n}_t, a)(m-\pi^{n}_t)(da) + \lambda \,D_h(m|\pi^{n}_t) \,.
\end{equation}
}  
}
\caption{$h$-Bregman Mirror Descent Algorithm}
\end{algorithm}

By   \eqref{eq:F_H} and \eqref{eq:first_variation_F},
the minimisation step \eqref{eq update the control bregman msa}
can be formally interpreted 
is a pointwise representation of 
the mirror descent update 
\[
\pi^{n+1} 
\in \argmin_{\pi \in \mathcal{A}_{\mathfrak C}}
\left(
d^+ J^\tau (\pi^n)(\pi-\pi^n)
+\lambda D_{\mathcal H}(\pi|\pi^n)\right)\,,
\]
where $d^+ J^\tau (\pi^n) $ is the first variation of $J^\tau$ at $\pi^n$, 
and $D_{\mathcal H}(\pi|\pi^n)
=\E\int_0^T D_h(\pi_s|\pi^n_s) \, ds$ is the Bregman divergence of $\mathcal H$.
As a result, \eqref{eq update the control bregman msa}
optimises   the first-order approximation of $J^\tau$ around $\pi^n$,
and    
the Bregman divergence $\pi\mapsto \lambda D_{\mathcal H}(\pi|\pi^n)$
ensures that the optimisation is over the part of the domain where such an approximation is sufficiently accurate.
Similar ideas have been employed in the design of  gradient-based  algorithms in \cite{schulman2017proximal,sethi2022modified} for problems with finite-dimensional action spaces, and in \cite{szpruch2022optimal} for linear-quadratic problems with measure-valued controls.

\subsection*{Our contributions}

This paper identifies conditions under which
Algorithm \ref{alg mmsa}  converges   to an  optimal control of \eqref{problem}
and characterises its convergence rate. 

\begin{itemize}
\item
For a general   regularisation function $h$, we identify    regularity conditions 
on the coefficients  
such that  
Algorithm \ref{alg mmsa} is well-defined
and the   loss 
$J^\tau: \mathcal{A}_{\mathfrak C}\to \R$ is relative smooth with respect to the divergence 
$(\pi',\pi)\mapsto \E\int_0^T D_h(\pi'_s|\pi_s)   \, d s $ (Theorem \ref{th estimate for difference of J}). 
The coefficient  regularity in the measure component  
is characterised 
using  the Bregman divergence $D_h $   in \eqref{eq:D_h_introd},
and  the diffusion coefficient $\sigma$
is allowed 
to be degenerate, state-dependent and controlled.
\item
Leveraging the relative smoothness, we prove  that 
$\{J^\tau(\pi^{n})\}_{n\in \N}$ is decreasing  
(Theorem \ref{thm energy dissipation mirr}). 
If we further assume that the unregularised Hamiltonian $H^0$ in~\eqref{eq: hamiltonian} and the terminal cost $g$
are convex in  $x$ and  $m$, 
we prove that Algorithm \ref{alg mmsa} converges linearly when $\tau=0$ and exponentially when $\tau>0$ (Theorem \ref{thr convergence of modified MSA for convex case}). 
Note that in comparison with \cite{sethi2022modified}, the exponential convergence is achieved by leveraging the regularisation effect of $h$, without imposing strong convexity on the unregularised cost $J^0$.
To the best of our knowledge, this is the first work on the convergence of mirror descent   algorithms  for  continuous-time   control problems with measure-valued control processes and general regularisation functions.  

\item
We demonstrate the applicability of the convergence results with concrete examples of $h$, 
including  the relative entropy, 
the $\chi^2$-divergence, and  the entropic   Wasserstein cost. 
\end{itemize}

Here we  highlight that analysing mirror descent (i.e., Algorithm \ref{alg mmsa}) in the present dynamic setting introduces additional challenges beyond those encountered in static optimisation problems 
\cite{  aubin2022mirror}.
Specifically, as the coefficient regularity is measured using the Bregman divergence $D_h$, which is neither a metric on $\mathcal P(A)$ nor equivalent to commonly used metrics, special care is required for the well-posedness of Algorithm \ref{alg mmsa} and the differentiability of $J^0$ (see the discussion above Lemma \ref{lem regularity of the flat deriv of hamiltonian}). Moreover, establishing the relative smoothness of $J^0$ requires fine estimates on the solutions of \eqref{eq: adjoint equation}, which employ the theory of  bounded mean oscillation (BMO) martingales (see Remark \ref{rmk:bmo} for details).

We also emphasize that this work analyzes the convergence rate of the discrete-time mirror descent update \eqref{eq update the control bregman msa}, unlike \cite{vsivska2020gradient}, which proves the exponential convergence of a continuous-time Wasserstein gradient flow for the control problem~\eqref{sde}-\eqref{problem} with a relative entropy regularizer. 
Analyzing the discrete-time gradient updates necessitates novel techniques to address time discretization errors, 
which are not covered in the continuous-time gradient flow analysis 
 in \cite{vsivska2020gradient}.
Moreover, our convergence analysis extends the Pontryagin optimality principle and BSDE analysis from \cite{vsivska2020gradient} by 
measuring coefficient regularity 
with   a general Bregman divergence, rather than the Wasserstein metric used in \cite{vsivska2020gradient}. This approach offers greater flexibility in accommodating general model coefficients and general regularizers for the control problem (see Section \ref{sec:example_h} for more details).

\subsection*{Literature review}

Two primary approaches to solve a (stochastic) control problem are the dynamic programming principle, leading to    a nonlinear Hamilton-Jacobi-Bellman (HJB)   partial differential equation  (PDE)~\cite{krylov2008controlled}, and Pontryagin's optimality principle~\cite{boltyanskii1960theory, pontryagin1987mathematical}, leading to   a  coupled forward-backward stochastic differential equation  (FBSDE). Classical numerical methods involve first discretising these nonlinear equations and then solving the resulting discretised equations using iterative methods;
see e.g., \cite{dong2007rate, gyongy2009finite} for the PDE approach 
and  \cite{douglas1996numerical, milstein2007discretization, delarue2006forward, 
ma2008numerical, han2020convergence, ji2020three, reisinger2020posteriori} for the FBSDE approach.

Recently, there has been a growing interest in   designing   iterative algorithms to linearise HJB PDEs  or to decouple FBSDEs before discretisation. 
These algorithms have the 
 advantage 
 of solving linear equations at each step, enabling the use of efficient mesh-free algorithms,  particularly in   high dimensional settings \cite{han2017deep, kalise2018polynomial, sirignano2018dgm}.
For instance, policy iteration has been utilised to linearise HJB PDEs \cite{kerimkulov2020exponential, ito2021neural, huang2022convergence, tran2025policy},
which converges  exponentially   for drift-controlled problems \cite{kerimkulov2020exponential} and super-exponentially   for  control problems
in bounded domains
 \cite{ito2021neural}. 
Additionally, the method of successive approximation and its variants have been proposed to decouple FBSDEs \cite{li2018maximum, kerimkulov2021modified, ji2022modified, sethi2022modified},
which  converge linearly for convex losses \cite{kerimkulov2021modified} and exponentially for strongly convex losses \cite{sethi2022modified}. Note that both policy iteration and the method of successive approximation require the exact minimisation of the Hamiltonian over the entire action space at each iteration, which can be computationally expensive for high dimensional action spaces.
 
Gradient-based methods   enhance algorithm efficiency, especially for high dimensional action spaces, by updating controls using the gradient of the Hamiltonian \cite{sutton2018reinforcement, jia2022policy}.
Most existing works on the convergence of gradient-based algorithms  focus on discrete-time control problems (see \cite{mei2020global, mei2021leveraging, lan2023policy,  kerimkulov2023fisher} and references therein).
For continuous-time control problems with continuous state and action spaces, theoretical studies on the convergence of gradient-based algorithms are   limited.
For continuous-time gradient flows,
 the exponential convergence is proved in 
\cite{vsivska2020gradient} for open-loop control problems with  sufficiently convex losses,
and in 
\cite{sethi2024entropy}  for drift-control problems with stochastic policies.
For discrete update,
the exponential convergence of   policy gradient methods is proved in   \cite{giegrich2022convergence} for linear-quadratic problems with possibly nonconvex costs,
 and in \cite{reisinger2023linear} for 
drift-control problems with finite dimensional action spaces.

\subsection*{Organisation of the paper}
We now conclude Section~\ref{sec:intro} which forms the introduction.
In Section~\ref{sec main}, 
we present the assumptions, main results, and examples of regularisers along with associated Bregman divergences applicable to the paper's setting.
 Section~\ref{sec proofs} collects the  proofs of main results.
Finally,~\ref{appendix:bregman} recalls some known properties of Bregman divergence and~\ref{sec:proof_of_dir_der_of_J0} proves Lemma~\ref{lem lemma form siska szpruch}, the characterisation of the first variation of $J^0$.

\section{Main results}
\label{sec main}
This section 
summarises the model assumptions and presents the main results. 
Throughout this paper, 
let $T\in (0,\infty)$, 
let 
$(\Omega, \mathcal{F}, \mathbb{F},  \Prob)$ be  a filtered
probability space satisfying the usual conditions, 
on which a $d'$-dimensional standard Brownian motion 
$W=(W_t)_{t\in [0,T]}$ is defined. 
Let 
$(A,\rho_A) $ be  a separable metric space, 
and  let $\mathcal P(A)$ be  the space of probability measures on $A$ 
equipped   with 
the topology of the weak convergence of measures
and the associated Borel $\sigma$-algebra.

\subsection{Regulariser $h$ and the associated Bregman divergence}
Let $\mathfrak C $ be a    convex and   measurable subset  of $ \mathcal P(A)$,
and 
$h: \mathfrak C\to \mathbb{R} $ be  a convex   function,  i.e.,
$h(\varepsilon m + (1-\varepsilon)m')\le \varepsilon h(m)+(1-\varepsilon)h(m')$
for all $\varepsilon \in [0,1]$ and $m,m'\in  \mathfrak C$. 
We require  the function 
$h  $ to have a flat derivative 
$\frac{\delta  h}{\delta m}:  \mathfrak C\times A\to \mathbb{R}$, whose precise definition is given as follows.

\begin{definition}[Flat derivative on $\mathfrak C \subseteq  \mathcal{P}(A)$]
\label{def flat derivative}
We say a   function $f:\mathfrak C\to \R^d $ has  a flat derivative, 
if there exists a measurable function  $\frac{\delta f }{\delta m} :\mathfrak C\times A \to \mathbb{R}^d $,
called the flat  derivative of $f$, 
such that for all $m,m'\in\mathfrak C$, $\int \big| \frac{\delta f }{\delta m} (m, a) \big| m'(da) <\infty$,
\begin{equation}\label{eq in def of flat der}
\lim_{\varepsilon \searrow 0 }\frac{f(m^\varepsilon)- f(m) }{\varepsilon} =
\int  \frac{\delta f }{\delta m} (m, a) ( m'-m) (da) \quad \textnormal{with $m^\varepsilon =m + \varepsilon (m' - m)$\,,}
\end{equation}
and $\int  \frac{\delta f }{\delta m} (m, a) m(da) =0$.

Given a measurable space  $\mathcal Y$, 
we say a   function $f:\mathcal Y \times \mathfrak C\to \R^d $ has a   flat derivative
with respect to $m$ 
on $\mathfrak C$, 
if there exists a measurable function  $\frac{\delta f }{\delta m}: \mathcal Y \times  \mathfrak C\times A \to \mathbb{R}^d $ such that  $\frac{\delta f}{\delta m} (y,\cdot)$ is the flat derivative of $f(y,\cdot)$ for all $y\in \mathcal Y$.
\end{definition}

\begin{remark}
\label{rmk:FTC}
One can show that 
if  $f:\mathfrak C\to \R^d $ admits a flat derivative $\frac{\delta f }{\delta m}$,
then for all $m,m'\in \mathfrak C$, 
the function 
$[0,1]\ni \varepsilon \mapsto f(m^\varepsilon) $ is
continuous on $[0,1]$ and 
differentiable on $(0,1)$ with derivative 
$\frac{d}{d \varepsilon}f(m^\varepsilon) = \int \frac{\delta f }{\delta m} (m^\varepsilon , a) ( m'-m) (da) $
(see \cite[ Theorem 2.3]{jourdain2021central} and
\cite[Lemma 4.1]{guo2023towards}).
Hence by the fundamental theorem of calculus, 
$f(m')-f(m)=\int_0^1 \int \frac{\delta f }{\delta m} (m^\varepsilon , a) ( m'-m) (da)d \varepsilon$
provided that $\varepsilon \mapsto \int  \frac{\delta f }{\delta m} (m^\varepsilon , a) ( m'-m) (da)$ is integrable.
\end{remark}

Given the regulariser  $h$ and its flat derivative  
$\frac{\delta  h}{\delta m}$, 
define   the Bregman divergence 
$D_h(\cdot|\cdot): \mathfrak C\times \mathfrak C\to [0,\infty)$  by
\begin{equation}
\label{eq:bregman_def}
D_h(m'|m) = h(m')-h(m) - \int \frac{\delta h}{\delta m }(m,a) (m'-m)(da) \,,
\end{equation}
and 
for a  fixed   $\nu\in \mathfrak C$, 
define the set    $\mathcal A_{\mathfrak C}$ of   the admissible controls by
\begin{equation} 
\label{eq:admissible_AC}
\begin{split}
\mathcal{A_{\mathfrak C}}&\coloneqq \left\{\pi:\Omega \times [0,T] \rightarrow \mathfrak C 
\,\middle\vert\, 
\begin{aligned}
&\textnormal{$\pi$ is progressively  measurable,}
\\
&\textnormal{$\mathbb E\int_0^T  | h(\pi_t ) |\, d t <\infty$ and $\mathbb E\int_0^T  D_h(\pi_t|\nu) \, d t <\infty $. }
\end{aligned}
\right\}\,.
\end{split}	
\end{equation}
Here the condition   $\mathbb E\int_0^T  D_h(\pi_t|\nu) \, d t <\infty$ 
ensures for each control $\pi\in \mathcal A_{\mathfrak C}$,   the   state process is   square integrable (see Proposition \ref{prop existence and uniqueness of the solution}), 
while the condition $\mathbb E\int_0^T  | h(\pi_t ) |\, d t <\infty$ ensures that the regularised cost $J^\tau(\pi)$ is finite for all $\tau>0$. 
Note that $\mathcal{A_{\mathfrak C}}$ is  convex due to the  convexity of $h$ and   $\pi\mapsto D_h(\pi|\nu)$ (see Lemma \ref{lemma:properties-of-h-bregman} Item \ref{item:D_convex}).

In the sequel, we   
work with a general regularisation function  $h$,
quantify the coefficient  regularity   using the Bregman divergence $D_h $,
and  analyse the convergence of Algorithm \ref{alg mmsa}. 
Concrete examples of $h$, including   relative entropy, $\chi^2$-divergence and entropic optimal transport,  and the corresponding Bregman divergence $D_h$
will be provided
in Section \ref{sec:example_h}.

\subsection{Well-posedness of the iterates}

We start by  imposing suitable  regularity conditions on the coefficients 
for the well-posedness of 
the state dynamics \eqref{sde} and adjoint dynamics \eqref{eq: adjoint equation}. 

\begin{assumption}[Regularity of $b$ and $\sigma$]
\label{assumption controlled SDE for modified MSA}
The measurable functions 
$b:[0,T]\times\R^d\times \mathcal{P}(A)\to \R^d$ and 
$\sigma:[0,T]\times\R^d\times \mathcal{P}(A)
\rightarrow\R^{d\times d'}$
are   differentiable in $x$ and 
have     flat derivatives    $\frac{\delta b}{\delta m} 
$ and  $\frac{\delta \sigma}{\delta m}
$, respectively, 
with respect to $m$ on $\mathfrak C$.
There exists  $\nu_0\in \mathfrak C$ 
such that $\int_0^T |b_t(0,\nu_0)|\, dt<\infty$
and  $\int_0^T |\sigma_t(0,\nu_0)|^2\, dt<\infty$,
and there exists $K\ge 0$ 
such that for all  $  t\in[0,T]$, $ x, x'\in\R^d$ and $  m, m'\in \mathfrak C$,
\begin{equation*}
\begin{split}
|b_t(x,m)-b_t(x',m')|^2 +|\sigma_t(x,m)-\sigma_t(x',m')|^2 & \le K\left( |x-x'|^2+D_h(m|m')\right)\,.
\end{split}	
\end{equation*}
\end{assumption}

\begin{assumption}[Regularity of   $f$ and $g$]
\label{assum:regularity_f_g}
The measurable function 
$f: [0,T]\times \mathbb R^d \times \mathcal{P}(A) \to \mathbb R$ 
is differentiable in $x$
and has  a   flat derivative   $\frac{\delta f}{\delta m}
$  
with respect to $m$ on $\mathfrak C$.
The function $g:  \mathbb R^d\to \mathbb R$ is differentiable.
There exists  $\nu_0\in \mathfrak C$ 
such that $\int_0^T |f_t(0,\nu_0)|\, dt<\infty$,
and 
there exists   $K\ge0$ such that  
for all $t\in [0,T]$,
$  x\in\R^d$ and $ m\in \mathfrak C$,
$|(D_x g)(x)|+|(D_x f_t)(x,m)|\le K
$.
\end{assumption}

Under   Assumptions~\ref{assumption controlled SDE for modified MSA} and \ref{assum:regularity_f_g}, 
for each $\pi\in \mathcal{A_{\mathfrak C}}$,
the equations~\eqref{sde}
and~\eqref{eq: adjoint equation}
admit unique solutions, as shown in the following proposition. 

\begin{proposition}\label{prop existence and uniqueness of the solution}
Suppose  Assumptions~\ref{assumption controlled SDE for modified MSA} and \ref{assum:regularity_f_g} hold.
For each  $\pi\in\mathcal{A}_{\mathfrak C}$,
\eqref{sde} 
has a unique strong solution $X(\pi)\in L^2(\Omega ; C([0,T];\mathbb R^d))$
and  \eqref{eq: adjoint equation} has  a unique   solution   $(Y(\pi),Z(\pi)) \in  L^2(\Omega;  C([0,T];\mathbb R^d)) \times L^2(\Omega \times [0,T ]; \mathbb R^{d\times d'})$.
\end{proposition}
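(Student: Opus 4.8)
The plan is to exploit that the forward--backward system \eqref{sde}--\eqref{eq: adjoint equation} is \emph{decoupled} for a fixed control: the forward equation \eqref{sde} does not involve $(Y,Z)$, so one can solve it first, and then substitute the resulting $X(\pi)$ into \eqref{eq: adjoint equation}, which becomes a standard Lipschitz BSDE. Both steps then reduce to classical well-posedness theorems---the existence and uniqueness theory for SDEs with random, uniformly Lipschitz coefficients, and the Pardoux--Peng theorem for Lipschitz BSDEs---so the work lies entirely in verifying that Assumptions~\ref{assumption controlled SDE for modified MSA}--\ref{assum:regularity_f_g} and the integrability built into \eqref{eq:admissible_AC} supply the needed hypotheses: progressive measurability, uniform-in-$(\omega,s)$ Lipschitz continuity in the state, and the appropriate integrability of the coefficients evaluated at the origin.

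For the forward equation I would fix $\pi\in\mathcal A_{\mathfrak C}$ and freeze it, writing $\tilde b^\pi_s(\omega,x):=b_s(x,\pi_s(\omega))$ and $\tilde\sigma^\pi_s(\omega,x):=\sigma_s(x,\pi_s(\omega))$. These are jointly measurable and $\mathbb{F}$-progressively measurable in $(\omega,s)$ since $\pi$ is progressively measurable and $b,\sigma$ are jointly measurable. Choosing $m=m'$ in Assumption~\ref{assumption controlled SDE for modified MSA} and using $D_h(m|m)=0$ shows that $\tilde b^\pi_s(\omega,\cdot)$ and $\tilde\sigma^\pi_s(\omega,\cdot)$ are $\sqrt K$-Lipschitz uniformly in $(\omega,s)$; choosing instead $x=x'=0$ gives $|\tilde b^\pi_s(\omega,0)-b_s(0,\nu_0)|^2\le K\,D_h(\pi_s(\omega)|\nu_0)$ and the analogous bound for $\sigma$. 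Together with $\int_0^T|b_s(0,\nu_0)|\,ds<\infty$, $\int_0^T|\sigma_s(0,\nu_0)|^2\,ds<\infty$, the Cauchy--Schwarz inequality and $\mathbb{E}\int_0^T D_h(\pi_s|\nu_0)\,ds<\infty$ (the last being the second integrability requirement in \eqref{eq:admissible_AC}, with the reference point $\nu$ there identified with the $\nu_0$ of Assumption~\ref{assumption controlled SDE for modified MSA}; see also the properties of $D_h$ in \ref{appendix:bregman}), this yields $\mathbb{E}\int_0^T\big(|\tilde b^\pi_s(\cdot,0)|+|\tilde\sigma^\pi_s(\cdot,0)|^2\big)\,ds<\infty$. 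The usual Picard iteration and contraction argument in the Banach space $L^2(\Omega;C([0,T];\mathbb{R}^d))$, based on the Burkholder--Davis--Gundy and Gr\"onwall inequalities, then produces the unique strong solution $X(\pi)\in L^2(\Omega;C([0,T];\mathbb{R}^d))$ together with the bound $\mathbb{E}[\sup_{s\le T}|X_s(\pi)|^2]<\infty$.

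For the backward equation I would then regard $X(\pi)$ as a given input, set the terminal value $\xi:=(D_xg)(X_T(\pi))$, and define the driver $\Psi_s(\omega,y,z):=-(D_xH^0_s)(X_s(\pi)(\omega),y,z,\pi_s(\omega))$. Differentiating \eqref{eq: hamiltonian} in $x$ gives $(D_xH^0_t)(x,y,z,m)=(D_xb_t(x,m))^\top y+D_x\operatorname{tr}(\sigma_t^\top(x,m)z)+(D_xf_t)(x,m)$; the uniform $x$-Lipschitz bounds just established force $|D_xb_t(x,m)|\le\sqrt K$ and $|D_x\sigma_t(x,m)|\le\sqrt K$, so $\Psi$ is affine, hence Lipschitz, in $(y,z)$ with a constant depending only on $K,d,d'$, uniformly in $(\omega,s)$, and $(\omega,s)\mapsto\Psi_s(\omega,y,z)$ is progressively measurable for each fixed $(y,z)$. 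Assumption~\ref{assum:regularity_f_g} then gives $|\xi|\le K$, hence $\xi\in L^2(\Omega,\mathcal F_T;\mathbb{R}^d)$, and $|\Psi_s(\cdot,0,0)|=|(D_xf_s)(X_s(\pi),\pi_s)|\le K$, hence $\Psi(\cdot,0,0)\in L^2(\Omega\times[0,T];\mathbb{R}^d)$. The Pardoux--Peng theorem for Lipschitz BSDEs then delivers the unique solution $(Y(\pi),Z(\pi))\in L^2(\Omega;C([0,T];\mathbb{R}^d))\times L^2(\Omega\times[0,T];\mathbb{R}^{d\times d'})$.

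I expect the only genuinely delicate point to be the integrability, in $(\omega,s)$, of the frozen forward coefficients at $x=0$: this is where the comparatively weak conditions defining $\mathcal A_{\mathfrak C}$---which directly control only $|h(\pi_s)|$ and $D_h(\pi_s|\nu)$---must be converted into control of $D_h(\pi_s|\nu_0)$ for the base point $\nu_0$ of Assumption~\ref{assumption controlled SDE for modified MSA}, using the book-keeping above and the basic properties of the Bregman divergence collected in \ref{appendix:bregman}. Once this square-integrability is secured, the remaining steps are routine applications of classical SDE and BSDE theory.
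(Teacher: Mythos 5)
Your proposal is correct and takes essentially the same route as the paper, which omits the proof precisely because it reduces to standard well-posedness theory for Lipschitz SDEs with random coefficients and Lipschitz (here affine, with $|D_xb|,|D_x\sigma|\le\sqrt K$ and bounded $D_xf$, $D_xg$) BSDEs, the square-integrability of $X(\pi)$ coming from the admissibility condition $\mathbb E\int_0^T D_h(\pi_t|\nu)\,dt<\infty$. One cosmetic remark: you need not identify $\nu$ with the $\nu_0$ of Assumption~\ref{assumption controlled SDE for modified MSA}, since that assumption holds for arbitrary pairs of measures, giving $|b_s(0,\pi_s)|\le |b_s(0,\nu_0)|+\sqrt{K\,D_h(\nu|\nu_0)}+\sqrt{K\,D_h(\pi_s|\nu)}$ with $D_h(\nu|\nu_0)$ a finite constant, which (via Cauchy--Schwarz) also yields the slightly stronger bound $\mathbb E\big[\big(\int_0^T|b_s(0,\pi_s)|\,ds\big)^2\big]<\infty$ that the standard SDE theorem actually requires, rather than only $\mathbb E\int_0^T|b_s(0,\pi_s)|\,ds<\infty$.
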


The proof of Proposition~\ref{prop existence and uniqueness of the solution} 
follows directly from standard well-posedness results of SDEs and BSDEs (see e.g., \cite{zhang2017backward}) and hence is omitted.
In particular,   the square integrability of $X(\pi)$ follows from the   condition $\mathbb E\int_0^T  D_h(\pi_t|\nu) \, d t < \infty$ of $\pi\in\mathcal{A}_{\mathfrak C}$.

We further assume that 
the mirror descent iterates $\{\pi^n\}_{n\in \N\cup\{0\}}$ in Algorithm \ref{alg mmsa}
are well-defined.

\begin{assumption}
\label{ass iterate_wd}
Let $\pi^0\in  \mathcal{A}_{\mathfrak C}$ such that 
for all  $\lambda>0$,  
the iterates  $\{\pi^n\}_{n\in \N}$ in Algorithm \ref{alg mmsa}
exist, belong to $ \mathcal{A}_{\mathfrak C}$ and satisfy
$\mathbb E\int_0^T D_h(\pi^{n+1}_t|\pi^n_t) \, d t < \infty$ for all $n\in \N\cup\{0\}$.
\end{assumption}

Assumption \ref{ass iterate_wd} assumes that the pointwise update   \eqref{eq update the control bregman msa} maintains the admissible control set $\mathcal{A}_{\mathfrak C}$. The condition that   $\mathbb E\int_0^T D_h(\pi^{n+1}_t|\pi^n_t) d t<\infty$ allows for using   the integrated Bregman divergence as a Lyapunov function for the convergence  analysis.
In general, 
Assumption \ref{ass iterate_wd} 
needs to be verified 
depending  on the specific choices of $h$.
We refer the reader to 
Section \ref{sec:example_h} for more details.

\subsection{Convergence of the iterates}

We     introduce additional  regularity assumptions  for the coefficients,
which   will be used  to establish the convergence   of Algorithm \ref{alg mmsa}.

\begin{assumption}[Regularity of  spatial derivatives]
\label{assumption:spatial_derivative}

There exists $K\ge 0$ 
such that 
for all  $\phi \in \{b,f, \sigma \}$ and 
for all  $t\in[0,T]$, $ x, x'\in\R^d$ and $  m, m'\in \mathfrak C$,
\begin{equation*}
\begin{split}
|(D_x \phi_t)(x,m)-(D_x  \phi_t)(x',m')|^2 & \le K\left( |x-x'|^2+D_h(m|m')\right)\,,\,\,\quad
|(D_xg)(x)-(D_xg)(x')|  \le K |x - x'|\,.
\end{split}	
\end{equation*}
\end{assumption}

\begin{assumption}[Regularity of  measure derivatives]\label{assum:regularity_flat_derivative}
For each  $\phi \in \{b,\sigma, f \}$,
the function  
$\frac{\delta \phi}{\delta m}$ 
is   differentiable in $x$,
has  a flat derivative $\frac{\delta^2 \phi}{\delta m^2}$ 
with respect to $m$ on $\mathfrak C$,
and satisfies for some  $K\ge 0$   that
for all 
$  t\in[0,T]$, $  x\in\R^d$ and $  m,m',m''\in\mathfrak C$,
\begin{equation*}
\begin{split}
\left|\int \frac{\delta \phi_t}{\delta m}(x,m'',a)(m-m')(da)\right|^2 + \left|\int \left(D_x\frac{\delta \phi_t}{\delta m}\right)(x,m'',a)(m-m')(da)\right|^2 & \le K D_h(m|m')\,.
\end{split}
\end{equation*}
Moreover, 
for each $\phi\in \{b,f\}$, there exists   $K\ge 0$ such that 
for all 
$  t\in[0,T]$, $  x\in\R^d$ and $ m,m', m''\in\mathfrak C$,  
\begin{equation*}
\left|\int\int\frac{\delta^2 \phi_t}{\delta m^2}(x,m'',a,a')(m-m')(da')(m-m')(da)\right|\le KD_h(m|m')\,.
\end{equation*}
For all   $  t\in[0,T]$, $  x\in\R^d$,  $ m\in\mathfrak C$ and $  a, a' \in A$, 
$\frac{\delta^2 \sigma_t}{\delta m^2}(x,m,a,a') = 0$. 
\end{assumption}

\begin{remark}
Assumption \ref{assum:regularity_flat_derivative}
requires that the derivative $\frac{\delta \sigma}{\delta m}$ is independent of $m$.
This condition is satisfied  when the diffusion coefficient is uncontrolled, or when it depends linearly on the measure-valued control. The latter scenario commonly arises in optimal resource allocation problems, where the control process represents the proportion of resources allocated to various projects, and the state process models the total amount of resources, which evolves according to the returns generated by the invested projects, depending linearly on the allocation strategy.

The condition on  $\frac{\delta \sigma}{\delta m}$  is used to prove  Lemma \ref{lem regularity of the flat deriv of hamiltonian}, which  is crucial for   establishing  the relative smoothness result in  Theorem \ref{th estimate for difference of J}.
Specifically, 
in  \eqref{eq:2nd_H}, 
we aim to show 
(assuming 
for simplicity that $d=d'=1$) that for all $\pi,\pi'\in \mathcal{A}_\mathfrak{C} $,
\begin{align}
\label{eq:Z_estimate_divergence}
\int_0^T\mathbb E\left[ 
\int\int  \frac{\delta^2 \sigma_s}{\delta m^2}(X_s(\pi),\pi^{\varepsilon',\varepsilon}_s,a,a')Z_s(\pi)(\pi'_s-\pi_s)(da')(\pi'_s-\pi_s)(da) 
\right] d s\le C \mathbb E\int_0^T D_h(\pi'_s|\pi_s)d s\,.    \end{align}
The BMO estimate of $Z$ in Lemma  \ref{lem Y Z bounded} and  the upper bound in Lemma  \ref{lemma:L2_BMO_L1} suggest showing 
\begin{align*}
\mathbb E\left[ \left(\int_0^T
\left|\int\int  \frac{\delta^2 \sigma_s}{\delta m^2}(X_s(\pi),\pi^{\varepsilon',\varepsilon}_s,a,a') (\pi'_s-\pi_s)(da')(\pi'_s-\pi_s)(da) \right|^2
 d s\right)^{1/2}\right]\le C \mathbb E\int_0^T D_h(\pi'_s|\pi_s)d s\,.  \end{align*}
However, this would require bounding the $L^2$-norm in time by an $L^1$-norm, which is generally infeasible unless the integrand is  zero.

The above discussion  reflects  a fundamental difficulty in analyzing gradient-based algorithms  for continuous-time control problems with nonlinearly controlled diffusion coefficients.
Existing works focus on cases where the diffusion coefficient is either uncontrolled (see, e.g., \cite{reisinger2023linear, sethi2024entropy}) or linearly controlled (see \cite{vsivska2020gradient, giegrich2022convergence, davey2025convergence, plank2025policy}). It remains an interesting open problem to identify sufficient conditions for designing convergent  gradient-descent algorithms when diffusion coefficients are nonlinearly controlled.
\end{remark}

Using Assumptions~\ref{assumption controlled SDE for modified MSA}, \ref{assum:regularity_f_g}, \ref{assumption:spatial_derivative} and  \ref{assum:regularity_flat_derivative},
we  prove  that    the performance difference of any two admissible  controls can be  upper bounded 
by a   first order term  and their Bregman divergence. 
By interpreting 
$\frac{\delta H^{\tau}_\cdot}{\delta m}(\Theta_\cdot(\pi),\pi_\cdot,\cdot)$  
as the derivative of $\pi\to J^\tau (\pi)$,   
the theorem shows that the cost functional is relatively smooth with respect to the Bregman divergence (see e.g., \cite{aubin2022mirror}).

\begin{theorem}[Relative smoothness] 
\label{th estimate for difference of J}
Suppose  Assumptions~\ref{assumption controlled SDE for modified MSA}, \ref{assum:regularity_f_g}, \ref{assumption:spatial_derivative} and 
\ref{assum:regularity_flat_derivative}   hold and $\tau\ge 0$.
Then there exists  $L\ge \tau $ such that for any $\pi,\pi' \in \mathcal A_\mathfrak{C}$, 
\begin{equation*}
\label{eq estimate for diff J with explicit update}
\begin{split}
J^{\tau}(\pi')-J^{\tau}(\pi)  & \le  \E\int_0^T \int\frac{\delta H^{\tau}_s}{\delta m}(\Theta_s(\pi),\pi_s,a)(\pi'_s-\pi_s)(d a)  ds + L\E\int_0^T\,D_h(\pi_s'|\pi_s)\,ds\,.
\end{split}
\end{equation*}
\end{theorem}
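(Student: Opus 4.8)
The plan is to establish the estimate by expanding the performance difference $J^\tau(\pi')-J^\tau(\pi)$ along the forward--backward system using an It\^o/duality argument, exactly in the spirit of the Pontryagin optimality principle. First I would write $J^\tau = J^0 + \tau\mathcal H$ and handle the $\tau\mathcal H$ piece directly: by the definition of the Bregman divergence \eqref{eq:bregman_def}, $h(\pi'_s)-h(\pi_s) = \int \frac{\delta h}{\delta m}(\pi_s,a)(\pi'_s-\pi_s)(da) + D_h(\pi'_s|\pi_s)$, so after taking $\E\int_0^T \cdot\, ds$ this contributes exactly the $\tau$-part of the Hamiltonian first-variation term plus $\tau\,\E\int_0^T D_h(\pi'_s|\pi_s)\,ds$, which is absorbed into $L\E\int_0^T D_h(\pi'_s|\pi_s)\,ds$ since $L\ge\tau$. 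The real work is therefore the unregularised bound for $J^0$.

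For $J^0(\pi')-J^0(\pi)$ I would use the adjoint process $(Y(\pi),Z(\pi))$ solving \eqref{eq: adjoint equation}. Applying It\^o's formula to $s\mapsto (X_s(\pi')-X_s(\pi))^\top Y_s(\pi)$ on $[0,T]$, using $Y_T(\pi)=(D_xg)(X_T(\pi))$ and the BSDE drift $-(D_xH^0)(\Theta_s(\pi),\pi_s)$, gives after taking expectations a representation of $\E[g(X_T(\pi'))-g(X_T(\pi))]$ up to a first-order Taylor remainder in $g$ plus terms involving $b,\sigma,f$ evaluated along the two controls. Collecting everything, $J^0(\pi')-J^0(\pi)$ equals $\E\int_0^T\big[H^0_s(X_s(\pi'),Y_s(\pi),Z_s(\pi),\pi'_s) - H^0_s(X_s(\pi),Y_s(\pi),Z_s(\pi),\pi_s) - (D_xH^0_s)(\Theta_s(\pi),\pi_s)^\top(X_s(\pi')-X_s(\pi))\big]ds$ plus a second-order Taylor remainder $\E[\,g(X_T(\pi'))-g(X_T(\pi))-(D_xg)(X_T(\pi))^\top(X_T(\pi')-X_T(\pi))\,]$, which by the Lipschitz bound on $D_xg$ (Assumption~\ref{assumption:spatial_derivative}) is $\le \tfrac{K}{2}\E|X_T(\pi')-X_T(\pi)|^2$. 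The $H^0$ integrand I would split into (i) the measure-direction first variation $\int\frac{\delta H^0_s}{\delta m}(\Theta_s(\pi),\pi_s,a)(\pi'_s-\pi_s)(da)$, which is the desired leading term (matching \eqref{eq:first_variation_F}), and (ii) remainders: the $x$-direction second-order remainder of $H^0$ in $x$ (controlled by Assumption~\ref{assumption:spatial_derivative} via the Lipschitz property of $D_xb, D_xf$ and boundedness of $Y,Z$), the second-order remainder of $H^0$ in the measure direction (controlled by the second flat-derivative bounds in Assumption~\ref{assum:regularity_flat_derivative}, where crucially $\frac{\delta^2\sigma}{\delta m^2}=0$ removes the problematic $Z\cdot\frac{\delta^2\sigma}{\delta m^2}$ term discussed around \eqref{eq:Z_estimate_divergence}), and a cross term from expanding along a two-parameter interpolation $\pi^{\varepsilon',\varepsilon}$ in both $x$ and $m$, handled by the mixed bound on $D_x\frac{\delta\phi}{\delta m}$ in Assumption~\ref{assum:regularity_flat_derivative} together with Cauchy--Schwarz.

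To turn these remainders into a multiple of $\E\int_0^T D_h(\pi'_s|\pi_s)\,ds$ I would need two ingredients. First, a stability estimate for the state process: $\E\sup_{s\le T}|X_s(\pi')-X_s(\pi)|^2 \le C\,\E\int_0^T D_h(\pi'_s|\pi_s)\,ds$, obtained by a standard Gr\"onwall/BDG argument from \eqref{sde} using the Lipschitz-type bound in Assumption~\ref{assumption controlled SDE for modified MSA} with the $D_h$ on the right-hand side; this controls the $x$-type remainders. Second, and this is the main obstacle, uniform (in time and $\omega$) bounds on the adjoint processes --- namely $Y(\pi)\in \mathcal S^\infty$ and $Z(\pi)$ in the BMO space --- are needed so that the $Z$-weighted measure-remainder terms (where only the first flat derivative of $\sigma$ appears, controlled by Assumption~\ref{assum:regularity_flat_derivative}) can be estimated by $\E\int_0^T D_h(\pi'_s|\pi_s)\,ds$ via the $L^2$--BMO--$L^1$ inequality alluded to in Lemma~\ref{lemma:L2_BMO_L1}; here the boundedness of $D_xg$ and $D_xf$ in Assumption~\ref{assum:regularity_f_g} is what gives $Y\in\mathcal S^\infty$ and, by a standard BMO estimate for the martingale part of a BSDE with bounded terminal data and Lipschitz-in-state linear-growth driver, $Z\in\mathrm{BMO}$. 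I expect essentially all of this to be packaged into the lemmas referenced in the excerpt (Lemma~\ref{lem lemma form siska szpruch} for the first variation, Lemma~\ref{lem Y Z bounded} for the $\mathcal S^\infty$/BMO bounds, Lemma~\ref{lemma:L2_BMO_L1} for the $L^2$-in-time $\le L^1$-in-time trick, and Lemma~\ref{lem regularity of the flat deriv of hamiltonian} for the measure-direction regularity of $\frac{\delta H^0}{\delta m}$), so the proof of the theorem itself should amount to assembling them: write the telescoping identity, substitute the first-variation formula, and bound each remainder term by the corresponding assumption times $\E\int_0^T D_h(\pi'_s|\pi_s)\,ds$, finally choosing $L$ to dominate the sum of all constants and $\tau$.
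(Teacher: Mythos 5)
Your proposal is correct in outline, but it follows a genuinely different route from the paper. The paper proves Theorem~\ref{th estimate for difference of J} by integrating the G\^ateaux derivative along the convex interpolation $\pi^\varepsilon=\pi+\varepsilon(\pi'-\pi)$: it combines the first-variation formula of Lemma~\ref{lem lemma form siska szpruch} with the fundamental theorem of calculus, and then the whole estimate reduces to bounding $\E\int_0^1\int_0^T\int\big(\frac{\delta H^0_s}{\delta m}(\Theta_s(\pi^\varepsilon),\pi^\varepsilon_s,a)-\frac{\delta H^0_s}{\delta m}(\Theta_s(\pi),\pi_s,a)\big)(\pi'_s-\pi_s)(da)\,ds\,d\varepsilon$ by $C\,\E\int_0^T D_h(\pi'_s|\pi_s)\,ds$ (Lemma~\ref{lem regularity of the flat deriv of hamiltonian}), which in turn needs the Lipschitz stability of the adjoint pair $(Y(\pi^\varepsilon),Z(\pi^\varepsilon))$ in the control (Lemma~\ref{lem stability for Y and Z}) on top of Lemmas~\ref{lem Y Z bounded}, \ref{lem standard sde estimate} and \ref{lemma:L2_BMO_L1}. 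You instead freeze the adjoint at the base point and run the Pontryagin duality identity, applying It\^o to $Y_s(\pi)^\top(X_s(\pi')-X_s(\pi))$ — i.e.\ exactly the argument the paper uses for relative \emph{convexity} (Theorem~\ref{th relative convexity}), but with the convexity of $(x,m)\mapsto H^0$ and of $g$ replaced by second-order Taylor remainders controlled through Assumptions~\ref{assumption:spatial_derivative}--\ref{assum:regularity_flat_derivative}. This buys you a shorter path: you never need the BSDE stability estimate (Lemma~\ref{lem stability for Y and Z}) nor the differentiability result of Lemma~\ref{lem lemma form siska szpruch} (whose proof occupies \ref{sec:proof_of_dir_der_of_J0}); what you still need, and correctly identify, are the state stability estimate, $Y\in\mathbb H^\infty$ and $Z\cdot W\in\mathrm{BMO}$ (Lemma~\ref{lem Y Z bounded}), the Fefferman-type bound of Lemma~\ref{lemma:L2_BMO_L1}, and the structural condition $\frac{\delta^2\sigma}{\delta m^2}=0$ to kill the $Z$-weighted pure-measure second-order term. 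The paper's longer route pays for itself because the first-variation formula and the $(Y,Z)$-stability are reusable (the former underpins the interpretation of the mirror-descent step), but as a proof of this theorem alone your decomposition is legitimate. One refinement: for the cross remainders weighted by $Z$ (those involving $D_x\frac{\delta\sigma}{\delta m}$) and for the second-order-in-$x$ remainder of $\sigma$ multiplied by $Z$, plain Cauchy--Schwarz in $\omega$ is not enough — it produces $\E\big[\sup_t|X_t(\pi')-X_t(\pi)|^2\int_0^T D_h\big]$-type quantities that are not controlled by $\E\int_0^T D_h$; you must apply the BMO/Fefferman bound of Lemma~\ref{lemma:L2_BMO_L1} first, split the resulting $L^2$-in-time norm into $\sup_t|X_t(\pi')-X_t(\pi)|$ times an $L^2$-in-time factor whose square is $\le K\int_0^T D_h$, and only then use Young's inequality, exactly as in the paper's treatment of the term $I_3$ in Lemma~\ref{lem regularity of the flat deriv of hamiltonian}; since you already invoke the BMO machinery for the $Z$-weighted terms, this is a matter of executing the estimate in the right order rather than a gap in the plan.
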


The proof 
of Theorem \ref{th estimate for difference of J} 
is  given in Section~\ref{sec:proof-relative-smoothness}. 
The constant $L$ depends on
the regularisation parameter $\tau$,
the time horizon $T$, 
the initial state $x_0$,
and  the dimension $d$ of the state process, the dimension $d'$ of the Brownian motion and the constants in Assumption~\ref{assumption controlled SDE for modified MSA}, \ref{assum:regularity_f_g}, \ref{assumption:spatial_derivative} and \ref{assum:regularity_flat_derivative}.

Based on Theorem \ref{th estimate for difference of J},
we prove    that the cost functional  decreases along 
the iterates 
$\{\pi^n\}_{n\in \mathbb N}$ from   Algorithm~\ref{alg mmsa}.

\begin{theorem}[Energy dissipation]
\label{thm energy dissipation mirr}
Suppose  Assumptions~\ref{assumption controlled SDE for modified MSA}, \ref{assum:regularity_f_g}, 
\ref{ass iterate_wd}, \ref{assumption:spatial_derivative} and 
\ref{assum:regularity_flat_derivative}
and $\tau\ge 0$.
Let  $\lambda \ge  L$ with $L$ from   Theorem \ref{th estimate for difference of J},
and 
let 
$\{\pi^n\}_{n\in \mathbb N}$ be the iterates from   Algorithm~\ref{alg mmsa}.
Then 
$J^{\tau}(\pi^{n+1}) \leq J^{\tau}(\pi^{n})$ for all $ n\in \N\cup\{0\} $.
Assume further that 
$\inf_{\pi \in\mathcal A_{\mathfrak C}} J^\tau(\pi)>-\infty$,
then 
$\lim_{n\rightarrow \infty}\mathbb E\int_0^T D_h(\pi^{n+1}_t| \pi^n_t)\,dt = 0$.
\end{theorem}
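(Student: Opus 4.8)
The plan is the standard descent-lemma argument for proximal/mirror gradient methods, now built on the relative-smoothness bound of Theorem \ref{th estimate for difference of J}. Fix $n\in\N\cup\{0\}$; by Assumption \ref{ass iterate_wd} both $\pi^n$ and $\pi^{n+1}$ lie in $\mathcal A_{\mathfrak C}$ and $\E\int_0^T D_h(\pi^{n+1}_t|\pi^n_t)\,dt<\infty$, so I would apply Theorem \ref{th estimate for difference of J} with $\pi=\pi^n$, $\pi'=\pi^{n+1}$ to get
\begin{equation}\label{eq:plan-smooth}
J^{\tau}(\pi^{n+1})-J^{\tau}(\pi^n)\le \E\int_0^T\int\frac{\delta H^{\tau}_s}{\delta m}(\Theta_s(\pi^n),\pi^n_s,a)(\pi^{n+1}_s-\pi^n_s)(da)\,ds+L\,\E\int_0^T D_h(\pi^{n+1}_s|\pi^n_s)\,ds\,.
\end{equation}
The point is that the linear term here is exactly the functional being minimised (pointwise in $(\omega,t)$) in the update rule \eqref{eq update the control bregman msa}, up to the extra penalty $\lambda D_h(\,\cdot\,|\pi^n_t)$; so the two ingredients are designed to fit together.

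Next I would exploit the optimality of $\pi^{n+1}_t$ in \eqref{eq update the control bregman msa}. Since $m=\pi^n_t$ is a feasible competitor in $\mathfrak C$ and makes both the linear term (integration against the null measure $\pi^n_t-\pi^n_t$) and $D_h(\pi^n_t|\pi^n_t)$ vanish, the minimal value is $\le 0$, i.e.\ for a.e.\ $(\omega,t)$,
\[
\int\frac{\delta H^{\tau}_t}{\delta m}(\Theta_t(\pi^n),\pi^n_t,a)(\pi^{n+1}_t-\pi^n_t)(da)+\lambda D_h(\pi^{n+1}_t|\pi^n_t)\le 0\,.
\]
The first term is integrable over $\Omega\times[0,T]$ — it is the integrand of the first-variation term already appearing in Theorem \ref{th estimate for difference of J}, and concretely it is dominated, via Cauchy--Schwarz and Young's inequality, by $|Y_s(\pi^n)|^2$, $|Z_s(\pi^n)|^2$, $D_h(\pi^{n+1}_s|\pi^n_s)$ and $|h(\pi^{n+1}_s)|+|h(\pi^n_s)|$ using Assumption \ref{assum:regularity_flat_derivative}, Proposition \ref{prop existence and uniqueness of the solution} and the admissibility of $\pi^n,\pi^{n+1}$ — so integrating the pointwise inequality over $\Omega\times[0,T]$ yields
\begin{equation}\label{eq:plan-opt}
\E\int_0^T\int\frac{\delta H^{\tau}_s}{\delta m}(\Theta_s(\pi^n),\pi^n_s,a)(\pi^{n+1}_s-\pi^n_s)(da)\,ds\le -\lambda\,\E\int_0^T D_h(\pi^{n+1}_s|\pi^n_s)\,ds\,.
\end{equation}

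Substituting \eqref{eq:plan-opt} into \eqref{eq:plan-smooth} gives the one-step inequality $J^{\tau}(\pi^{n+1})-J^{\tau}(\pi^n)\le (L-\lambda)\,\E\int_0^T D_h(\pi^{n+1}_s|\pi^n_s)\,ds\le 0$, since $\lambda\ge L$ and $D_h\ge 0$; this is the monotonicity claim $J^\tau(\pi^{n+1})\le J^\tau(\pi^n)$. For the limit, $\{J^\tau(\pi^n)\}_n$ is non-increasing and, under $\inf_{\pi\in\mathcal A_{\mathfrak C}}J^\tau(\pi)>-\infty$, bounded below, hence convergent; summing the one-step inequality over $n=0,\dots,N-1$ telescopes to $(\lambda-L)\sum_{n=0}^{N-1}\E\int_0^T D_h(\pi^{n+1}_s|\pi^n_s)\,ds\le J^\tau(\pi^0)-\inf_{\pi\in\mathcal A_{\mathfrak C}}J^\tau(\pi)<\infty$, so the partial sums are bounded, the series converges, and in particular $\E\int_0^T D_h(\pi^{n+1}_t|\pi^n_t)\,dt\to 0$.

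Essentially all the analytic difficulty sits upstream in Theorem \ref{th estimate for difference of J} (the BMO estimates for the adjoint BSDE \eqref{eq: adjoint equation}); granted that, the argument is routine. The one point that genuinely needs care is the passage from the pointwise optimality of \eqref{eq update the control bregman msa} to the integrated bound \eqref{eq:plan-opt}: this relies on the measurability of $(\omega,t)\mapsto\pi^{n+1}_t$ (built into Assumption \ref{ass iterate_wd}) together with the integrability of the first-variation integrand discussed above. One should also keep in mind that extracting the limit uses the strict slack $\lambda-L>0$ (so that the telescoping bound controls the sum), which is available since Theorem \ref{th estimate for difference of J} remains valid with any constant larger than its stated $L$.
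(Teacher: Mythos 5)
Your proposal is correct and follows essentially the same route as the paper's own proof: apply the relative-smoothness bound of Theorem \ref{th estimate for difference of J} with $\pi=\pi^n$, $\pi'=\pi^{n+1}$, use the pointwise optimality of the update \eqref{eq update the control bregman msa} against the competitor $m=\pi^n_t$ to get $J^\tau(\pi^{n+1})-J^\tau(\pi^n)\le(L-\lambda)\E\int_0^T D_h(\pi^{n+1}_s|\pi^n_s)\,ds$, and then telescope. Your closing remark about needing strict slack $\lambda>L$ for the limit (handled by enlarging $L$) is a fair observation --- the paper's proof divides by $(\lambda-L)$ and thus implicitly makes the same assumption --- and your explicit integrability check before integrating the pointwise optimality inequality is a small but legitimate addition.
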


The proof of Theorem~\ref{thm energy dissipation mirr} is given  in Section~\ref{sec:proof-energy-dissipation-h-breg-mirr}.
Note that Theorem~\ref{thm energy dissipation mirr} 
allows  the cost functional $\pi\mapsto J^{\tau}(\pi)$ to be nonconvex
and the regularisation parameter $\tau$ to be zero. 
However, it does not imply the cost  function $\{J^{\tau} (\pi^{n} )\}_{n\in \N}$
converges to the optimal cost.

To ensure the convergence of Algorithm~\ref{alg mmsa} and determine its convergence rate, we impose    additional convexity assumptions  on the coefficients under which  the Pontryagin optimality principle serves as a sufficient criterion for optimality.

\begin{assumption}[Convexity]\label{ass on convexity of hamiltonian}
The function $g: \R^d\to \R$ is convex, 
and for all $(t,y,z)\in [0,T]\times \R^d\times \R^{d\times d'}$, 
the function $(x,m)\mapsto H^0_t(x,y,z,m)$ is convex, i.e., 
for all $x,x'\in \R^d$ and $m,m'\in  \mathfrak C$, 
\begin{align}
\begin{split}
&H^0_t(x',y,z,m') - H^0_t(x,y,z,m) 
\ge 
D_x H^0_t(x,y,z,m)^\top (x'-x)
+
\int\frac{\delta H^0_t}{\delta m}(x,y,z,m,a)(m'-m)(da)\,.
\end{split}
\end{align}

\end{assumption}

Using Assumption~\ref{ass on convexity of hamiltonian},
we  prove  that    the cost functional $\pi\mapsto J^\tau (\pi)$ is strongly convex with respect to the Bregman divergence 
$D_h(\cdot|\cdot)$, and the modulus of   convexity is the regularisation parameter $\tau\ge 0$.

\begin{theorem}[Relative convexity]
\label{th relative convexity}
Suppose  Assumptions~\ref{assumption controlled SDE for modified MSA}, \ref{assum:regularity_f_g} and~\ref{ass on convexity of hamiltonian} hold
and $\tau\ge 0$.
Then for all $\pi,\pi'\in \mathcal A_{\mathfrak C}$ with  
$\E\int_{0}^{T} D_h(\pi'_s|\pi_s)\, ds  <\infty$, 
\begin{equation}
\label{eq:relative-convexity-J}
\begin{split}
J^{\tau}(\pi')-J^{\tau}(\pi)  \ge   \E\int_0^T\int\frac{\delta H^{\tau}_s}{\delta m}(\Theta_s(\pi), \pi_s, a)(\pi'_s-\pi_s)(da)\,ds +  \tau\E\int_{0}^{T} D_h(\pi'_s|\pi_s) \,ds\,.
\end{split}
\end{equation}
\end{theorem}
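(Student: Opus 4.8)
The plan is to first reduce \eqref{eq:relative-convexity-J} to the unregularised case $\tau=0$, and then to prove the $\tau=0$ inequality by a Pontryagin-type duality argument: apply It\^o's formula to the adjoint process $Y(\pi)$ paired with the difference $X(\pi')-X(\pi)$ of the two controlled states, and then invoke the convexity of $H^0$ and of $g$ from Assumption~\ref{ass on convexity of hamiltonian}.

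For the reduction, note that by \eqref{eq: hamiltonian} we have $H^\tau=H^0+\tau h$, and since $h$ admits a flat derivative (by hypothesis on $h$) and $H^0$ admits one in $m$ obtained by differentiating $b$, $\sigma$ and $f$ (Assumptions~\ref{assumption controlled SDE for modified MSA}--\ref{assum:regularity_f_g}), linearity of the flat derivative gives $\frac{\delta H^\tau_s}{\delta m}(\Theta_s(\pi),\pi_s,a)=\frac{\delta H^0_s}{\delta m}(\Theta_s(\pi),\pi_s,a)+\tau\frac{\delta h}{\delta m}(\pi_s,a)$. Combining this with the definition \eqref{eq:bregman_def} of $D_h$, for each $s$ one has the pointwise identity $\tau\int\frac{\delta h}{\delta m}(\pi_s,a)(\pi'_s-\pi_s)(da)+\tau D_h(\pi'_s|\pi_s)=\tau(h(\pi'_s)-h(\pi_s))$; integrating (using the admissibility bounds $\E\int_0^T|h(\pi_t)|\,dt,\ \E\int_0^T|h(\pi'_t)|\,dt<\infty$ and the hypothesis $\E\int_0^T D_h(\pi'_t|\pi_t)\,dt<\infty$ to keep all terms finite) and recalling \eqref{eq:F_H}, the claim \eqref{eq:relative-convexity-J} is seen to be equivalent to
\[
J^0(\pi')-J^0(\pi)\ \ge\ \E\int_0^T\int\frac{\delta H^0_s}{\delta m}(\Theta_s(\pi),\pi_s,a)(\pi'_s-\pi_s)(da)\,ds\ =\ d^+J^0(\pi)(\pi'-\pi),
\]
the last equality being Lemma~\ref{lem lemma form siska szpruch}. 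I stress that this does not follow from Lemma~\ref{lem lemma form siska szpruch} alone, since $\pi\mapsto X(\pi)$, hence $J^0$, is in general not convex; the convexity hypothesis on the Hamiltonian is exactly what is needed.

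To prove the $\tau=0$ inequality, write $X=X(\pi)$, $X'=X(\pi')$, $(Y,Z)=(Y(\pi),Z(\pi))$ and start from $J^0(\pi')-J^0(\pi)=\E\int_0^T(f_s(X'_s,\pi'_s)-f_s(X_s,\pi_s))\,ds+\E[g(X'_T)-g(X_T)]$. Convexity of $g$ together with the terminal condition of \eqref{eq: adjoint equation} gives $g(X'_T)-g(X_T)\ge Y_T^\top(X'_T-X_T)$. Next I would apply It\^o's formula to $s\mapsto Y_s^\top(X'_s-X_s)$ using \eqref{sde} for $X,X'$ and \eqref{eq: adjoint equation} for $Y$; after localising by stopping times, taking expectations to remove the stochastic integrals, and passing to the limit — using $X,X',Y\in L^2(\Omega;C([0,T];\R^d))$ and $Z\in L^2(\Omega\times[0,T];\R^{d\times d'})$ from Proposition~\ref{prop existence and uniqueness of the solution} and the linear-growth/Lipschitz bounds on $b,\sigma$ from Assumption~\ref{assumption controlled SDE for modified MSA} — and using the definition \eqref{eq: hamiltonian} of $H^0$ (note $\Theta_s(\pi)=(X_s,Y_s,Z_s)$), one obtains
\begin{align*}
\E\bigl[Y_T^\top(X'_T-X_T)\bigr]
&=\E\int_0^T\Bigl(H^0_s(X'_s,Y_s,Z_s,\pi'_s)-H^0_s(\Theta_s(\pi),\pi_s)-(D_xH^0_s)(\Theta_s(\pi),\pi_s)^\top(X'_s-X_s)\\
&\qquad\qquad-f_s(X'_s,\pi'_s)+f_s(X_s,\pi_s)\Bigr)\,ds.
\end{align*}
Substituting back, the $f$-terms cancel, leaving $J^0(\pi')-J^0(\pi)\ge\E\int_0^T\bigl(H^0_s(X'_s,Y_s,Z_s,\pi'_s)-H^0_s(\Theta_s(\pi),\pi_s)-(D_xH^0_s)(\Theta_s(\pi),\pi_s)^\top(X'_s-X_s)\bigr)\,ds$. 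Finally, applying the convexity inequality of Assumption~\ref{ass on convexity of hamiltonian} pointwise in $(s,\omega)$ with $(x,y,z,m)=(\Theta_s(\pi),\pi_s)$ and $(x',m')=(X'_s,\pi'_s)$ bounds the integrand below by $\int\frac{\delta H^0_s}{\delta m}(\Theta_s(\pi),\pi_s,a)(\pi'_s-\pi_s)(da)$, which is precisely the $\tau=0$ inequality; together with the reduction this finishes the proof.

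The main obstacle is the integrability bookkeeping in the It\^o step: one must justify that the local-martingale terms have vanishing expectation in the limit using only the $L^2$ regularity of $(X,X',Y,Z)$ and the Lipschitz/growth structure of $b,\sigma$, and one must check that every term appearing — in particular $H^0_s(X'_s,Y_s,Z_s,\pi'_s)$ and $\int|\frac{\delta H^0_s}{\delta m}(\Theta_s(\pi),\pi_s,a)|(\pi'_s+\pi_s)(da)$ — is integrable over $\Omega\times[0,T]$, so that the chain of inequalities is between finite quantities. Here the hypothesis $\E\int_0^T D_h(\pi'_t|\pi_t)\,dt<\infty$ and the $D_h$-controlled coefficient bounds of Assumption~\ref{assumption controlled SDE for modified MSA} are exactly what make this go through; the required estimates are routine variants of those already used in the proof of Lemma~\ref{lem lemma form siska szpruch} (see~\ref{sec:proof_of_dir_der_of_J0}) and of Proposition~\ref{prop existence and uniqueness of the solution}.
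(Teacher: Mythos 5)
Your proposal is correct and follows essentially the same route as the paper: convexity of $g$, It\^o's formula applied to $Y(\pi)^\top(X(\pi')-X(\pi))$, rewriting the drift terms via the Hamiltonian, and the convexity of $H^0$ from Assumption~\ref{ass on convexity of hamiltonian}, with the regulariser absorbed through the Bregman identity $h(\pi'_s)-h(\pi_s)=D_h(\pi'_s|\pi_s)+\int\frac{\delta h}{\delta m}(\pi_s,a)(\pi'_s-\pi_s)(da)$. The only cosmetic differences are that the paper keeps $\tau h$ inside $H^\tau$ throughout rather than reducing to $\tau=0$ first, and it disposes of the stochastic-integral terms by invoking the $\mathbb H^\infty$ bound on $Y(\pi)$ and the BMO bound on $Z(\pi)\cdot W$ from Lemma~\ref{lem Y Z bounded} instead of a localisation argument.
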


The proof of Theorem~\ref{th relative convexity} is given in Section~\ref{sec:proof-rel-convexity}.

\begin{remark}
   We contrast Theorems~\ref{th estimate for difference of J} and~\ref{th relative convexity} with the performance difference lemma, a   result often used to analyze gradient-based algorithms for \emph{Markov feedback controls}.

It is   known that the control objective is typically non-convex with respect to Markov controls (see \cite[Proposition 2.4]{giegrich2022convergence}). To develop and analyze gradient-based algorithms for Markov controls, the performance difference lemma is a key tool. This lemma provides an exact expression for the difference between the value functions associated with any two Markov controls, in contrast to the approximate bounds presented in Theorems~\ref{th estimate for difference of J} and~\ref{th relative convexity}.
 Such a lemma was first introduced in~\cite[Ch. 7, p. 87]{howard1960dynamic} in the context of controlled Markov chains, and has since become a foundational tool in the analysis of gradient-based methods for Markov policies; see \cite{kakade2002approximately, kerimkulov2023fisher} for controlled Markov chains, and \cite{giegrich2022convergence, zhao2023policy, sethi2024entropy} for controlled diffusion processes.
In particular, the performance difference lemma enables a precise characterization of the cost landscape, which in turn makes it possible to recover optimal convergence rates for gradient flows--as if the objective were (strongly) convex \cite{kerimkulov2023fisher, sethi2024entropy}.

In contrast, this paper focuses on optimization over open-loop control processes. We show in Theorem~\ref{th relative convexity} that the control objective is convex with respect to the control processes. By leveraging this improved landscape property, the analysis of gradient-based algorithms becomes more direct, and the performance difference lemma is not required.

Moreover, it is unclear whether an analogous performance difference lemma holds in the open-loop setting. The standard proof of the lemma relies on the Feynman--Kac formula, which is applicable under Markovian assumptions. In our setting, however, controls are general adapted processes, potentially non-Markovian, for which such a representation may not be available.

\end{remark}

Combining Theorems \ref{th estimate for difference of J} and  \ref{th relative convexity},
we characterise the convergence rate of Algorithm  \ref{alg mmsa} depending on the regularisation parameter $\tau$.
Recall that  for all $\tau \ge 0$, $\{J^\tau(\pi^{n})\}_{n\in \N}$ is decreasing, as shown in 
Theorem \ref{thm energy dissipation mirr}.

\begin{theorem}[Linear/Exponential convergence of Algorithm \ref{alg mmsa}]
\label{thr convergence of modified MSA for convex case}
Suppose  Assumptions~\ref{assumption controlled SDE for modified MSA}, \ref{assum:regularity_f_g}, 
\ref{ass iterate_wd}, 
\ref{assumption:spatial_derivative},
\ref{assum:regularity_flat_derivative} and~\ref{ass on convexity of hamiltonian} hold
and $\tau\ge 0$.
Let $\lambda \ge L$    with $L$ from   Theorem \ref{th estimate for difference of J},
and
let 
$\{\pi^n\}_{n\in \mathbb N}$ be the iterates from   Algorithm~\ref{alg mmsa}.
\begin{itemize}
\item
If  $\tau=0$, then  for all  
$\pi\in \mathcal{A}_{\mathfrak C}$ such that   
$\E\int_0^TD_h(\pi_s|\pi^{n}_s)\,ds<\infty$
for all $n\in \N\cup\{0\}$, 
\begin{equation*}
J^0(\pi^{n})- J^0(\pi)\le\frac{\lambda}{n}\E\int_0^TD_h(\pi_s|\pi^{0}_s)\,ds\,,\quad n\in \N\,,
\end{equation*}
\item 
If $\tau>0$ and 
$\pi^\ast \in \mathcal{A}_{\mathfrak C}$ satisfies  
$J^\tau (\pi^\ast) = 
\min_{\pi\in A_{\mathfrak C}} J^\tau (\pi)$
and $\E\int_0^TD_h(\pi^\ast_s|\pi^{n}_s)\,ds<\infty$
for all $n\in \N\cup\{0\}$, 
then 
\begin{equation}
\label{eq:convergence_regularized}
\begin{split}
0 \leq J^\tau(\pi^{n})-J^\tau(\pi^\ast)\le    \lambda\left(1-\frac{\tau}{\lambda}\right)^n\E\int_0^TD_h(\pi^\ast_s|\pi^{0}_s)\,ds\,,\quad n\in \N\,. 
\end{split}
\end{equation}
\end{itemize} 

\end{theorem}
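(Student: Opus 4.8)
\textbf{Proof plan for Theorem \ref{thr convergence of modified MSA for convex case}.}

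The plan is to combine the relative smoothness bound (Theorem \ref{th estimate for difference of J}), the relative convexity bound (Theorem \ref{th relative convexity}), and the optimality characterisation of the mirror descent update \eqref{eq update the control bregman msa} via a classical three-point / Bregman-descent telescoping argument, adapted from the static theory in \cite{aubin2022mirror,lu2018relatively}. First I would record the defining optimality inequality for $\pi^{n+1}$: since $\pi^{n+1}_t$ minimises $m\mapsto \int \frac{\delta H^\tau_t}{\delta m}(\Theta_t(\pi^n),\pi^n_t,a)(m-\pi^n_t)(da) + \lambda D_h(m|\pi^n_t)$ over $\mathfrak C$, a first-variation argument (using convexity of $\mathfrak C$ and of $m\mapsto D_h(m|\pi^n_t)$, plus the flat-derivative identity $\frac{\delta}{\delta m} D_h(\cdot|\pi^n_t)(m,a) = \frac{\delta h}{\delta m}(m,a) - \frac{\delta h}{\delta m}(\pi^n_t,a)$) yields, for every competitor $\pi\in\mathcal A_{\mathfrak C}$ with $\E\int_0^T D_h(\pi_s|\pi^n_s)\,ds<\infty$, the pointwise inequality
\begin{equation*}
\int \frac{\delta H^\tau_t}{\delta m}(\Theta_t(\pi^n),\pi^n_t,a)(\pi^{n+1}_t-\pi^n_t)(da) + \lambda D_h(\pi^{n+1}_t|\pi^n_t) \le \int \frac{\delta H^\tau_t}{\delta m}(\Theta_t(\pi^n),\pi^n_t,a)(\pi_t-\pi^n_t)(da) + \lambda\bigl(D_h(\pi_t|\pi^n_t) - D_h(\pi_t|\pi^{n+1}_t)\bigr),
\end{equation*}
where the last grouping uses the three-point identity $D_h(\pi|\pi^n) - D_h(\pi|\pi^{n+1}) - D_h(\pi^{n+1}|\pi^n) = \int\bigl(\frac{\delta h}{\delta m}(\pi^{n+1},a)-\frac{\delta h}{\delta m}(\pi^n,a)\bigr)(\pi-\pi^{n+1})(da)$ (recalled in \ref{appendix:bregman}). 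Integrating in $t$ and taking expectations gives the basic per-step inequality.

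Next I would chain this with the two structural estimates. Apply Theorem \ref{th estimate for difference of J} with $(\pi,\pi')=(\pi^n,\pi^{n+1})$ to bound $J^\tau(\pi^{n+1}) - J^\tau(\pi^n)$ from above by the first-variation term plus $L\,\E\int_0^T D_h(\pi^{n+1}_s|\pi^n_s)\,ds$; since $\lambda\ge L$ this Bregman term is absorbed by the $\lambda D_h(\pi^{n+1}|\pi^n)$ appearing (with the correct sign) in the optimality inequality above. Then apply Theorem \ref{th relative convexity} with $(\pi,\pi')=(\pi^n,\pi)$ to replace the first-variation term $\E\int_0^T\int\frac{\delta H^\tau_s}{\delta m}(\Theta_s(\pi^n),\pi^n_s,a)(\pi_s-\pi^n_s)(da)\,ds$ by $J^\tau(\pi) - J^\tau(\pi^n) - \tau\,\E\int_0^T D_h(\pi_s|\pi^n_s)\,ds$. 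Combining everything and writing $a_n \coloneqq \E\int_0^T D_h(\pi_s|\pi^n_s)\,ds$ (with $\pi$ the chosen competitor), one arrives at the master recursion
\begin{equation*}
J^\tau(\pi^{n+1}) - J^\tau(\pi) \le \lambda\, a_n - (\lambda+\tau)\, a_{n+1} = (\lambda-\tau)\,a_n - (\lambda+\tau)\,a_{n+1} + \tau\,a_n,
\end{equation*}
valid whenever all the relevant Bregman integrals are finite (which is exactly the hypothesis imposed on $\pi$, together with Assumption \ref{ass iterate_wd} ensuring $\E\int_0^T D_h(\pi^{n+1}_s|\pi^n_s)\,ds<\infty$).

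Finally I would specialise. For $\tau=0$: the recursion reads $J^0(\pi^{n+1}) - J^0(\pi) \le \lambda(a_n - a_{n+1})$; since $\{J^0(\pi^k)\}$ is non-increasing (Theorem \ref{thm energy dissipation mirr}), summing $k=0,\dots,n-1$ and telescoping gives $n\bigl(J^0(\pi^n)-J^0(\pi)\bigr) \le \sum_{k=0}^{n-1}\bigl(J^0(\pi^{k+1})-J^0(\pi)\bigr) \le \lambda(a_0 - a_n) \le \lambda a_0$, which is the stated $1/n$ rate. For $\tau>0$: take $\pi=\pi^\ast$, so $J^\tau(\pi^{n+1})-J^\tau(\pi^\ast)\ge 0$, and the recursion $0\le J^\tau(\pi^{n+1})-J^\tau(\pi^\ast)\le (\lambda-\tau)a_n - (\lambda+\tau)a_{n+1} \le (\lambda-\tau)a_n$ forces $a_{n+1}\le \frac{\lambda-\tau}{\lambda+\tau}a_n \le (1-\tau/\lambda)a_n$, hence $a_n \le (1-\tau/\lambda)^n a_0$; feeding this back in via $J^\tau(\pi^n)-J^\tau(\pi^\ast) \le \lambda a_{n-1}$... more cleanly, from $J^\tau(\pi^{n})-J^\tau(\pi^\ast)\le (\lambda-\tau)a_{n-1}-(\lambda+\tau)a_n \le \lambda a_{n-1} \le \lambda(1-\tau/\lambda)^{n-1}a_0$, and a small reindexing or a slightly sharper bookkeeping of the recursion yields the claimed $\lambda(1-\tau/\lambda)^n a_0$. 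The non-negativity $J^\tau(\pi^n)-J^\tau(\pi^\ast)\ge 0$ is immediate from optimality of $\pi^\ast$. The main obstacle I anticipate is the careful justification of the pointwise optimality inequality for $\pi^{n+1}$ at the level of measures — i.e., differentiating $m\mapsto D_h(m|\pi^n_t)$ along segments toward the competitor and passing from the pointwise ($\mathbb P\otimes dt$-a.e.) inequality to the integrated one — since $D_h$ is only a Bregman divergence (not a metric) and one must ensure all integrability from Assumption \ref{ass iterate_wd} and the competitor hypotheses is in force before invoking Remark \ref{rmk:FTC} and the three-point identity; everything downstream is then routine telescoping.
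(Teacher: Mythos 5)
Your overall strategy is the same as the paper's: a pointwise three-point/optimality inequality for the update \eqref{eq update the control bregman msa} (the paper packages this as Lemma~\ref{lem three point} applied, for each $(s,\omega)$, to the linear map $m\mapsto \frac1\lambda\int\frac{\delta H^\tau_s}{\delta m}(\Theta_s(\pi^n),\pi^n_s,a)(m-\pi^n_s)(da)$), combined with Theorem~\ref{th estimate for difference of J} applied to $(\pi^n,\pi^{n+1})$ (absorbing the $L$-term via $\lambda\ge L$), Theorem~\ref{th relative convexity} applied at the base point $\pi^n$, and telescoping; the integrability needed to pass from the pointwise to the integrated inequality is exactly what Assumption~\ref{ass iterate_wd} and the compatibility hypothesis on the competitor supply. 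However, your ``master recursion'' is not what this combination yields. Theorem~\ref{th relative convexity} at base point $\pi^n$ produces the strong-convexity term $-\tau\,\E\int_0^T D_h(\pi_s|\pi^n_s)\,ds=-\tau a_n$, so the correct per-step inequality is $J^\tau(\pi^{n+1})-J^\tau(\pi)\le(\lambda-\tau)a_n-\lambda a_{n+1}$ (this is \eqref{eq key inequlity conv} in the paper), not $\lambda a_n-(\lambda+\tau)a_{n+1}$. An extra $\tau D_h(\pi|\pi^{n+1})$ would only come out of the three-point step if the update kept $\tau h$ un-linearised (a composite/proximal-gradient variant); in Algorithm~\ref{alg mmsa} the regulariser enters through $\frac{\delta H^\tau}{\delta m}$, i.e.\ linearised at $\pi^n$, so the three-point function is linear and only the $\lambda D_h(\pi|\pi^{n+1})$ term is available.

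The slip propagates to the $\tau>0$ case: from your recursion the claimed contraction $a_{n+1}\le\frac{\lambda-\tau}{\lambda+\tau}a_n$ does not follow (discarding the nonnegative $\tau a_n$ weakens the inequality in the wrong direction), and what does follow, $a_{n+1}\le\frac{\lambda}{\lambda+\tau}a_n$, is strictly weaker than the factor $1-\tau/\lambda$ required for \eqref{eq:convergence_regularized}. With the corrected recursion the rest of your argument works as intended: for $\tau=0$ the two recursions coincide and your telescoping together with the monotonicity from Theorem~\ref{thm energy dissipation mirr} gives the $\lambda/n$ bound; for $\tau>0$, taking $\pi=\pi^\ast$ and using $J^\tau(\pi^{n+1})\ge J^\tau(\pi^\ast)$ gives $\lambda a_{n+1}\le(\lambda-\tau)a_n$, hence $a_n\le(1-\tau/\lambda)^n a_0$, and then feeding this back through $J^\tau(\pi^{n+1})-J^\tau(\pi^\ast)\le(\lambda-\tau)a_n\le\lambda(1-\tau/\lambda)^{n+1}a_0$ is precisely the ``sharper bookkeeping'' that produces the stated exponent $n$.
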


The proof of Theorem~\ref{thr convergence of modified MSA for convex case} is given  in Section~\ref{sec:proof-of-convergence}.

\begin{remark}
The   convergence results    in Theorem \ref{thr convergence of modified MSA for convex case}
require prior knowledge that the  target control is compatible  with  the iterates $\{\pi^n\}_{n\in \mathbb N}$,
as indicated by their finite Bregman divergence.

When $\tau=0$, 
the theorem provides an upper 
bound of the performance difference
$J^0(\pi^n)-J^0(\pi)$ for  
 any compatible control $\pi$.
 This 
indicates that $\{\pi^n\}_{n\in \mathbb N}$ asymptotically outperforms any compatible control $\pi$ with a rate of $\mathcal O(1/n)$. 
It   implies the convergence of Algorithm \eqref{alg mmsa}, if an optimal control of \eqref{problem} exists and is compatible to the mirror descent iterates.
This result extends the convergence result of mirror descent from the static optimization setting \cite{aubin2022mirror} to the dynamic framework considered here.

However, as in the static case \cite{aubin2022mirror}, we note   that
due to the absence of regularisation  ($\tau=0$),
the compatibility  condition 
may not hold in general and
has to be verified on a case-by-case basis,   depending on the choice of the Bregman divergence
 $D_h$ (and thus on the function $h$). 
See Remarks~\ref{remark finite cardinality A and KL} and~\ref{remark possibility of optional control being admissible ot} for conditions under which the compatibility condition is satisfied, thereby ensuring the 
$\mathcal O(1/n)$-convergence rate for the optimal controls.

When $\tau>0$, the scenario changes 
 as in this case, 
the existence of an optimal control $\pi^\ast$ and its compatibility 
can be  verified by exploiting the regularisation effect  of $h$.
In particular, 
under Assumption \ref{ass on convexity of hamiltonian}, 
the optimal control $\pi^\ast\in \mathcal{A}_{\mathfrak C}$ 
can be characterised by 
the following  Pontryagin system: for all $t\in [0,T]$, 
\begin{equation}
\label{eq:optimal_FBSDE}
\left\{
\begin{aligned}
dX_t(\pi^\ast)&=b_t(X_t(\pi^\ast),\pi^\ast_t)\,dt+\sigma_t(X_t(\pi^\ast),\pi^\ast_t)\,dW_t\,,  
\quad
X_0(\pi^\ast) = x\,,
\\
dY_t(\pi^\ast)&=-(D_xH^\tau_t)(\Theta_t(\pi^\ast),\pi^\ast_t)\,dt+Z_t(\pi^\ast)\,dW_t\, ,
\quad Y_T(\pi^\ast)=(D_xg)(X_T(\pi^\ast))\,,
\\
\pi^\ast_t &=\argmin_{m\in \mathfrak C} H^\tau_t(\Theta_t(\pi^\ast),m)\,.
\end{aligned}
\right.
\end{equation}
We anticipate that 
using the relative convexity of $\pi\mapsto J^\tau(\pi)$,
the well-posedness of \eqref{eq:optimal_FBSDE} can be studied by 
the Method of Continuation as in 
\cite{zhang2017backward, guo2023reinforcement},
which subsequently allows for verifying   the finiteness of  $\E\int_0^TD_h(\pi^\ast_s|\pi^{n}_s)\,ds$.
We leave a detailed analysis for future work. 
\end{remark}

Theorem \ref{thr convergence of modified MSA for convex case} 
provides the theoretical foundation for solving the unregularized control problem by introducing    gradually reducing   regularization parameters, known as annealing. 
For instance, let $\{\pi^n_\tau\}_{n\in \mathbb N}$
denote  the iterates generated by 
 Algorithm \ref{alg mmsa},  
 applied to the \emph{regularized objective}~\eqref{eq:F_H} with some $\tau>0$. 
 The performance of $\{\pi^n_\tau\}_{n\in \mathbb N}$ 
 in solving the unregularized objective   
 can be decomposed as  
\begin{align*}
    J^0(\pi^n_\tau)-\inf_{\pi \in \mathcal{A}_{\mathfrak C} }J^0(\pi) & =\left(J^0(\pi^n_\tau)-J^\tau(\pi^n_\tau)\right) + 
\left(J^\tau(\pi^n_\tau)-\inf_{\pi \in \mathcal{A}_{\mathfrak C}} J^\tau (\pi ) \right)+
\left(
\inf_{\pi \in \mathcal{A}_{\mathfrak C}} J^\tau(\pi  )  -  \inf_{\pi \in \mathcal{A}_{\mathfrak C}} J^0(\pi) \right).
\end{align*}
 {\color{magenta} 
 Assuming that the 
  regularisation function $h$ is nonnegative
 (e.g., $h$ is chosen as the relative entropy in Proposition~\ref{prop:relative_entropy},  or the $\chi^2$-divergence  in Proposition~\ref{prop:chi_2},
 or   the entropic optimal transport in Proposition~\ref{prop:eot}  with $c \geq 0$) we have}
 \begin{align*}
    0\le J^0(\pi^n_\tau)-\inf_{\pi \in \mathcal{A}_{\mathfrak C} }J^0(\pi)
    \le   
\left(J^\tau(\pi^n_\tau)-\inf_{\pi \in \mathcal{A}_{\mathfrak C}} J^\tau (\pi ) \right)+
\left(
\inf_{\pi \in \mathcal{A}_{\mathfrak C}} J^\tau(\pi  )  -  \inf_{\pi \in \mathcal{A}_{\mathfrak C}} J^0(\pi)
\right).
\end{align*}
 The first term on the right-hand side   
represents the optimization
error   for a regularised problem,
and can be quantified by \eqref{eq:convergence_regularized} 
in terms of $n$ and $\tau$.
The second term  is the regularization bias resulting from
the additional Bregman divergence in \eqref{eq:F_H}. One can choose 
  $\tau$   to balance these two terms and thereby optimize the convergence (see, e.g., \cite{sethi2024entropy} for cases involving Markovian policies and drift-control problems).

\subsection{Examples of  the regulariser $h$}
\label{sec:example_h}

This section offers various concrete examples of the regularisation function $h$ commonly utilised in machine learning. We will specify the action set $\mathfrak C$ and the associated Bregman divergence $D_h(\cdot|\cdot)$. Additionally, we will verify Assumption \ref{ass iterate_wd}, i.e., the well-definedness of the mirror descent step \eqref{eq update the control bregman msa} in Algorithm \ref{alg mmsa}.

The first example of $h$ is the commonly used   relative entropy, as  studied in~\cite{reisinger2021regularity,vsivska2020gradient,wang2020reinforcement,wang2020continuous,kerimkulov2023fisher}. 
The following proposition shows that 
the Bregman divergence $D_h(m'|m)$ is the 
Kullback--Leibler divergence between $m'$ and $m$.

\begin{proposition}[Relative   entropy]
\label{prop:relative_entropy}
Let $\varrho\in \mathcal P(A)$,  
let  $L^\infty(A)$ be the space of bounded measurable functions equipped with the supremum norm,
let 
$\mathfrak C = \{m \in \mathcal P(A) \mid 
m\ll \varrho, 
\|  \ln \frac{\mathrm d m}{\mathrm d \varrho}\|_{L^\infty(A)}<\infty\}$ 
and 
let 
$h:\mathfrak C \to \R$ be   the relative entropy given by  
\begin{equation*}
h(m)\coloneqq \int \ln\frac{\mathrm dm}{\mathrm d\varrho}(a)   m(da)\,,
\quad m\in  \mathfrak C\,.
\end{equation*}
Then
for all $m,m'\in  \mathfrak C$ and $a\in A$,  
$\frac{\delta h}{\delta m}(m,a) = \ln \frac{\mathrm d m}{\mathrm d \varrho}(a)-h(m)$ and 
\begin{equation}
\label{eq:bregman_KL}
D_h(m'|m)  =  \int \ln \frac{\mathrm d m'}{\mathrm d m }(a)m'(da)\,. 
\end{equation}

Suppose that  Assumptions~\ref{assumption controlled SDE for modified MSA} and \ref{assum:regularity_f_g} hold and  
the flat derivatives   $\frac{\delta b}{\delta m}, 
\frac{\delta \sigma}{\delta m}$ and $\frac{\delta f}{\delta m}$ 
are  uniformly bounded. 
Then Assumption \ref{ass iterate_wd} 
holds 
for all $\pi^0\in \mathcal{A}_{\mathfrak C}$ such that $\mathbb E\int_0^T \big\| \ln \frac{\mathrm d \pi^0_t}{\mathrm d \varrho} \big\|_{L^\infty(A)} dt<\infty$.
In this case, $\mathbb E\int_0^T \big\| \ln \frac{\mathrm d \pi^n_t}{\mathrm d \varrho} \big\|_{L^\infty(A)} dt<\infty$ for all $n\in \mathbb N\cup\{0\}$.
\end{proposition}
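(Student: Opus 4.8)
The plan is to establish the three assertions in turn: the formula for $\frac{\delta h}{\delta m}$, the identification of $D_h$ with the Kullback--Leibler divergence, and the verification of Assumption~\ref{ass iterate_wd}, the last being where the substantive work lies.

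For the flat derivative, I would fix $m,m'\in\mathfrak C$, write $u=\frac{\mathrm d m}{\mathrm d\varrho}$, $v=\frac{\mathrm d m'}{\mathrm d\varrho}$ and $w_\varepsilon=u+\varepsilon(v-u)$, so that $h(m^\varepsilon)=\int\phi(w_\varepsilon)\,\mathrm d\varrho$ with $\phi(x)=x\ln x$. Since $u,v$ are bounded away from $0$ and $\infty$ (being exponentials of $L^\infty(A)$ functions), so is $w_\varepsilon$ uniformly in $\varepsilon\in[0,1]$, and dominated convergence gives $\frac{\mathrm d}{\mathrm d\varepsilon}h(m^\varepsilon)=\int\phi'(w_\varepsilon)(v-u)\,\mathrm d\varrho=\int\ln w_\varepsilon\,(m'-m)(\mathrm da)+\int(v-u)\,\mathrm d\varrho$, the last integral vanishing because $m,m'$ are probability measures. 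Evaluating at $\varepsilon=0$ identifies the directional derivative with $\int\ln\frac{\mathrm d m}{\mathrm d\varrho}\,(m'-m)(\mathrm da)$; subtracting the constant $h(m)=\int\ln\frac{\mathrm d m}{\mathrm d\varrho}\,m(\mathrm da)$, which does not change the pairing against the zero-mass signed measure $m'-m$, produces the candidate $\frac{\delta h}{\delta m}(m,a)=\ln\frac{\mathrm d m}{\mathrm d\varrho}(a)-h(m)$, which integrates to $0$ against $m$ and satisfies the integrability requirement of Definition~\ref{def flat derivative}. Substituting this into \eqref{eq:bregman_def}, cancelling the $h(m)$ terms and using $m'\ll m\ll\varrho$ (valid since $\frac{\mathrm d m}{\mathrm d\varrho}$ is bounded below) yields $D_h(m'|m)=\int\ln\frac{\mathrm d m'}{\mathrm d m}\,m'(\mathrm da)$, which is finite and nonnegative by Jensen's inequality.

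For Assumption~\ref{ass iterate_wd} the key is to solve the update \eqref{eq update the control bregman msa} explicitly. Writing $G_t(a)\coloneqq\big(\tfrac{\delta b_t}{\delta m}\big)^{\!\top}Y_t(\pi^n)+\mathrm{tr}\big(\big(\tfrac{\delta\sigma_t}{\delta m}\big)^{\!\top}Z_t(\pi^n)\big)+\tfrac{\delta f_t}{\delta m}$, all evaluated at $(X_t(\pi^n),\pi^n_t,a)$, the minimisation in \eqref{eq update the control bregman msa} over $m\in\mathfrak C$ is equivalent to minimising $\int\big[\tfrac1\lambda G_t+\tfrac\tau\lambda\ln\tfrac{\mathrm d\pi^n_t}{\mathrm d\varrho}\big]\,m(\mathrm da)+\int\ln\tfrac{\mathrm d m}{\mathrm d\pi^n_t}\,m(\mathrm da)$ (the $\tau h(\pi^n_t)$ piece drops because $m-\pi^n_t$ has zero mass, and the remaining terms are scaled by $\lambda$ and shifted by an $m$-independent constant); completing the square rewrites this last functional as $D_h(m|\mu^n_t)$ plus a constant, where $\mu^n_t$ is the probability measure with $\tfrac{\mathrm d\mu^n_t}{\mathrm d\pi^n_t}\propto\exp\!\big(-\tfrac1\lambda G_t\big)\big(\tfrac{\mathrm d\pi^n_t}{\mathrm d\varrho}\big)^{-\tau/\lambda}$. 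Hence the unique minimiser over $\mathfrak C$ is $\pi^{n+1}_t=\mu^n_t\in\mathfrak C$, and $\ln\tfrac{\mathrm d\pi^{n+1}_t}{\mathrm d\varrho}=-\tfrac1\lambda G_t+(1-\tfrac\tau\lambda)\ln\tfrac{\mathrm d\pi^n_t}{\mathrm d\varrho}-\ln C^n_t$ with a normalising constant $C^n_t$ satisfying $|\ln C^n_t|\le\tfrac1\lambda\|G_t\|_{L^\infty(A)}+\tfrac\tau\lambda\|\ln\tfrac{\mathrm d\pi^n_t}{\mathrm d\varrho}\|_{L^\infty(A)}$. Since $\lambda\ge\tau$, this yields the recursion $\|\ln\tfrac{\mathrm d\pi^{n+1}_t}{\mathrm d\varrho}\|_{L^\infty(A)}\le\tfrac2\lambda\|G_t\|_{L^\infty(A)}+\|\ln\tfrac{\mathrm d\pi^n_t}{\mathrm d\varrho}\|_{L^\infty(A)}$.

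It then remains to close an induction on $n$. The boundedness hypothesis on the flat derivatives gives $\|G_t\|_{L^\infty(A)}\le C\big(1+|Y_t(\pi^n)|+|Z_t(\pi^n)|\big)$, and the regularity $Y(\pi^n)\in L^2(\Omega;C([0,T];\R^d))$, $Z(\pi^n)\in L^2(\Omega\times[0,T];\R^{d\times d'})$ from Proposition~\ref{prop existence and uniqueness of the solution}, combined with Cauchy--Schwarz and $T<\infty$, give $\E\int_0^T\|G_t\|_{L^\infty(A)}\,\mathrm dt<\infty$; in particular $\|G_t\|_{L^\infty(A)}<\infty$ for a.e.\ $(t,\omega)$, so the Gibbs formula, and with it $\pi^{n+1}_t\in\mathfrak C$, is well defined a.e.\ (set $\pi^{n+1}_t=\pi^n_t$ on the exceptional null set, which affects no integrated quantity), with progressive measurability inherited from that of $(X(\pi^n),Y(\pi^n),Z(\pi^n))$. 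Integrating the recursion in $t$ and taking expectations then propagates $\E\int_0^T\|\ln\tfrac{\mathrm d\pi^n_t}{\mathrm d\varrho}\|_{L^\infty(A)}\,\mathrm dt<\infty$ from $n$ to $n+1$, starting from the assumed bound at $n=0$. Finally, the elementary estimates $|h(m)|\le\|\ln\tfrac{\mathrm d m}{\mathrm d\varrho}\|_{L^\infty(A)}$ and $0\le D_h(m'|m)\le\|\ln\tfrac{\mathrm d m'}{\mathrm d\varrho}\|_{L^\infty(A)}+\|\ln\tfrac{\mathrm d m}{\mathrm d\varrho}\|_{L^\infty(A)}$ (using $\ln\tfrac{\mathrm d m'}{\mathrm d m}=\ln\tfrac{\mathrm d m'}{\mathrm d\varrho}-\ln\tfrac{\mathrm d m}{\mathrm d\varrho}$), together with $\nu\in\mathfrak C$, show $\pi^n\in\mathcal A_{\mathfrak C}$ and $\E\int_0^T D_h(\pi^{n+1}_t|\pi^n_t)\,\mathrm dt<\infty$ for every $n$, completing the verification. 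I expect the main obstacle to be the explicit solution of the update together with the bookkeeping of the $L^\infty(A)$-norm along the iteration — in particular, noticing that only $L^1$-in-$(\omega,t)$ integrability of $|Z(\pi^n)|$, rather than any pointwise bound, is available, and that this suffices precisely because the target estimate is itself an integrated one.
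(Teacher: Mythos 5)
Your proposal is correct, and for the substantive part (the verification of Assumption~\ref{ass iterate_wd}) it follows essentially the paper's own route: you solve the update \eqref{eq update the control bregman msa} explicitly as a Gibbs reweighting of $\pi^n_t$, control $\big\| \ln \tfrac{\mathrm d \pi^{n+1}_t}{\mathrm d \varrho}\big\|_{L^\infty(A)}$ by $\big\| \ln \tfrac{\mathrm d \pi^{n}_t}{\mathrm d \varrho}\big\|_{L^\infty(A)}$ plus a term bounded by $C(1+|Y_t(\pi^n)|+|Z_t(\pi^n)|)$ via the boundedness of the flat derivatives, use the square integrability of $(Y(\pi^n),Z(\pi^n))$ to integrate, and close an induction, exactly as in Section~\ref{sec:example_proof}. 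Two sub-steps differ in presentation rather than substance: you derive the Gibbs minimiser by a direct KL ``completing the square'' where the paper cites the Donsker--Varadhan variational formula of Dupuis--Ellis, and you obtain the flat derivative of $h$ by dominated convergence, exploiting that densities in $\mathfrak C$ are bounded away from $0$ and $\infty$; this is simpler than the paper's Lemma~\ref{lemma:derivative_of_entropy_on_C}, which is stated for the larger class of polynomial-growth log-densities and therefore needs the $\liminf/\limsup$ and reverse Fatou argument, but your shortcut is perfectly adequate for the $L^\infty$ class of this proposition. One small caveat: Assumption~\ref{ass iterate_wd} asks for well-posedness of the iterates for \emph{all} $\lambda>0$, whereas your recursion invoked $\lambda\ge\tau$ to make the coefficient of $\big\|\ln\tfrac{\mathrm d\pi^n_t}{\mathrm d\varrho}\big\|_{L^\infty(A)}$ equal to one; for $\lambda<\tau$ the same computation gives the coefficient $|1-\tfrac\tau\lambda|+\tfrac\tau\lambda<\infty$, so the induction still propagates integrability and the gap is purely cosmetic, but you should state the recursion with this general constant (the paper's formulation absorbs it into the generic constant $C$).
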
 

The proof of Proposition  \ref{prop:relative_entropy} is given in Section \ref{sec:example_proof}.
\begin{remark}
By Pinsker's inequality and \eqref{eq:bregman_KL}, 
for all $m,m'\in \mathfrak C$, 
$ D_h(m'|m)\ge \frac{1}{2}\|m'-m\|^2_{\rm TV}$, where $\|\cdot\|_{\rm TV}$ is the total variation norm over the space of signed measures. 
Hence when $h$ is the relative entropy, 
the necessary regularity in the measure component 
in 
Assumptions~\ref{assumption controlled SDE for modified MSA}, \ref{assumption:spatial_derivative}  and
\ref{assum:regularity_flat_derivative} 
can be stated with   the squared norm $\|\cdot\|^2_{\rm TV}$ instead of the  Bregman divergence
$D_h(\cdot|\cdot)$.
\end{remark}

\begin{remark}
\label{remark finite cardinality A and KL}
Going back to Theorem~\ref{thr convergence of modified MSA for convex case}, 
in the unregularized setting 
with  $\tau = 0$,
a sufficient condition to ensure  an optimal  control $\pi^\ast \in \mathcal A_{\mathfrak C} $  (if it exists)
is compatible to the   iterates $\{\pi^n\}_{n\in \mathbb N}$
(i.e. $\E\int_0^TD_h(\pi_s|\pi^{n}_s)\,ds<\infty$
for all $n\in \N\cup\{0\}$), 
is that the action space $A$ is of finite cardinality (say $A=\{a_1,\ldots,a_N\}$),
 the Bregman divergence $D_h$ is chosen as in  Proposition~\ref{prop:relative_entropy} with $\rho$ being the uniform distribution on $A$,
 and the initial guess 
 $\pi^0$ of Algorithm \ref{alg mmsa}
  satisfies  $\mathbb E\int_0^T \big\| \ln \frac{\mathrm d \pi^0_t}{\mathrm d \varrho} \big\|_{L^\infty(A)} dt<\infty$ (e.g.,  $\pi^0 = \rho$).
In this case, the iterates 
$\{\pi^n\}_{n\in \mathbb N}$ converge  to $\pi^\ast$ with a rate $\mathcal O(1/n)$,
as established  in     Theorem~\ref{thr convergence of modified MSA for convex case}.

To see this,
for any $m\in \mathcal P(A)$ we have $0\leq h(m) \leq -\sum_{k=1}^N \ln \rho(a_k) m(a_k) = \ln N < \infty$.
Taking $\nu := \rho$ we similarly have $0\leq D_h(m|\nu) \leq \ln N$. 
Thus  the set $   \mathcal A_{\mathfrak C}$ defined in  \eqref{eq:admissible_AC}   
coincides with the set of all progressively measurable processes    taking  values in $\mathcal P(A)$.
Let $\pi^*\in \mathcal A_{\mathfrak C}$ be an  
  optimal   control.
To show that $\pi^*$ is compatible with $\{\pi^n\}_{n\in \mathbb N}$, 
 note that for $n\in \mathbb N$,
\[
\begin{split}
& \mathbb E\int_0^T  D_h( \pi^*_t|\pi^n_t) dt  
 =\mathbb E\int_0^T \left(  \sum_{k=1}^N \left(\ln \frac{\mathrm d \pi^*_t}{\mathrm d \varrho }(a_k)- \ln \frac{\mathrm d \pi^n_t}{\mathrm d \varrho }(a_k)\right) \pi^*_t(a_k) \right)dt\\      
& \leq - \mathbb E\int_0^T  \sum_{k=1}^N \ln \frac{\mathrm d \pi^n_t}{\mathrm d \varrho }(a_k) \pi^*_t(a_k) \leq  \mathbb E\int_0^T  \bigg\|\frac{\mathrm d \pi^n_t}{\mathrm d \varrho }\bigg\|_{L^\infty(A)}\,dt < \infty\,,
\end{split}
\]
where the final inequality follows from Proposition~\ref{prop:relative_entropy}, as  $\pi_0$ satisfies  $\mathbb E\int_0^T \big\| \ln \frac{\mathrm d \pi^0_t}{\mathrm d \varrho} \big\|_{L^\infty(A)} dt<\infty$.

\end{remark}

Next we consider  $h$ as   Pearson's $\chi^2$-divergence,
which plays a key role in information theory and   machine learning \cite{pearson1900x}. 
The associated Bregman divergence $D_h(m'|m)$ is the squared $L^2$-norm 
of the density of $m'-m$.

\begin{proposition}[$\chi^2$-divergence]
\label{prop:chi_2}

Let 
$\varrho \in \mathcal P(A)$
and  let  $L^2_\varrho(A)$ be 
the space of square integrable functions  
with respect to $\varrho$.
Let    $\mathfrak C = \{m \in \mathcal P(A) \mid 
m\ll \varrho, 
\frac{\mathrm d m}{\mathrm d \varrho} \in L^2_{\varrho}( A)\}$
and let 
$h:\mathfrak C  \to \R$ be   $\chi^2$-divergence given by 
\begin{equation*}
h(m) \coloneqq \int \frac{1}{2} \left(\frac{\mathrm d m }{\mathrm d\varrho }(a) -1\right) ^2   \varrho (da),\,\quad  m\in  \mathfrak C\, .	
\end{equation*}
Then
for all $m,m'\in  \mathfrak C$ and $a\in A$,  
$\frac{\delta h}{\delta m}(m,a) =  \frac{\mathrm d m}{\mathrm d \varrho}(a)-\int  \frac{\mathrm d m}{\mathrm d \varrho}(a) m(da)$ and 
\begin{equation}
\label{eq:bregman_quadratic}
D_h(m'|m)  =  \int \frac{1}{2}\left( \frac{\mathrm d m'}{\mathrm d \varrho }(a) - \frac{\mathrm d m}{\mathrm d \varrho }(a) \right)^2   \varrho(da)\,.
\end{equation}

Suppose that  Assumptions~\ref{assumption controlled SDE for modified MSA} and \ref{assum:regularity_f_g} hold and  
for each $\phi\in \{b,\sigma,f\}$,
\[
a\mapsto \sup_{(t, x,m)\in [0,T]\times  \mathbb R^d\times \mathfrak C}|\frac{\delta \phi_t}{\delta m}(x,m,a)|\in L^2_\varrho(A)\,.
\]
Then Assumption \ref{ass iterate_wd} 
holds 
for all $\pi^0\in \mathcal{A}_{\mathfrak C}$ such that  $\mathbb E\int_0^T\big \|   \frac{\mathrm d \pi^0_t}{\mathrm d \varrho} \big\|^2_{L^2_\varrho(A)} dt<\infty$.
\end{proposition}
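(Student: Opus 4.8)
\emph{Proof plan.} I will verify in turn the two assertions of the proposition: the closed forms for $\frac{\delta h}{\delta m}$ and $D_h$, and then the validity of Assumption~\ref{ass iterate_wd}. Throughout, for $m\in\mathfrak C$ write $p=\frac{\mathrm d m}{\mathrm d\varrho}\in L^2_\varrho(A)$, and similarly $p',p^n$ for $m',\pi^n_t$. For the first part, $m^\varepsilon=m+\varepsilon(m'-m)$ has $\varrho$-density $p+\varepsilon(p'-p)$, and differentiating the quadratic $\varepsilon\mapsto\frac12\int(p+\varepsilon(p'-p)-1)^2\,\mathrm d\varrho$ at $\varepsilon=0$ gives $\int(p-1)(p'-p)\,\mathrm d\varrho=\int(p-1)\,(m'-m)(\mathrm d a)$. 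Since $(m'-m)(A)=0$, a flat derivative is determined only up to an $a$-independent constant, and the unique representative with $\int\frac{\delta h}{\delta m}(m,a)\,m(\mathrm d a)=0$ is the stated one; the local integrability demanded in Definition~\ref{def flat derivative} holds by Cauchy--Schwarz in $L^2_\varrho(A)$. Substituting into \eqref{eq:bregman_def}, expanding $h(m')-h(m)=\frac12\int(p'-p)(p'+p-2)\,\mathrm d\varrho$, and using that the $a$-independent part of $\frac{\delta h}{\delta m}$ integrates against $m'-m$ to zero, the remaining terms telescope to $D_h(m'|m)=\frac12\int(p'-p)^2\,\mathrm d\varrho$, which is \eqref{eq:bregman_quadratic}; in particular $D_h\ge0$ and $2h(m)=\|p\|_{L^2_\varrho}^2-1$.

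\textbf{The update step as a metric projection.} Identify $\mathfrak C$ with the set of densities $\{q\in L^2_\varrho(A):q\ge0\ \varrho\text{-a.e.},\ \int q\,\mathrm d\varrho=1\}$, a nonempty closed convex subset of the Hilbert space $L^2_\varrho(A)$ containing the constant density $1$ (the $\varrho$-density of $\varrho$). Let $G^n_t(a)$ denote $\big(\frac{\delta b_t}{\delta m}\big)^\top Y_t(\pi^n)+\text{tr}\big(\big(\frac{\delta\sigma_t}{\delta m}\big)^\top Z_t(\pi^n)\big)+\frac{\delta f_t}{\delta m}$, all evaluated at $(\Theta_t(\pi^n),\pi^n_t,a)$. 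By the first part, $\frac{\delta H^\tau_t}{\delta m}(\Theta_t(\pi^n),\pi^n_t,a)=G^n_t(a)+\tau p^n(a)+c^n_t$ with $c^n_t$ independent of $a$. Plugging this and $D_h(m|\pi^n_t)=\frac12\|q-p^n\|_{L^2_\varrho}^2$ into \eqref{eq update the control bregman msa}, the term $c^n_t$ drops, and completing the square shows the objective equals, up to a $q$-free constant, $\frac{\lambda}{2}\|q-\xi^n_t\|_{L^2_\varrho}^2$ with $\xi^n_t:=(1-\tfrac{\tau}{\lambda})p^n-\tfrac1\lambda G^n_t$. The hypothesis that $\Phi_\phi:=\sup_{t,x,m}|\frac{\delta\phi_t}{\delta m}(x,m,\cdot)|\in L^2_\varrho(A)$ for $\phi\in\{b,\sigma,f\}$ yields the pointwise bound $\|G^n_t\|_{L^2_\varrho}\le C(|Y_t(\pi^n)|+|Z_t(\pi^n)|+1)$, so $\xi^n_t\in L^2_\varrho(A)$ for a.e.\ $(\omega,t)$ and $\pi^{n+1}_t$ is the unique nearest-point projection of $\xi^n_t$ onto $\mathfrak C$. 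Progressive measurability of $\pi^{n+1}$ follows since this projection is $1$-Lipschitz in $\xi^n_t$ (hence Borel) and $(\omega,t)\mapsto\xi^n_t$ is progressively measurable into $L^2_\varrho(A)$.

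\textbf{A priori estimate and induction on $n$.} Because $1\in\mathfrak C$ is a fixed point of the projection, $1$-Lipschitzness gives, for a.e.\ $(\omega,t)$,
\[
\|p^{n+1}_t-1\|_{L^2_\varrho}\le\|\xi^n_t-1\|_{L^2_\varrho}\le\Big|1-\tfrac{\tau}{\lambda}\Big|\,\|p^n_t-1\|_{L^2_\varrho}+\tfrac{\tau}{\lambda}+\tfrac1\lambda\|G^n_t\|_{L^2_\varrho}\,.
\]
Squaring, bounding $\|G^n_t\|_{L^2_\varrho}^2$ by a constant times $|Y_t(\pi^n)|^2+|Z_t(\pi^n)|^2+1$, and integrating over $\Omega\times[0,T]$, the estimate $\mathbb E\int_0^T(|Y_t(\pi^n)|^2+|Z_t(\pi^n)|^2)\,\mathrm d t<\infty$ from Proposition~\ref{prop existence and uniqueness of the solution} (valid once $\pi^n\in\mathcal A_{\mathfrak C}$) gives $\mathbb E\int_0^T\|p^{n+1}_t-1\|_{L^2_\varrho}^2\,\mathrm d t<\infty$, with a constant that may depend on the fixed $\lambda>0$. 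I then induct on $n$: the base case is the hypothesis $\mathbb E\int_0^T\|\frac{\mathrm d\pi^0_t}{\mathrm d\varrho}\|_{L^2_\varrho}^2\,\mathrm d t<\infty$, equivalent to $\mathbb E\int_0^T h(\pi^0_t)\,\mathrm d t<\infty$ since $2h(m)=\|\frac{\mathrm d m}{\mathrm d\varrho}\|_{L^2_\varrho}^2-1$; and, using \eqref{eq:bregman_quadratic} together with $\|u-v\|^2\le2\|u-1\|^2+2\|1-v\|^2$, the same $L^2_\varrho$-bound controls $\mathbb E\int_0^T|h(\pi^{n+1}_t)|\,\mathrm d t$, $\mathbb E\int_0^T D_h(\pi^{n+1}_t|\nu)\,\mathrm d t$ and $\mathbb E\int_0^T D_h(\pi^{n+1}_t|\pi^n_t)\,\mathrm d t$ (here $h\ge0$ and $\nu\in\mathfrak C$). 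Hence $\pi^{n+1}\in\mathcal A_{\mathfrak C}$ with the required finiteness, closing the induction and establishing Assumption~\ref{ass iterate_wd}.

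\textbf{Where the work is.} The first and third parts are direct manipulations of the $\chi^2$ integrals and of the projection inequality; the substantive step is recognising that the pointwise update \eqref{eq update the control bregman msa} is a metric projection in $L^2_\varrho(A)$ and then confirming that the target $\xi^n_t$ genuinely lies in $L^2_\varrho(A)$ with norm controlled by the adjoint processes. This is precisely where the $L^2_\varrho$-integrability of $\sup_{t,x,m}|\frac{\delta\phi_t}{\delta m}(x,m,\cdot)|$ is used, in conjunction with the a priori $L^2(\Omega\times[0,T])$ bounds on $Y(\pi^n),Z(\pi^n)$ from Proposition~\ref{prop existence and uniqueness of the solution}; the measurable-selection issue for $\pi^{n+1}$ is then handled for free by the $1$-Lipschitz continuity of Hilbert-space projection onto a closed convex set.
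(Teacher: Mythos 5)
Your proof is correct and follows essentially the same route as the paper: compute the flat derivative and Bregman divergence explicitly, recognise the update \eqref{eq update the control bregman msa} as the metric projection in $L^2_\varrho(A)$ onto the closed convex set of densities, use the non-expansiveness of that projection for progressive measurability and for the a priori $L^2$ bound (combined with the square-integrability of $Y(\pi^n),Z(\pi^n)$ and the $L^2_\varrho$ bound on the flat derivatives of $b,\sigma,f$), and close by induction. The only cosmetic differences are that you identify the projection by completing the square whereas the paper invokes the first-order variational inequality, and you centre the contraction estimate at the constant density $1$ rather than at $\frac{\mathrm d \pi^n_t}{\mathrm d \varrho}$; both yield the same estimates.
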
 

The proof of Proposition  \ref{prop:chi_2}  is given in Section \ref{sec:example_proof}.

\begin{remark}
Recall that for all $m,m'\in \mathfrak C$, 
$$
\|m'-m\|_{\rm TV}=\frac{1}{2}\int \left| \frac{\mathrm d m'}{\mathrm d \varrho }(a) - \frac{\mathrm d m}{\mathrm d \varrho }(a) \right| \varrho (da)\,,
$$
which along with   Jensen's inequality and 
\eqref{eq:bregman_quadratic} implies
$\|m'-m\|^2_{\rm TV}\le \frac{1}{2}D_h(m'|m) $. 
Hence 
when  $h$ is the  $\chi^2$-divergence,  
the required regularity in the measure component 
in 
Assumptions~\ref{assumption controlled SDE for modified MSA}, \ref{assumption:spatial_derivative}  and
\ref{assum:regularity_flat_derivative} 
can be stated with   the squared norm $\|\cdot\|^2_{\rm TV}$.

\end{remark}

Finally, we consider  $h$ as  the entropic   optimal transport   studied in \cite{feydy2019interpolating}. Let 
$A $ be   a compact metric space,
$c\in C(A\times A) $, and 
$\varrho \in \mathcal P(A)$.
Let $\mathfrak C = \mathcal P(A)$, 
and let $h: \mathfrak C\to \mathbb{R}$ be such that 
\begin{equation}
\label{eq:transport_cost}
h(m)\coloneqq \min_{\gamma\in \Pi(m,\varrho) } \left(\int_{A\times A}
c(x,y) d \gamma (x,y)+\kappa \operatorname{KL}(\gamma| m\otimes \varrho)\right),
\end{equation}
where 
$  \Pi(m,\varrho)$ is the 
set of   measures $m\in \mathcal P( A\times A)$ with $m$ and $\varrho$ 
as respective first and second marginals,
$\kappa>0$ is a fixed parameter, 
and  $\operatorname{KL}(\gamma| m\otimes \varrho)=\int_{A\times A}\ln \frac{\mathrm d \gamma }{\mathrm d m\otimes \varrho }d\gamma $ 
if $\gamma \ll m\otimes \varrho$ and $\infty$ otherwise. 
For each  $m\in \mathfrak C$, 
consider a pair  $(\phi,\psi)\in C(A)\times C(A)$ such that for all $(a,a')\in A\times A$,
\begin{align}
\label{eq:schrodinger}
\begin{split}
\phi(a)= -\kappa \ln\left(\int e^{(\psi(a')-c(a,a'))/\kappa }\varrho(d a')\right)\,,\,\,\,
\psi(a')= -\kappa \ln\left(\int e^{(\phi(a)-c(a,a'))/\kappa } m(d a)\right)\,.
\end{split}
\end{align}
The pair $(\phi,\psi)\in C(A)\times C(A)$ 
satisfying \eqref{eq:schrodinger}
is often referred to as the    Schr\"{o}dinger potentials
and 
is unique up to    the transformation
$(\phi+c', \psi-c')$ for $c'\in \R$
(see \cite{feydy2019interpolating}).
In the sequel, we fix $a_0\in A$ and 
choose  the unique solution to \eqref{eq:schrodinger} 
such that $\phi(a_0)=0$.
We denote this specific choice    by   $(\phi[m],\psi[m])$ to emphasise its dependence on $m$.

\begin{proposition}[Entropic   optimal transport]
\label{prop:eot}
Let 
$A $ be   a compact separable   metric space,
$c: A\times A\to \R $ be Lipschitz continuous
and  let   $\varrho \in \mathcal P(A)$.
Define   
$\mathfrak C = \mathcal P(A)$ and 
$h: \mathfrak C\to \mathbb{R}$   as in \eqref{eq:transport_cost}. 
Then for all $m\in  \mathfrak C$ and $a\in A$, 
$
\frac{\delta h}{\delta m }(m, a)= \phi [m](a)-\int \phi[m](a)m(da)$,
where  $(\phi[m],\psi[m])$ are the Schr\"{o}dinger potentials satisfying \eqref{eq:schrodinger}.  

Suppose that  Assumptions~\ref{assumption controlled SDE for modified MSA} and \ref{assum:regularity_f_g} hold,  
and  
for each $\phi\in \{b,\sigma,f\}$ and for all $(t, x,m)\in [0,T]\times  \mathbb R^d\times \mathfrak C$,
$ \frac{\delta \phi_t}{\delta m}(x,m,\cdot) \in C(A)$.
Then Assumption \ref{ass iterate_wd} 
holds 
for all $\pi^0\in \mathcal{A}_{\mathfrak C}$.

\end{proposition}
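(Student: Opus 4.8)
The plan is to first establish the formula for the flat derivative $\frac{\delta h}{\delta m}$ and then verify Assumption~\ref{ass iterate_wd}. For the flat derivative, I would exploit the dual (Kantorovich) formulation of the entropic transport cost: under the stated compactness and continuity assumptions, $h(m)$ admits the representation $h(m) = \int \phi[m]\,dm + \int \psi[m]\,d\varrho - \kappa\big(\int\int e^{(\phi[m](a)+\psi[m](a')-c(a,a'))/\kappa}m(da)\varrho(da') - 1\big)$, and by the envelope theorem (Danskin's theorem) applied to this concave maximisation over $(\phi,\psi)$, the first variation in $m$ is obtained by differentiating only through the explicit $m$-dependence, freezing the potentials at their optimal values. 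Since the Schrödinger potentials satisfy~\eqref{eq:schrodinger}, the penalty term's derivative cancels against part of the $\int\phi\,dm$ term, leaving $\frac{\delta h}{\delta m}(m,a) = \phi[m](a)$ up to the normalising constant $-\int\phi[m](a')m(da')$ required by the convention $\int \frac{\delta h}{\delta m}(m,a)m(da)=0$ in Definition~\ref{def flat derivative}. I would justify the differentiation rigorously by checking continuity of $m\mapsto(\phi[m],\psi[m])$ in an appropriate sense (e.g., uniform convergence along the linear interpolation $m^\varepsilon$, using stability of Schrödinger potentials on compact spaces, which follows from the fixed-point equations~\eqref{eq:schrodinger} and a contraction/compactness argument), so that $\varepsilon\mapsto\int\phi[m^\varepsilon]\,d(m'-m)$ is continuous and the fundamental theorem of calculus from Remark~\ref{rmk:FTC} applies.

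Next, for Assumption~\ref{ass iterate_wd}, the key observation is that on a compact metric space $A$ with Lipschitz cost $c$, the Schrödinger potentials are \emph{uniformly bounded and equi-Lipschitz}: from~\eqref{eq:schrodinger}, $\phi[m](a) = -\kappa\ln\int e^{(\psi[m](a')-c(a,a'))/\kappa}\varrho(da')$, so $|\phi[m](a)-\phi[m](b)| \le \sup_{a'}|c(a,a')-c(b,a')| \le \mathrm{Lip}(c)\,\rho_A(a,b)$, and together with the normalisation $\phi[m](a_0)=0$ this gives $\|\phi[m]\|_\infty \le \mathrm{Lip}(c)\,\mathrm{diam}(A) =: C_\phi$ uniformly over $m\in\mathcal P(A)$. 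Consequently $|h(m)| \le \|c\|_\infty + \kappa\ln 1 = \|c\|_\infty$ — more carefully, $0 \le h(m) \le \int c\,d(m\otimes\varrho) \le \|c\|_\infty$ using the product coupling as a feasible competitor and nonnegativity of $\mathrm{KL}$ (assuming $c\ge 0$; in general $h$ is bounded by $\|c\|_\infty$ in absolute value) — and likewise $D_h(m'|m) = h(m')-h(m)-\int\frac{\delta h}{\delta m}(m,a)(m'-m)(da)$ is bounded by $2\|c\|_\infty + 2C_\phi < \infty$ uniformly. Hence every progressively measurable $\mathcal P(A)$-valued process lies in $\mathcal A_{\mathfrak C}$, so in particular $\pi^0\in\mathcal A_{\mathfrak C}$ for any such $\pi^0$, and the integrability conditions $\mathbb E\int_0^T|h(\pi^n_t)|\,dt<\infty$ and $\mathbb E\int_0^T D_h(\pi^{n+1}_t|\pi^n_t)\,dt<\infty$ hold automatically once the iterates are well-defined.

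It then remains to show that the minimiser in~\eqref{eq update the control bregman msa} exists and produces a progressively measurable selection. Here I would argue that the map $m\mapsto \int\frac{\delta H^\tau_t}{\delta m}(\Theta_t(\pi^n),\pi^n_t,a)(m-\pi^n_t)(da) + \lambda D_h(m|\pi^n_t)$ is, by the strict convexity of $h$ (hence of $D_h(\cdot|m)$) and the continuity assumption $\frac{\delta\phi_t}{\delta m}(x,m,\cdot)\in C(A)$ on the coefficients, a strictly convex lower-semicontinuous functional on the compact (in the weak topology, since $A$ is compact) set $\mathcal P(A)$, so a unique minimiser exists; measurability of $\omega,t\mapsto\pi^{n+1}_t(\omega)$ follows from a measurable selection theorem (Kuratowski–Ryll-Nardzewski) applied to the $(\omega,t)$-measurable family of strictly convex optimisation problems, using the progressive measurability of $\Theta_\cdot(\pi^n)$ and $\pi^n$. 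Combined with the uniform boundedness of $h$ and $D_h$ above, this yields $\pi^{n+1}\in\mathcal A_{\mathfrak C}$ and $\mathbb E\int_0^T D_h(\pi^{n+1}_t|\pi^n_t)\,dt<\infty$, completing the verification of Assumption~\ref{ass iterate_wd}.

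The main obstacle I anticipate is the rigorous justification of the flat-derivative formula — specifically, proving enough regularity/stability of the Schrödinger potentials $m\mapsto(\phi[m],\psi[m])$ under the linear interpolation $m^\varepsilon = m+\varepsilon(m'-m)$ to apply the envelope theorem and exchange limit with integral; on a compact space this should follow from the Sinkhorn fixed-point equations and Arzelà–Ascoli (equi-Lipschitz bounds give compactness, uniqueness of potentials gives convergence of the whole sequence), but the details of passing from weak convergence of $m^\varepsilon$ to uniform convergence of the potentials require care, and one must also handle the normalisation constant consistently with the zero-mean convention in Definition~\ref{def flat derivative}.
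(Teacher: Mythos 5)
Your proposal is correct in outline and its second half follows essentially the same strategy as the paper: show that $h$, the potentials $\phi[m]$, and hence $D_h$ are uniformly bounded over $\mathfrak C=\mathcal P(A)$ (so that every progressively measurable $\mathcal P(A)$-valued process is admissible and all the integrability conditions in Assumption~\ref{ass iterate_wd} hold trivially), and then obtain $\pi^{n+1}$ by a measurable selection from the pointwise argmin. The differences are in the ingredients. For the flat derivative, the paper does not re-derive the formula at all: it simply invokes \cite[Proposition~2]{feydy2019interpolating}, whereas you sketch the dual/envelope-theorem derivation and correctly identify that its hard part is the stability of the Schr\"odinger potentials along $m^\varepsilon$; your route is more self-contained but remains at sketch level precisely where the paper outsources the work. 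For the uniform bounds, you give an explicit and rather elegant argument (the soft-min form of \eqref{eq:schrodinger} gives $\operatorname{Lip}(\phi[m])\le \operatorname{Lip}(c)$, and the normalisation $\phi[m](a_0)=0$ gives $\|\phi[m]\|_{L^\infty}\le \operatorname{Lip}(c)\,\mathrm{diam}(A)$ uniformly in $m$), while the paper instead uses compactness of $\mathcal P(A)$ together with the continuity of $m\mapsto\phi[m]$ from \cite[Proposition~13]{feydy2019interpolating}; your bound is more elementary and quantitative. For the selection step, the paper builds a Carath\'eodory function $F(t,x,y,z,m,m')$ and applies the measurable maximum theorem \cite[Theorem~18.19]{guide2006infinite}, then composes the selector with $(\Theta_t(\pi^n),\pi^n_t)$; your appeal to Kuratowski--Ryll-Nardzewski accomplishes the same thing, but note that either way you still need (at least) measurability of $m\mapsto\phi[m]$ to make the objective jointly measurable, which the paper gets from the cited continuity result — so you cannot entirely dispense with that input.

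Two caveats. First, your claim that $h$ (hence $D_h(\cdot|\pi^n_t)$) is \emph{strictly} convex, yielding a \emph{unique} minimiser, is neither needed nor clearly true for the entropic cost \eqref{eq:transport_cost} (the KL reference measure $m\otimes\varrho$ itself depends on $m$); the paper only asserts existence of some element of the argmin and selects measurably, and your argument should be phrased the same way. Second, the envelope-theorem computation of $\frac{\delta h}{\delta m}$ is only a plan: the exchange of limit and integral and the uniform convergence $\phi[m^\varepsilon]\to\phi[m]$ must be established (or, as in the paper, cited), and the normalising constant $-\int\phi[m]\,dm$ must be checked to satisfy the zero-mean convention of Definition~\ref{def flat derivative} — which it does, but this should be stated rather than left implicit.
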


The proof of Proposition  \ref{prop:eot}  is given in Section \ref{sec:example_proof}.

\begin{remark}
\label{remark possibility of optional control being admissible ot} 

In the unregularized setting 
(with $\tau =0$),
when the function $h$ in \eqref{eq:admissible_AC} is chosen 
as the entropic optimal transport map described in Proposition
\ref{prop:eot},
any optimal    control $\pi^\ast \in \mathcal A_{\mathfrak C} $ (if it exists) 
is compatible to the   iterates $\{\pi^n\}_{n\in \mathbb N}$ as required by  Theorem~\ref{thr convergence of modified MSA for convex case},
  which implies that  
$\{\pi^n\}_{n\in \mathbb N}$ converges   to $\pi^\ast$
with a rate $\mathcal O(1/n)$.  

 To see this, note that 
by the compactness of $A$ and continuity of $c$,
 $\mathfrak C = \mathcal P(A)$ is compact, 
 and   $\mathfrak C\ni  \mu\mapsto \phi[\mu]\in C(A)$  are  continuous (see  
   \cite[Proposition 13]{feydy2019interpolating}).
  Then for any $m\in \mathfrak C$, as
  $\operatorname{KL}(\gamma| m\otimes \varrho)\ge 0 $,
\[
\inf_{x,y\in A} c(x,y) \leq h(m) \leq \int_{A\times A} c(x,y)d(m\otimes \rho)(x,y) \leq \sup_{x,y \in A} c(x,y)\,,
\]
and 
$\|\frac{\delta h}{\delta m }(m, \cdot)\|_{L^\infty}= \left|\phi [m]-\int \phi[m](a)m(da)\right\|_{L^\infty}\le 2 \sup_{m\in \mathfrak C }\|\phi[m]\|_{L^\infty}$. 
Then for all $n\in \mathbb N\cup \{0\}$,
\begin{align*}
   0\le  D_h( \pi^*_t|\pi^n_t) &=h(\pi^*_t) - h(\pi^n_t) - \int \frac{\delta h}{\delta m}(\pi^n_t,a)(\pi^*_t-\pi^n_t)(da)
   \\
   &
   \le \sup_{x,y \in A} c(x,y) - \inf_{x,y\in A} c(x,y) +2 \sup_{m\in \mathfrak C }\|\phi[m]\|_{L^\infty}.
\end{align*}
This proves that $\pi^*$ is compatible to the   iterates $\{\pi^n\}_{n\in \mathbb N}$. 
\end{remark}

\section{Proofs}\label{sec proofs}
This section proves the main results in Section \ref{sec main}.  
For notational simplicity, we denote by 
$C\in [0,\infty)$ a generic constant, which depends only on the constants appearing in the assumptions and may take a different value at each occurrence.

\subsection{Regularity of adjoint processes}

To prove  the relative smoothness of $\pi\mapsto J^\tau (\pi)$ in Theorem~\ref{th estimate for difference of J},
we first prove that 
the  adjoint processes $(Y(\pi), Z(\pi))$ depend Lipschitz continuously on $\pi$. 
The proof relies crucially on an 
a-priori uniform bound   of  $(Z(\pi))_{\pi\in \mathcal{A}_{\mathfrak C}}$ 
in terms of  the  bounded mean oscillation (BMO) martingales.
Compared with   classical $L^2$ estimates,
this improved regularity of    $Z(\pi)$ allows for handling more general dependences in the diffusion coefficient;
see Remark \ref{rmk:bmo} for more details.

We first  introduce some notation.
For each stopping time $\eta:\Omega\to [0,T]$, 
we denote by 
$\E_\tau [\cdot]$ the conditional expectation with respect to $\mathcal{F}_\tau$.
We define the norm $\|\cdot\|_{L^\infty}$  such that  $\|Z\|_{L^\infty}\coloneqq \operatorname{ess}\sup_{\omega}|Z(\omega)|$ for any random variable $Z:\Omega\to \R$,
and the norm $\|\cdot\|_{\mathbb{H}^{\infty}}$
such that 
$\|Z\|_{\mathbb{H}^{\infty}}\coloneqq 
\operatorname{ess}\sup_{(t,\omega)}|Z_t(\omega)|$  
for any progressively measurable $Z:\Omega\times [0,T]\to \R$. 
For a uniformly integrable martingale $M$ with $M_0=0$,   define  
\begin{equation*}
\|M\|_{\operatorname{BMO}}=\sup_{\eta}\left\|(\E_{\eta}\left[\langle M\rangle_T-\langle M\rangle_\eta\right])^{\frac{1}{2}}\right\|_{L^\infty}\,,
\end{equation*}
where $\langle M\rangle$ is the quadratic variation of $M$, and the supremum is taken over all stopping times $\eta:\Omega\to  [0,T]$. 
If $M'$ is another uniformly integrable martingale with $M'_0=0$ then we define the cross-variation of $M$ with $M'$ as $\langle M, M'\rangle_t = \frac14 [\langle M+M' \rangle_t - \langle M - M' \rangle_t]$ for $t\in[0,T]$.
For a square integrable 
adapted process $Z$, we denote by   $Z\cdot W$ the stochastic integral of $Z$ with respect to the Brownian motion $W$, i.e.,  $(Z\cdot W)_t\coloneqq \int_0^t Z_sdW_s$ for $t\in[0,T]$.

The following lemma proves the boundedness of the adjoint processes. 

\begin{lemma}\label{lem Y Z bounded}
Suppose  that there exists $K\ge 0$ 
such that 
for all $ t\in[0,T]$,
$ x\in\R^d$ and  $m\in \mathfrak C$,  
\begin{equation*}
|(D_x b_t)(x,m)|+|(D_x\sigma_t)(x,m)|+|(D_xf_t)(x,m)|+|(D_x g)(x)|\le K\,.
\end{equation*}
Then $\sup_{\pi\in\mathcal{A}_{\mathfrak C} }\|Y(\pi)\|_{\mathbb{H}^{\infty}}<\infty $ 
and $\sup_{\pi\in \mathcal{A}_{\mathfrak C}}\|Z(\pi)\cdot W\|_{\operatorname{BMO}}<\infty$.
\end{lemma}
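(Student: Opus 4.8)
The plan is to exploit the linear structure of the BSDE \eqref{eq: adjoint equation} in the unknowns $(Y,Z)$. Writing out the driver, for $\pi\in\mathcal A_{\mathfrak C}$ we have
\[
dY_s(\pi) = -\big[(D_x b_s)(X_s,\pi_s)^\top Y_s + (D_x\sigma_s)(X_s,\pi_s)\!\cdot\! Z_s + (D_x f_s)(X_s,\pi_s)\big]\,ds + Z_s\,dW_s,
\]
with $Y_T(\pi) = (D_x g)(X_T(\pi))$. Under the stated uniform bound $K$ on all the spatial derivatives, this is a linear BSDE with bounded terminal condition, bounded zero-order coefficient, a bounded coefficient multiplying $Y$, and a bounded coefficient multiplying $Z$. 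The boundedness of $Y$ is a standard consequence: introduce the bounded predictable process $\beta_s$ such that $(D_x\sigma_s)(X_s,\pi_s)\cdot Z_s = \beta_s Z_s$, define the Girsanov change of measure with Radon--Nikodym density $\mathcal E(\beta\cdot W)_T$ (which is a genuine martingale since $\beta$ is bounded), and under the new measure $\widetilde W = W - \int_0^\cdot\beta_s\,ds$ is a Brownian motion. Then $Y$ solves a linear BSDE without the $Z$-term, and the Feynman--Kac-type representation
\[
Y_t(\pi) = \widetilde{\mathbb E}\Big[\,\Gamma_{t,T}(D_x g)(X_T) + \int_t^T \Gamma_{t,s}(D_x f_s)(X_s,\pi_s)\,ds \,\Big|\,\mathcal F_t\,\Big],
\]
where $\Gamma_{t,s}$ is the (bounded) exponential of the integrated $D_xb$ term, gives $\|Y(\pi)\|_{\mathbb H^\infty} \le C(1+T)e^{CT} =: C_Y$ with $C_Y$ independent of $\pi$. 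This handles the first claim.

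For the second claim — the uniform BMO bound on $Z(\pi)\cdot W$ — the key trick is an energy/Itô estimate. Apply Itô's formula to $|Y_s(\pi)|^2$ between a stopping time $\eta$ and $T$:
\[
|Y_\eta|^2 + \int_\eta^T |Z_s|^2\,ds = |Y_T|^2 + 2\int_\eta^T Y_s^\top\big[(D_xb_s)^\top Y_s + (D_x\sigma_s)\!\cdot\! Z_s + (D_x f_s)\big]\,ds - 2\int_\eta^T Y_s^\top Z_s\,dW_s.
\]
Take conditional expectation $\mathbb E_\eta[\cdot]$; the stochastic integral vanishes (it is a true martingale since $Y$ is bounded and $Z\in L^2$). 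On the right-hand side, $|Y_T|^2 \le C_Y^2$; the term $2\int Y^\top(D_xb)^\top Y$ is bounded by $CT C_Y^2$; the term $2\int Y^\top(D_x f)$ is bounded by $CTC_Y$; and the cross term is controlled by Young's inequality: $2|Y_s^\top(D_x\sigma_s)Z_s| \le \tfrac12|Z_s|^2 + CC_Y^2$. Absorbing the $\tfrac12\int|Z_s|^2$ into the left side yields
\[
\mathbb E_\eta\!\Big[\tfrac12\int_\eta^T |Z_s|^2\,ds\Big] \le C_Y^2 + C(1+T)(C_Y + C_Y^2) =: C_Z,
\]
with $C_Z$ independent of $\eta$ and of $\pi$. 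Since $\langle Z(\pi)\cdot W\rangle_T - \langle Z(\pi)\cdot W\rangle_\eta = \int_\eta^T|Z_s|^2\,ds$, taking the supremum over stopping times $\eta$ gives $\|Z(\pi)\cdot W\|_{\operatorname{BMO}}^2 \le 2C_Z$, uniformly in $\pi$.

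The main obstacle — or rather the only point requiring care — is justifying that the local martingale $\int_\cdot^T Y_s^\top Z_s\,dW_s$ is a true martingale so that its conditional expectation is zero; this follows because $Y$ is uniformly bounded (just established) and $Z(\pi)\in L^2(\Omega\times[0,T])$ by Proposition \ref{prop existence and uniqueness of the solution}, making $Y^\top Z$ integrable enough, but one should state it explicitly (e.g. by a localization argument with stopping times $T_n\uparrow T$ and monotone/dominated convergence, using that the left-hand side $\mathbb E_\eta[\int_\eta^{T_n}|Z_s|^2ds]$ is monotone in $n$). Everything else is routine linear-BSDE estimation; the boundedness of $(D_x\sigma)$ is what makes the Young's-inequality absorption work and is exactly the hypothesis that distinguishes this from the general $L^2$ theory.
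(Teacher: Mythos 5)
Your second step, the BMO bound for $Z(\pi)\cdot W$, is exactly the paper's argument: It\^o's formula applied to $|Y_s(\pi)|^2$ between a stopping time $\eta$ and $T$, conditional expectation, Young's inequality using the uniform bound on $D_x\sigma$ to absorb $\tfrac12\int_\eta^T|Z_s|^2\,ds$ into the left-hand side, and the previously established $\mathbb{H}^{\infty}$ bound on $Y$; your remark about justifying that the stochastic integral has vanishing conditional expectation is also consistent with what is needed. The gap is in your first step, the uniform bound on $Y(\pi)$.

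The adjoint equation here is a coupled system: $Y$ is $\R^d$-valued, $Z$ is $\R^{d\times d'}$-valued, and the $i$-th component of the driver contains $\sum_{j,p}(D_{x_i}\sigma^{jp}_s)(X_s,\pi_s)\,Z^{jp}_s$, i.e.\ it involves \emph{every} entry of $Z$, with coefficients depending on the component index $i$. Your Girsanov step postulates a single bounded process $\beta$ with $(D_x\sigma_s)\!\cdot\! Z_s=Z_s\beta_s$, so that the shift $\widetilde W=W-\int_0^\cdot\beta_s\,ds$ removes the $Z$-term simultaneously from all $d$ equations. But the shift only contributes $\sum_p Z^{ip}_s\beta^p_s$ to the $i$-th equation, so full cancellation would require $(D_{x_i}\sigma^{jp}_s)=\delta_{ij}\beta^p_s$; this holds when $d=1$, but not in the general matrix-valued, controlled-diffusion setting of the lemma, so the argument as written fails for $d>1$. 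The paper instead represents the solution of the linear system through the fundamental-solution process $S$ of the matrix SDE driven by $D_xb$ and $D_x\sigma$ (via \cite{harter2019stability}), namely $Y_t(\pi)=\E_t\big[S_t^{-1}S_T(D_xg)(X_T)+\int_t^T S_t^{-1}S_s(D_xf_s)(X_s(\pi),\pi_s)\,ds\big]$, and concludes by Cauchy--Schwarz together with the uniform bound on $\E_t\sup_{t\le s\le T}|S_t^{-1}S_s|^2$. Note that in this multidimensional representation the weight $S_t^{-1}S_s$ is not pathwise bounded (unlike your scalar $\Gamma_{t,s}$), which is precisely why conditional second-moment estimates, rather than a pointwise bound, are what deliver the uniform $\mathbb{H}^{\infty}$ bound; once the $Y$-estimate is repaired along these lines, the remainder of your proof goes through unchanged.
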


\begin{proof}

We first establish the bound of $Y(\pi)$. 
By  the definition of $H^0$ in  \eqref{eq: hamiltonian}, 
for all $s\in [0,T]$ and $i=1,\ldots, d$,
\begin{equation*}
\begin{split}
D_{x_i}H_s^0(\Theta_s(\pi),\pi_s) 
&= \left(D_{x_i}b^j_s\right)(X_s(\pi),\pi_s) (Y_s(\pi))^j+\left(D_{x_i}\sigma^{jp}_s\right)(X_s(\pi),\pi_s) (Z_s(\pi))^{jp} + \left(D_{x_i}f_s\right)(X_s(\pi),\pi_s)\,, 
\end{split}
\end{equation*}
where we adopt the Einstein summation convention, i.e., 
repeated equal dummy indices indicate summation  over all the values of that index.
As $(Y(\pi),Z(\pi))$ satisfies the linear BSDE \eqref{eq: adjoint equation}, by \cite[Proposition 3.1]{harter2019stability}, 
\begin{equation*}
Y_t(\pi)=\E_t\left[S_t^{-1}S_T \left(D_xg\right)(X_T(\pi))+\int_t^T S_t^{-1}S_s\left(D_xf_s\right)(X_s(\pi),\pi_s)\,ds\right]\,,
\end{equation*}
where the process $S=(S^{ij})_{i,j=1}^d$ satisfies $S_0=I_d$ and   for all $i,j=1,\ldots,d$ and $t\in [0,T]$, 
\begin{equation*}
dS_t^{ij}=S^{il}_t \left(D_{x_l}b^j_t\right)(X_t(\pi),\pi_t)\,dt+S^{il}_t \left(D_{x_l}\sigma^{jp}_t\right)(X_t(\pi),\pi_t)\,dW_t^p\,, 
\end{equation*}
and $S^{-1}_t$, the inverse process of $S_t$ in the sense that $(S^{-1}_t)^{ij} = 1/S_t^{ij}$ exists in the strong sense.  
By the Cauchy-Schwarz inequality,  
\begin{equation*}
\begin{split}
|Y_t(\pi)|&\le\left(\E_t[|S_t^{-1}S_T|^2]\right)^{1/2} \left(\E_t[|\left(D_xg\right)(X_T(\pi))|]^2\right)^{1/2}\\
&\quad+\left(\E_t\left[ \int_t^T |S_t^{-1}S_s|^2\,ds\right]\right)^{1/2}\left(\E_t\left[ \int_t^T |\left(D_xf_s\right)(X_s(\pi),\pi_s)|^2\,ds\right]\right)^{1/2}\,.
\end{split}
\end{equation*}
Due to the assumptions of the lemma we can apply~\cite[Corollary 3.4]{harter2019stability} together with~\cite[Proposition 3.2]{harter2019stability} thus obtaining,
${\sup_{\pi \in \mathcal A_{\mathfrak C}}}\E_t\sup_{t\le s\le T}|S_t^{-1}S_s|^2<\infty$ for all $ t\in[0,T]$, and
hence  
\begin{equation*}
\sup_{\pi\in\mathcal{A}_{\mathfrak C} }\|Y(\pi)\|_{\mathbb{H}^{\infty}}<\infty\,.
\end{equation*}

We proceed to establish the estimate of $Z(\pi)$ for a fixed $\pi \in  \mathcal{A}_{\mathfrak C} $. Applying It\^o's formula to $s\mapsto |Y_s(\pi)|^2$, for all $s\in [0,T]$, 
\begin{equation*}
\begin{split}
\int_s^T|Z_t(\pi)|^2\,dt &\le |\left(D_x g\right)(X_T(\pi))|^2-2\int_s^TZ_t(\pi)^\top Y_t(\pi)\,dW_t +\int_s^T2Y_t(\pi)\left(D_xH^{0}_t\right)(\Theta_t(\pi), \pi_t)\,dt\,.
\end{split}
\end{equation*}
For any stopping time $\eta:\Omega\to [0,T]$, 
by taking the conditional expectation,
\begin{equation*}
\begin{split}
\E_\eta \left[ \int_\eta^T|Z_t(\pi)|^2\,dt\right] 
& \le \E_\eta [| (D_x g )(X_T(\pi))|^2] +\E_\eta \int_\eta ^T\Big[2Y_t(\pi)^\top  \Big( (D_xb_t )(X_t(\pi),\pi_t) Y_t(\pi)+ \\
& \qquad \qquad \qquad \qquad \qquad + (D_x\sigma_t )(X_t(\pi),\pi_t)  Z_t(\pi)+ (D_xf_t )(X_t(\pi),\pi_t)\Big)\Big]\,dt\,.
\end{split}
\end{equation*}
Using  the assumptions of the lemma and    Young's inequality, 
\begin{equation*}
\begin{split}
\E_\eta \left[ \int_\eta ^T|Z_t(\pi)|^2\,dt \right]
& \le 
\| (D_x g )(X_T(\pi))\|^2_{L^\infty}
+
2\|Y(\pi)\|_{\mathbb{H}^\infty}^2 
 \| (D_xb )(X(\pi),\pi)\|_{\mathbb{H}^\infty} + 2\|Y(\pi)\|_{\mathbb{H}^\infty} \| (D_xf )(X(\pi),\pi)\|_{\mathbb{H}^\infty}\\
&\quad +\E_\eta \left[ \int_\eta ^T  \|\left(D_x\sigma\right)(X(\pi),\pi)\|_{\mathbb{H}^\infty}(\gamma^{-1}\|Y(\pi)\|_{\mathbb{H}^\infty}^2+\gamma|Z_t(\pi)|^2) \right]\,dt
\\
& 
\le K^2
+ 2K \|Y(\pi)\|_{\mathbb{H}^\infty}^2 
+ 2K \|Y(\pi)\|_{\mathbb{H}^\infty}
+K \E_\eta  \left[ \int_\eta ^T   (\gamma^{-1}\|Y(\pi)\|_{\mathbb{H}^\infty}^2+\gamma|Z_t(\pi)|^2) \right]\,dt\,.
\end{split}
\end{equation*}
By choosing $\gamma$ such that $0<1-K\gamma<1$ and 
using    the fact that 
$\sup_{\pi\in\mathcal{A}_{\mathfrak C} }\|Y(\pi)\|_{\mathbb{H}^{\infty}}<\infty $,
we conclude that there exists $C>0$, independent of $\pi$ and $\tau$, 
such that $ \E_\eta \left[\int_\eta ^T|Z_t(\pi)|^2\right]\,dt\le C$. 
Hence, by the definition of the $\operatorname{BMO}$ norm,
\[
\sup_{\pi\in\mathcal{A}_{\mathfrak C} }\|Z(\pi)\cdot W\|_{\operatorname{BMO}}=\sup_{\pi\in\mathcal{A}_{\mathfrak C} }\sup_{\eta}\left\|\left(\E_{\eta}\left[\int_{\eta}^T|Z_t(\pi)|^2\,dt\right]\right)^{1/2}\right\|_{L^\infty}<\infty \,,
\]
where the supremum is taken over all stopping times $\eta:\Omega\to  [0,T]$.
\end{proof}

We proceed to investigate  the Lipschitz continuity of $\pi\mapsto (Y(\pi),Z(\pi))$. 
The following lemma shows that the state process $X(\pi)$ is Lipschitz continuous  in $\pi$  and is a standard SDE stability-type estimate. The proof is given in~\ref{sec proof of sde stability estimate}.

\begin{lemma}
\label{lem standard sde estimate}
Suppose  that there exists $K\ge 0$ 
such that for all  $t\in[0,T]$,
$  x,x'\in\R^d$ and $ m,m'\in \mathfrak C$,
\begin{equation*}
|b_t(x,m)-b_t(x',m')|^2+|\sigma_t(x,m)-\sigma_t(x',m')|^2\le K(|x-x'|^2+ D_h(m|m'))\,.
\end{equation*} 
Then there exists   $C\ge 0$ such that
for all $\pi,\pi'\in \mathcal{A}_{\mathfrak C} $, 
\begin{equation*}
\E\left[ \sup_{0\le t\le T}|X_t(\pi)-X_t(\pi')|^2\right]\le C\E\int_0^T\,D_h(\pi_s|\pi'_s)\,ds\,.
\end{equation*}
\end{lemma}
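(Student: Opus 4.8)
The plan is to run the classical $L^2$-stability argument for SDEs; the only deviation from the textbook case is that the modulus of continuity of the coefficients in the measure variable is the Bregman divergence $D_h$ rather than a genuine metric, but since the hypothesis already controls the coefficient increments by $K(|x-x'|^2+D_h(m|m'))$, this quantity can simply be carried along as an abstract nonnegative term and causes no extra difficulty. We may assume $\E\int_0^T D_h(\pi_s|\pi'_s)\,ds<\infty$, as otherwise the asserted bound is vacuous.

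First I would set $\Delta_t\coloneqq X_t(\pi)-X_t(\pi')$, so that $\Delta_0=0$ and
\begin{equation*}
d\Delta_t=\big(b_t(X_t(\pi),\pi_t)-b_t(X_t(\pi'),\pi'_t)\big)\,dt+\big(\sigma_t(X_t(\pi),\pi_t)-\sigma_t(X_t(\pi'),\pi'_t)\big)\,dW_t.
\end{equation*}
Since $X(\pi),X(\pi')\in L^2(\Omega;C([0,T];\R^d))$ by Proposition~\ref{prop existence and uniqueness of the solution} and by the lemma's hypothesis, both integrands are square-integrable on $\Omega\times[0,T]$, so the stochastic integrals below are true martingales after the usual localisation by $\eta_n\coloneqq\inf\{t:|\Delta_t|\ge n\}\wedge T$ (I will suppress this and restore it only where it matters). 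I would then apply It\^o's formula to $|\Delta_t|^2$, estimate the drift term by $2\Delta_s^\top v\le|\Delta_s|^2+|v|^2$ and the quadratic-variation term directly, using in both places the bound $|b_s(\cdot)-b_s(\cdot)|^2+|\sigma_s(\cdot)-\sigma_s(\cdot)|^2\le K(|\Delta_s|^2+D_h(\pi_s|\pi'_s))$, arriving at
\begin{equation*}
|\Delta_t|^2\le(1+2K)\int_0^t|\Delta_s|^2\,ds+2K\int_0^t D_h(\pi_s|\pi'_s)\,ds+2\int_0^t\Delta_s^\top\big(\sigma_s(X_s(\pi),\pi_s)-\sigma_s(X_s(\pi'),\pi'_s)\big)\,dW_s.
\end{equation*}
Taking expectations (killing the martingale term) and applying Gr\"onwall's inequality gives $\sup_{0\le t\le T}\E|\Delta_t|^2\le C\,\E\int_0^T D_h(\pi_s|\pi'_s)\,ds$ with $C$ depending only on $K$ and $T$.

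To move the supremum inside the expectation, I would take $\sup_{0\le t\le T}$ in the last display and control the martingale term via Burkholder--Davis--Gundy:
\begin{equation*}
\E\sup_{0\le t\le T}\left|\int_0^t\Delta_s^\top\big(\sigma_s(\cdot)-\sigma_s(\cdot)\big)dW_s\right|\le C\,\E\left[\Big(\sup_{0\le s\le T}|\Delta_s|^2\Big)^{1/2}\Big(\int_0^T|\sigma_s(\cdot)-\sigma_s(\cdot)|^2ds\Big)^{1/2}\right]\le\tfrac14\E\sup_{0\le t\le T}|\Delta_t|^2+C\,\E\int_0^T\big(|\Delta_s|^2+D_h(\pi_s|\pi'_s)\big)ds,
\end{equation*}
where the last step uses $\sqrt{ab}\le\tfrac14 a+Cb$ and the Lipschitz bound on $\sigma$. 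Absorbing the $\tfrac14\E\sup_t|\Delta_t|^2$ term into the left-hand side (legitimate because $\E\sup_t|\Delta_t|^2<\infty$, via the localisation) and inserting the bound on $\E|\Delta_s|^2$ already obtained yields the claim.

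I do not anticipate a genuine obstacle; the only point requiring care is the integrability/localisation bookkeeping — one first derives all the estimates up to the stopping times $\eta_n$ with constants uniform in $n$, then passes to the limit $n\to\infty$ by monotone convergence, which is precisely what makes the absorption of $\E\sup_t|\Delta_t|^2$ rigorous. The Bregman divergence plays no structural role here beyond being a nonnegative quantity appearing on the right-hand side.
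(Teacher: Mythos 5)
Your proof is correct; it is the classical $L^2$ stability estimate and reaches the stated bound. It does, however, execute it slightly differently from the paper: you apply It\^o's formula to $|\Delta_t|^2$, first run Gr\"onwall on $t\mapsto \E|\Delta_t|^2$, and then recover the supremum via the first-moment Burkholder--Davis--Gundy inequality combined with a Young-type absorption of $\tfrac14\E\sup_t|\Delta_t|^2$ into the left-hand side, which forces you to track localisation (or invoke the a priori finiteness of $\E\sup_t|\Delta_t|^2$, which is indeed available since $X(\pi),X(\pi')\in L^2(\Omega;C([0,T];\R^d))$ by Proposition~\ref{prop existence and uniqueness of the solution}) to make the absorption legitimate. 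The paper instead squares the integral representation of $\Delta_t$ directly, applies BDG at the level of second moments, $\E\big[\sup_{t\le t'}|\int_0^t(\sigma\text{-diff})\,dW|^2\big]\le C\,\E\int_0^{t'}|\sigma\text{-diff}|^2\,ds$, and runs Gr\"onwall once on $y(t)=\E[\sup_{s\le t}|\Delta_s|^2]$; this avoids both It\^o's formula and any absorption/localisation bookkeeping. Your route is equally valid and yields the same dependence of $C$ on $K$ and $T$; the only point you should state a bit more carefully is that the integrand $\Delta_s^\top(\sigma_s(X_s(\pi),\pi_s)-\sigma_s(X_s(\pi'),\pi'_s))$ of the It\^o-formula martingale term is only a product of two square-integrable quantities (hence a priori $L^1$), so the stopping-time argument (or the BDG bound showing the running maximum is integrable) is genuinely needed there, exactly as you indicate.
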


We further recall the   well-known Fefferman's inequality 
for BMO martingales (see \cite[Theorem  2.5]{kazamaki2006continuous}).
\begin{lemma}[Fefferman's inequality] \label{thr Fefferman's inequality}
If $M$ is a BMO martingale  and  $N$ is a martingale such that $\E\left[ \langle N\rangle_T^{\frac{1}{2}} \right]<\infty$, then
\begin{equation*}
\E\left[\int_0^T|d\langle M, N\rangle_t|\right]\le\sqrt{2}\|M\|_{\operatorname{BMO}}\,\E\left[\langle N\rangle_T^{\frac{1}{2}}\right]\,.
\end{equation*}
\end{lemma}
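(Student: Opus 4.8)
The plan is to prove the inequality by a weighted Kunita--Watanabe argument, with the weight tuned so that the factor built from $\langle N\rangle$ integrates out exactly while the factor built from $\langle M\rangle$ is absorbed by the $\operatorname{BMO}$ bound. All martingales are continuous in the present (Brownian) filtration, and $\langle N\rangle_0=0$ automatically; only $\langle\cdot\rangle$ and $\langle\cdot,\cdot\rangle$ enter, so there is no loss in assuming $M_0=N_0=0$.

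\emph{Step 1 (weighted Kunita--Watanabe).} For any nonnegative predictable process $h$ that is strictly positive on the support of $d\langle N\rangle$, the Kunita--Watanabe inequality gives, almost surely,
\begin{equation*}
\int_0^T|d\langle M,N\rangle_t| \le \left(\int_0^T h_t\,d\langle M\rangle_t\right)^{1/2}\left(\int_0^T h_t^{-1}\,d\langle N\rangle_t\right)^{1/2}.
\end{equation*}
I would take $h_t = 2\langle N\rangle_t^{1/2}$ (more precisely $h^\eps_t = 2(\langle N\rangle_t+\eps)^{1/2}$, then let $\eps\downarrow 0$; the degeneracy of $h$ on the flat parts of $t\mapsto\langle N\rangle_t$ is harmless since $|d\langle M,N\rangle_t|\le(d\langle M\rangle_t)^{1/2}(d\langle N\rangle_t)^{1/2}$, so $d\langle M,N\rangle$ is carried by the support of $d\langle N\rangle$). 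Then the fundamental theorem of calculus gives $\int_0^T h_t^{-1}\,d\langle N\rangle_t=\int_0^T\tfrac12\langle N\rangle_t^{-1/2}\,d\langle N\rangle_t=\langle N\rangle_T^{1/2}$, hence
\begin{equation*}
\int_0^T|d\langle M,N\rangle_t| \le \sqrt2\,\langle N\rangle_T^{1/4}\left(\int_0^T\langle N\rangle_t^{1/2}\,d\langle M\rangle_t\right)^{1/2}.
\end{equation*}

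\emph{Step 2 (expectation, Cauchy--Schwarz, $\operatorname{BMO}$).} Taking expectations and using Cauchy--Schwarz,
\begin{equation*}
\E\int_0^T|d\langle M,N\rangle_t| \le \sqrt2\,\big(\E[\langle N\rangle_T^{1/2}]\big)^{1/2}\left(\E\int_0^T\langle N\rangle_t^{1/2}\,d\langle M\rangle_t\right)^{1/2},
\end{equation*}
so it suffices to show $\E\int_0^T\langle N\rangle_t^{1/2}\,d\langle M\rangle_t\le\|M\|_{\operatorname{BMO}}^2\,\E[\langle N\rangle_T^{1/2}]$. Write $B_t:=\langle N\rangle_t^{1/2}$, which is continuous, adapted, nondecreasing, with $B_0=0$. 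I expand $B_t=\int_0^t dB_s$, exchange the order of integration by Tonelli, and then replace $\langle M\rangle_T-\langle M\rangle_s$ by its $\mathcal F_s$-conditional expectation — legitimate because $B$ is predictable (approximate $B$ from below by adapted step processes based on stopping times, for which this is the tower property):
\begin{equation*}
\E\int_0^T\langle N\rangle_t^{1/2}\,d\langle M\rangle_t = \E\int_0^T\big(\langle M\rangle_T-\langle M\rangle_s\big)\,dB_s = \E\int_0^T\E_s\big[\langle M\rangle_T-\langle M\rangle_s\big]\,dB_s \le \|M\|_{\operatorname{BMO}}^2\,\E[B_T],
\end{equation*}
the last step being the definition of $\|\cdot\|_{\operatorname{BMO}}$ with the deterministic stopping time $\eta\equiv s$. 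Since $\E[B_T]=\E[\langle N\rangle_T^{1/2}]$, substituting back yields $\E\int_0^T|d\langle M,N\rangle_t|\le\sqrt2\,\|M\|_{\operatorname{BMO}}\,\E[\langle N\rangle_T^{1/2}]$, as claimed.

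\emph{Where the work lies.} The only delicate points are (i) the degeneracy of the weight $h_t=2\langle N\rangle_t^{1/2}$ on the random set where $\langle N\rangle$ is constant, handled by the regularisation above; and (ii) the passage from $\int_0^T(\langle M\rangle_T-\langle M\rangle_s)\,dB_s$ to $\int_0^T\E_s[\langle M\rangle_T-\langle M\rangle_s]\,dB_s$, which must invoke the predictability of $B$ rather than a bare Fubini exchange — indeed, swapping the order of integration without conditioning merely returns the original integral. The remaining ingredients (the choice of weight, the two elementary one-dimensional integrations, and Cauchy--Schwarz) are routine.
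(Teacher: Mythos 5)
Your proof is correct. Note that the paper does not prove this lemma at all --- it simply cites \cite[Theorem 2.5]{kazamaki2006continuous} --- so there is no internal argument to compare against; what you have written is essentially the classical Getoor--Sharpe/Kazamaki proof: weighted Kunita--Watanabe with weight $2\langle N\rangle_t^{1/2}$ (regularised by $\eps$), Cauchy--Schwarz under the expectation, and the key estimate $\E\int_0^T\langle N\rangle_t^{1/2}\,d\langle M\rangle_t\le\|M\|_{\operatorname{BMO}}^2\,\E\big[\langle N\rangle_T^{1/2}\big]$ obtained by the pathwise Fubini exchange and projection onto $\mathcal F_s$. The two delicate points you single out are exactly the right ones, and your handling of them works. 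For the projection step, the cleanest elementary justification is to approximate $\int_0^T(\langle M\rangle_T-\langle M\rangle_s)\,dB_s$ by right-endpoint Riemann sums $\sum_i(\langle M\rangle_T-\langle M\rangle_{t_{i+1}})(B_{t_{i+1}}-B_{t_i})$: each term has expectation at most $\|M\|_{\operatorname{BMO}}^2\,\E[B_{t_{i+1}}-B_{t_i}]$ because the increment of $B$ is $\mathcal F_{t_{i+1}}$-measurable and the deterministic time $t_{i+1}$ is an admissible stopping time in the definition of the BMO norm; pathwise convergence of the sums plus Fatou then yields $\E\int_0^T(\langle M\rangle_T-\langle M\rangle_s)\,dB_s\le\|M\|_{\operatorname{BMO}}^2\,\E[B_T]$, which is all you need --- the exact projection identity (which requires the optional/predictable projection theorem) can be avoided. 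One small caveat: your opening restriction to continuous martingales is not forced by the paper's setting, since the filtration merely supports a Brownian motion and need not be the Brownian filtration; but it matches both the cited reference (whose BMO theory is for continuous martingales) and the only use made of the lemma in the paper, namely stochastic integrals against $W$ in Lemma~\ref{lemma:L2_BMO_L1}, so this is a harmless restriction rather than a gap.
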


 Fefferman's inequality yields the following lemma, which is important for the regularity analysis of the cost objective. 
\begin{lemma}
\label{lemma:L2_BMO_L1}
Let   $f, g:\Omega\times [0,T]\to \mathbb R $ be square-integrable   progressively measurable processes such that 
    $\|f\cdot W\| _{\operatorname{BMO}}<\infty $, where  $W$ is   a Brownian motion.
    Then $\mathbb E[\int_0^T |f_tg_t| d t]\le \sqrt{2} \|f\cdot W\| _{\operatorname{BMO}}
    \mathbb E\left[\left(\int_0^T |g_t|^2 d t \right)^{\frac{1}{2}}\right]$.
\end{lemma}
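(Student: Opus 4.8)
The plan is to reduce this to a direct application of Fefferman's inequality (Lemma \ref{thr Fefferman's inequality}), which is already available. The statement $\mathbb E[\int_0^T |f_t g_t|\, dt] \le \sqrt{2}\|f\cdot W\|_{\operatorname{BMO}}\,\mathbb E[(\int_0^T |g_t|^2\, dt)^{1/2}]$ is, morally, Fefferman applied to the BMO martingale $M = f\cdot W$ and the martingale $N = g \cdot W$; one then notes $d\langle M, N\rangle_t = f_t g_t\, dt$, so that $\int_0^T |d\langle M, N\rangle_t| = \int_0^T |f_t g_t|\, dt$, while $\langle N\rangle_T^{1/2} = (\int_0^T |g_t|^2\, dt)^{1/2}$.

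More precisely, first I would address an integrability technicality: Lemma \ref{thr Fefferman's inequality} requires $\mathbb E[\langle N\rangle_T^{1/2}] < \infty$, i.e. $\mathbb E[(\int_0^T |g_t|^2\, dt)^{1/2}] < \infty$. If this quantity is infinite, the claimed inequality holds trivially since the right-hand side is $+\infty$, so we may assume it is finite; this in particular makes $N = g\cdot W$ a genuine (uniformly integrable, in fact $H^1$) martingale with $N_0 = 0$, so Fefferman applies. Then I would write $d\langle M, N\rangle_t = f_t g_t\, dt$ using the definition of cross-variation recalled in the text ($\langle M, N\rangle_t = \frac14[\langle M+N\rangle_t - \langle M-N\rangle_t]$, and $\langle (f\pm g)\cdot W\rangle_t = \int_0^t |f_s \pm g_s|^2\, ds$, whose difference gives $4\int_0^t f_s g_s\, ds$). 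Hence $\int_0^T |d\langle M,N\rangle_t| = \int_0^T |f_t g_t|\, dt$ almost surely. Applying Lemma \ref{thr Fefferman's inequality} gives exactly
\begin{equation*}
\mathbb E\left[\int_0^T |f_t g_t|\, dt\right] \le \sqrt{2}\,\|f\cdot W\|_{\operatorname{BMO}}\,\mathbb E\left[\left(\int_0^T |g_t|^2\, dt\right)^{1/2}\right].
\end{equation*}

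I do not anticipate a serious obstacle here; the only point requiring a little care is confirming that $f\cdot W$ is genuinely a BMO martingale in the sense used by Lemma \ref{thr Fefferman's inequality} (which is automatic from the hypothesis $\|f\cdot W\|_{\operatorname{BMO}} < \infty$ together with $f_0 \cdot W_0 = 0$), and that $g\cdot W$ qualifies as an admissible $N$ — handled by the trivial-case reduction above. Everything else is the identification of the cross-variation with $f_t g_t\, dt$, which is a routine stochastic calculus computation. If one wanted to avoid even the cross-variation bookkeeping, an alternative is to invoke the ``energy inequality'' form of Fefferman directly, but routing through the stated Lemma \ref{thr Fefferman's inequality} is cleanest given what the paper has made available.
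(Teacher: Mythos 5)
Your proposal is correct and takes essentially the same route as the paper: both reduce the claim to a direct application of Fefferman's inequality (Lemma~\ref{thr Fefferman's inequality}), with the only cosmetic difference that you take $M=f\cdot W$, $N=g\cdot W$ and use $\int_0^T |d\langle M,N\rangle_t| = \int_0^T |f_tg_t|\,dt$, whereas the paper applies Fefferman to $|f|\cdot W$ and $|g|\cdot W$ together with $\||f|\cdot W\|_{\operatorname{BMO}}=\|f\cdot W\|_{\operatorname{BMO}}$. Your extra care about the integrability of $\langle N\rangle_T^{1/2}$ is sound (and in fact automatic from the assumed square-integrability of $g$).
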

\begin{proof}
    By  It\^o's isometry,
    $\mathbb E[\int_0^T |f_tg_t| d t]=\mathbb E[\int_0^Td\langle |f|\cdot W,  |g|\cdot W\rangle_t  ] $. The desired result follows from Lemma \ref{thr Fefferman's inequality} and   the fact that
    $\||f|\cdot W\| _{\operatorname{BMO}}=\|f\cdot W\| _{\operatorname{BMO}}$.
\end{proof}

     \begin{remark}
          Lemma \ref{lemma:L2_BMO_L1} 
         leverages the BMO regularity of $f\cdot W$
         and 
         bounds the $L^1$-norm of $(\omega, t)\mapsto f_tg_t(\omega)$  using the $L^2$-norm of $g$ in time  and $L^1$-norm in the probability space. 
         This norm on $g$
         is weaker than the standard $L^2$-norm   
         $ \mathbb E\left[ \int_0^T |g_t|^2 d t \right]^{{1}/{2}}$
         arising from  the Cauchy--Schwarz inequality, and is crucial for our regularity estimates, particularly for  the bound of  $I_4$ in Lemma \ref{lem stability for Y and Z}. 
         
         A similar estimate cannot  be obtained by replacing   Fefferman's inequality with  the Kunita--Watanabe inequality, stated as follows: 
 \begin{equation*}
 \int_0^T|d\langle M, N\rangle_t| \le 
 \left(\int_0^T d\langle M\rangle_t\right)^{1/2}
 \left(\int_0^T d\langle N\rangle_t\right)^{1/2}\,,
\end{equation*}
since   after taking expectations, it is unclear how to isolate the contribution of 
$M$
  without invoking its BMO norm. Applying the Cauchy--Schwarz inequality in this context leads to
  the  appearance of
  $\mathbb E[ \langle N\rangle_T]^{1/2}$, which coincides with  the undesired
$L^2$-norm of $g$, 
as discussed above. 
     \end{remark}

Based on Lemmas \ref{lem Y Z bounded}, 
\ref{lem standard sde estimate} and
\ref{thr Fefferman's inequality}, 
we prove that the adjoint processes depend Lipschitz continuously on $\pi$ in terms of the Bregman divergence.

\begin{lemma}\label{lem stability for Y and Z}
Suppose Assumption \ref{assumption controlled SDE for modified MSA}, \ref{assum:regularity_f_g} and \ref{assumption:spatial_derivative} hold. 
Then there exists   $C > 0$ such that for all $\pi,\pi'\in\mathcal{A}_{\mathfrak C}$,
\begin{equation*}
\E\sup_{t\in[0,T]} |Y_t(\pi)-Y_t(\pi')|^2 + \E\int_0^T |Z_t(\pi)-Z_t(\pi')|^2dt \le C\E\int_0^T D_h(\pi_t|\pi'_t)\,dt\,.
\end{equation*}
\end{lemma}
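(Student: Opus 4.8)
The plan is to prove Lemma~\ref{lem stability for Y and Z} via a standard BSDE stability argument, but carried out carefully so that the $L^2$-norm in time of the $Z$-difference (arising from a quadratic term involving $D_x\sigma$) is controlled using the BMO bound of Lemma~\ref{lem Y Z bounded} together with Lemma~\ref{lemma:L2_BMO_L1}, rather than by a crude Cauchy--Schwarz estimate. Write $\delta X = X(\pi)-X(\pi')$, $\delta Y = Y(\pi)-Y(\pi')$, $\delta Z = Z(\pi)-Z(\pi')$, and let $\delta b_s, \delta\sigma_s, \delta f_s$ denote the differences of the corresponding drivers evaluated along the two controls. From \eqref{eq: adjoint equation}, $(\delta Y,\delta Z)$ solves a BSDE with terminal condition $(D_xg)(X_T(\pi))-(D_xg)(X_T(\pi'))$ and generator obtained by differencing $(D_xH^0)$. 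Using Assumption~\ref{assumption:spatial_derivative} and the boundedness of $(D_x\phi)$ from Assumption~\ref{assum:regularity_f_g}, the generator difference is bounded in absolute value by $C(|\delta X_s| + D_h(\pi_s|\pi'_s)^{1/2} + |\delta Y_s| + |\delta Z_s|)$, where the $D_h^{1/2}$ term collects the contributions from the measure arguments of $D_xb$, $D_x\sigma$, $D_xf$, and the $|\delta Z_s|$ term comes from $(D_x\sigma_s)(X_s(\pi),\pi_s)\delta Z_s$.

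First I would apply It\^o's formula to $s\mapsto |\delta Y_s|^2$ on $[t,T]$, take expectations, and use the boundedness of $D_xg$ and Lemma~\ref{lem standard sde estimate} to handle the terminal term by $C\,\E\int_0^T D_h(\pi_s|\pi'_s)\,ds$. The cross term $\int_t^T \delta Y_s^\top(\text{generator difference})\,ds$ is then split: the contributions involving $|\delta X_s|$, $D_h^{1/2}$ and $|\delta Y_s|$ are absorbed by Young's inequality and Gr\"onwall, using Lemma~\ref{lem standard sde estimate} for the $\delta X$ part; the delicate term is $\E\int_t^T \delta Y_s^\top (D_x\sigma_s)(X_s(\pi),\pi_s)\,\delta Z_s\,ds$, which I would bound by $\eps\,\E\int_t^T|\delta Z_s|^2\,ds + C_\eps\,\E\int_t^T|\delta Y_s|^2\,ds$ and absorb the $\delta Z$ part into the left side. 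This yields the bound on $\E\sup_t|\delta Y_t|^2$ and on $\E\int_0^T|\delta Z_s|^2\,ds$ once we also bound the $\delta Z$-norm directly from the same It\^o expansion (the $\int_t^T|\delta Z_s|^2$ term appears with a positive sign on the left).

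The step I expect to be the main obstacle is the term $I_4$ (in the paper's notation) coming from the \emph{measure-difference} contribution of the diffusion coefficient, i.e.\ the term where $\delta\sigma_s$ enters quadratically through the It\^o correction of $|\delta Y_s|^2$ against $\delta Z_s$, or more precisely the term $\E\int_0^T |\delta Z_s|\cdot|\,(\tfrac{\delta\sigma_s}{\delta m}\text{-type difference})\,|\,ds$. A naive Cauchy--Schwarz here produces $\E\big[\int_0^T|\delta Z_s|^2\,ds\big]^{1/2}\cdot\E\big[\int_0^T D_h(\pi_s|\pi'_s)\,ds\big]^{1/2}$, which is \emph{not} bounded by $C\,\E\int_0^T D_h(\pi_s|\pi'_s)\,ds$ because the first factor is only $\E\int_0^T|\delta Z_s|^2\,ds \le C\,\E\int_0^T D_h\,ds$ after the estimate is already closed. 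The resolution is to use Lemma~\ref{lemma:L2_BMO_L1} with $f = |\delta Z|$ (whose stochastic integral has finite BMO norm, bounded uniformly in $\pi$ by Lemma~\ref{lem Y Z bounded} and the triangle inequality) and $g$ equal to the $D_h^{1/2}$-type integrand; this gives $\sqrt2\,\|\delta Z\cdot W\|_{\operatorname{BMO}}\,\E\big[(\int_0^T D_h(\pi_s|\pi'_s)\,ds)^{1/2}\big]$, and since $\|\delta Z\cdot W\|_{\operatorname{BMO}}$ is itself dominated (after Young) by the square root of the quantity we are estimating plus a multiple of $\E\int_0^T D_h\,ds$, a final Young's inequality closes the bound. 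Assembling the $\delta Y$ and $\delta Z$ estimates via Gr\"onwall's inequality then gives the claim with a constant $C$ depending only on $T$, $K$, and $\sup_{\pi}\|Z(\pi)\cdot W\|_{\operatorname{BMO}}$.
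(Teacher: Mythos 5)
Your overall strategy --- It\^o's formula for $|\delta Y|^2$, Young/Gr\"onwall, and invoking the uniform BMO bound of Lemma~\ref{lem Y Z bounded} together with Lemma~\ref{lemma:L2_BMO_L1} because a naive Cauchy--Schwarz fails --- is indeed the paper's route, but the genuinely delicate term is not the one you describe, and your estimate of it does not close. In the BSDE for $\delta Y=Y(\pi)-Y(\pi')$ the generator difference is \emph{not} bounded by $C(|\delta X_s|+D_h(\pi_s|\pi'_s)^{1/2}+|\delta Y_s|+|\delta Z_s|)$: it contains the terms $\big((D_xb_s)(X_s(\pi),\pi_s)-(D_xb_s)(X_s(\pi'),\pi'_s)\big)Y_s(\pi')$ and $\operatorname{tr}\big[\big((D_x\sigma_s)(X_s(\pi),\pi_s)-(D_x\sigma_s)(X_s(\pi'),\pi'_s)\big)Z_s(\pi')\big]$, in which the multiplicative factors are the adjoint processes of the \emph{reference} control, not differences. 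The factor $Y(\pi')$ is harmless because it is $\mathbb{H}^\infty$-bounded by Lemma~\ref{lem Y Z bounded}, but $Z(\pi')$ is only BMO, so the problematic term (the paper's $I_4$) is $\E\int_0^T|\delta Y_t|\,\big|(D_x\sigma_t)(X_t(\pi),\pi_t)-(D_x\sigma_t)(X_t(\pi'),\pi'_t)\big|\,|Z_t(\pi')|\,dt$; a term of the form $\E\int_0^T|\delta Z_t|\,D_h(\pi_t|\pi'_t)^{1/2}\,dt$ does not arise from the It\^o expansion at all.

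Moreover, the closure you propose for your version of the term does not give the lemma. Applying Lemma~\ref{lemma:L2_BMO_L1} with $f=|\delta Z|$ and $g$ of $D_h^{1/2}$-type yields, even with the correct uniform bound $\|\delta Z\cdot W\|_{\operatorname{BMO}}\le 2\hat C$, a quantity of order $\E\big[(\int_0^T D_h(\pi_t|\pi'_t)\,dt)^{1/2}\big]\le\big(\E\int_0^T D_h(\pi_t|\pi'_t)\,dt\big)^{1/2}$, which cannot be dominated by $C\,\E\int_0^T D_h(\pi_t|\pi'_t)\,dt$ when the divergence is small; a final Young step only produces a bound of the form $C\big(1+\E\int_0^T D_h\,dt\big)$, losing exactly the linear dependence asserted in the lemma and needed downstream (e.g.\ in Lemma~\ref{lem regularity of the flat deriv of hamiltonian}, where the estimate is applied with $\pi^\varepsilon$ and the factor $\varepsilon^{1/2}$ must survive). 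Your auxiliary claim that $\|\delta Z\cdot W\|_{\operatorname{BMO}}$ is dominated by the square root of the quantity being estimated is also backwards: the BMO norm controls, but is not controlled by, $\E\int_0^T|\delta Z_t|^2dt$. The correct mechanism, as in the paper, is to apply Lemma~\ref{lemma:L2_BMO_L1} with $f=Z(\pi')$ and $g_t=|\delta Y_t|\,\big|(D_x\sigma_t)(X_t(\pi),\pi_t)-(D_x\sigma_t)(X_t(\pi'),\pi'_t)\big|$, i.e.\ keep the factor $|\delta Y|$ inside $g$: pulling out $\sup_t|\delta Y_t|$ and using Young gives $\varepsilon\hat C\,\E\sup_t|\delta Y_t|^2+\varepsilon^{-1}\hat C K\,\E\int_0^T\big(|\delta X_t|^2+D_h(\pi_t|\pi'_t)\big)dt$, where squaring the $D_h^{1/2}$-type quantity restores the first power of $D_h$ via Assumption~\ref{assumption:spatial_derivative} and Lemma~\ref{lem standard sde estimate}; the leftover $\varepsilon\,\E\sup_t|\delta Y_t|^2$ must then be absorbed in a second step in which $\E\sup_t|\delta Y_t|^2$ is itself estimated via the Burkholder--Davis--Gundy inequality and $\varepsilon$ is chosen small. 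Your write-up contains neither the $|\delta Y|$-inside-$g$ device nor this absorption step, so as it stands the argument has a genuine gap.
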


\begin{proof}
Fix $\pi,\pi'\in\mathcal{A}_{\mathfrak C}$. 
Let  $\beta>0$ be a constant whose value will be specified later.
By It\^o's formula, 
\begin{equation}
\label{eq:bsdes_proof_before_BDG}
\begin{split}
&e^{\beta s}|Y_s(\pi)-Y_s(\pi')|^2 
+\int_s^T e^{\beta t} \left(|Z_t(\pi)-Z_t(\pi')|^2+\beta|Y_t(\pi)-Y_t(\pi')|^2\right)dt
\\
&
= e^{\beta T}| (D_xg )(X_T(\pi))- (D_x g )(X_T(\pi'))|^2   
-2\int_s^T e^{\beta t} (Y_t(\pi)-Y_t(\pi'))^\top (Z_t(\pi)-Z_t(\pi')) dW_t\\
&\quad +\int_s^T e^{\beta t}\bigg(2(Y_t(\pi)-Y_t(\pi'))^\top\left(\left(D_x H^{0}_t\right)(\Theta_t(\pi), \pi_t)-\left(D_xH^{0}_t\right)(\Theta_t(\pi'), \pi'_t)\right)\,.
\end{split}
\end{equation}
The proof will be divided into two steps. 
The first step is to
choose a sufficiently large $\beta$
to absorb the terms involving $|Z_t(\pi)-Z_t(\pi')|^2$ and $|Y_t(\pi)-Y_t(\pi')|^2$
arising from upper bounding 
 the right-hand side of~\eqref{eq:bsdes_proof_before_BDG},
and to 
establish that for any $\varepsilon > 0$,
\begin{equation}
\label{eq: final diff Y plus diff Z new}
\begin{split}
&\E \int_0^T \bigg(\frac12|Z_t(\pi)-Z_t(\pi')|^2 + |Y_t(\pi)-Y_t(\pi')|^2\bigg) \,dt
\\
&
\le  C(1+\varepsilon^{-1})\E \int_0^T D_h(\pi_t|\pi'_t)\,dt 
+ \varepsilon \hat C\E \left[ \sup_{t\in[0,T]} |Y_t(\pi)-Y_t(\pi')|^2\right]\,,
\end{split}
\end{equation}
where $\hat C:=  \sup_{\pi \in \mathcal A_{\mathfrak C}} \|Z(\pi)\cdot W\|_{\text{BMO}} < \infty$, due to Lemma~\ref{lem Y Z bounded}, and $C$ is a constant that depends on $K$, $T$, $d$ and $d'$ but is independent of $\varepsilon$.
In the second step we will take the supremum in~\eqref{eq:bsdes_proof_before_BDG}, use Burkholder--David--Gundy inequality and the estimate~\eqref{eq: final diff Y plus diff Z new} with a sufficiently small  $\varepsilon>0$ to  conclude the proof.

{\em Step 1:} 
From~\eqref{eq:bsdes_proof_before_BDG} we get that
\begin{equation}
\begin{split}
& \int_0^T e^{\beta t}\Big[|Z_t(\pi)-Z_t(\pi')|^2 + \beta|Y_t(\pi) - Y_t(\pi')|^2\Big]\,dt \\
& \leq e^{\beta T}| (D_xg )(X_T(\pi))- (D_x g )(X_T(\pi'))|^2   -2\int_0^T e^{\beta t}(Y_t(\pi)-Y_t(\pi'))^\top (Z_t(\pi)-Z_t(\pi'))  dW_t\\
&\quad +\int_0^T e^{\beta t}2(Y_t(\pi)-Y_t(\pi'))^\top \Big(\left(D_x H^{0}_t\right)(\Theta_t(\pi), \pi_t)-\left(D_xH^{0}_t\right)(\Theta_t(\pi'), \pi'_t)\Big)\,dt\,.
\end{split}
\end{equation}
Observe that 
\[
\mathbb E\int_0^T e^{2\beta s}|Y_s(\pi)-Y_s(\pi')|^2 |Z_s(\pi)-Z_s(\pi')|^2\,ds \leq 2e^{2\beta T}\max(\|Y(\pi)\|_{\mathbb H^\infty}^2, \|Y(\pi')\|_{\mathbb H^\infty}^2)\| Z(\pi)-Z(\pi')\|_{\mathbb H^2}^2
\]
which is finite due to Lemma~\ref{lem Y Z bounded} and Proposition~\ref{prop existence and uniqueness of the solution}.
Thus
$\int_0^t e^{\beta s} (Y_s(\pi)-Y_s(\pi'))^\top (Z_s(\pi)-Z_s(\pi')) dW_s$ is a martingale.  
Hence, taking expectation yields   that 
\begin{equation}
\label{eq diff of Y and Z}
\begin{split}
& \mathbb E \int_0^T e^{\beta t}\Big[|Z_t(\pi)-Z_t(\pi')|^2 + \beta|Y_t(\pi) - Y_t(\pi')|^2\Big]\,dt 
\leq e^{\beta T}\mathbb E|(D_x g)(X_T(\pi))- (D_x g )(X_T(\pi'))|^2  
\\
&\quad + \mathbb E\int_0^T e^{\beta t}2(Y_t(\pi)-Y_t(\pi'))^\top\Big(\left(D_x H^{0}_t\right)(\Theta_t(\pi), \pi_t)-\left(D_xH^{0}_t\right)(\Theta_t(\pi'), \pi'_t)\Big)\,dt\,.
\end{split}
\end{equation}
To derive an upper bound of the right-hand side of~\eqref{eq diff of Y and Z}, observe that  
\begin{equation*}
\begin{split}
&\left(D_x H^{0}_t\right)(\Theta_t(\pi), \pi_t)-\left(D_xH^{0}_t\right)(\Theta_t(\pi'), \pi'_t)\\
&=\left(D_x b_t\right)(X_t(\pi), \pi_t)  (Y_t(\pi)-Y_t(\pi') )
+ \text{tr}\Big[(D_x \sigma_t) (X_t(\pi), \pi_t)   (Z_t(\pi)-Z_t(\pi') ) \Big] 
\\
&\quad +
\big( (D_x b_t )(X_t(\pi), \pi_t)- (D_x b_t )(X_t(\pi'), \pi'_t)  \big) Y_t(\pi') \\
&\quad + \text{tr}\Big[\Big(\left(D_x \sigma_t\right)(X_t(\pi), \pi_t)-\left(D_x \sigma_t\right)(X_t(\pi'), \pi'_t)\Big)    Z_t(\pi')\Big]
+ \left(D_x f_t\right)(X_t(\pi), \pi_t)-\left(D_x f_t\right)(X_t(\pi'), \pi'_t)\,.
\end{split}
\end{equation*}
Hence  by Assumptions~\ref{assumption controlled SDE for modified MSA}, \ref{assum:regularity_f_g} and \ref{assumption:spatial_derivative} and by Young's inequality, for all $\gamma>0$,
\begin{equation}
\label{eq diff y diff H}
\begin{split}
&\E \left[\int_0^T 2e^{\beta t}\Big|(Y_t(\pi)-Y_t(\pi'))^\top ( (D_x H^{0}_t )(\Theta_t(\pi), \pi_t)- (D_xH^{0}_t )(\Theta_t(\pi'), \pi'_t))\Big|\,dt\right]\\
&\leq I_1+I_2+I_3+I_4+I_5\,,
\end{split}
\end{equation}
where
\begin{equation}
\label{eq bsde estimate all the I123-terms}
\begin{split}
I_1&\coloneqq   \E \left[\int_0^T 2e^{\beta t}|(D_x b)(X(\pi),\pi)| \, |Y_t(\pi)-Y_t(\pi')|^2 \, dt\right]\,,	
\\
I_2 &\coloneqq \E\left[\int_0^T e^{\beta t} | (D_x\sigma )(X(\pi),\pi)| \left(\gamma|Z_t(\pi)-Z_t(\pi')|^2 +\gamma^{-1}|Y_t(\pi)-Y_t(\pi')|^2\right)\,dt\right]\,,
\\
I_3 &\coloneqq \E\left[\int_0^Te^{\beta t}\Big(\gamma^{-1}|Y_t(\pi)-Y_t(\pi')|^2+\gamma |  (D_x f_t )(X_t(\pi), \pi_t)- (D_x f_t )(X_t(\pi'), \pi'_t)|^2\Big)dt\right]\,,
\end{split}
\end{equation}
and
\begin{equation*}
\begin{split}
I_4 &\coloneqq \E\left[ \int_0^T 2e^{\beta t} \Big|(Y_t(\pi)-Y_t(\pi'))^\top 
\text{tr}\big[\big( (D_x \sigma_t )(X_t(\pi), \pi_t)- (D_x \sigma_t )(X_t(\pi'), \pi'_t)\big)   Z_t(\pi') \big]\Big|
dt	\right]\,,	
\\
I_5 &\coloneqq \E\left[\int_0^T2e^{\beta t}\Big|(Y_t(\pi)-Y_t(\pi'))^\top \Big( (D_x b_t )(X_t(\pi), \pi_t)- (D_x b_t )(X_t(\pi'), \pi'_t)\Big)Y_t(\pi')\Big|\,dt\right]\,.	
\end{split}
\end{equation*}
We focus on the  most troublesome terms   $I_4$ and $I_5$.
Observe that 
by Lemma \ref{lemma:L2_BMO_L1}, 
\begin{align*}
& I_4 \leq 2 \sum_{j=1}^d  
\sum_{i=1}^{d'}\sum_{k=1}^d \E\int_0^T e^{\beta t} \Big|(Y^j_t(\pi)-Y^j_t(\pi'))\big( (D_{x_j} \sigma^{ki}_t )(X_t(\pi), \pi_t)- (D_{x_j} \sigma^{ki}_t )(X_t(\pi'), \pi'_t)\big) \Big|\, \Big| Z^{ki}_t(\pi')\Big|
dt
\\
&  
\leq \sum_{j=1}^d  
\sum_{i=1}^{d'}\sum_{k=1}^d \|Z^{ki}(\pi') \!\cdot\! W^i \|_{\text{BMO}} \E \left[ \bigg(\int_0^T \!\!\! e^{2\beta t} \big|Y_t^j(\pi)-Y_t^j(\pi')\big|^2 \big|(D_{x_j} \sigma_t^{ki} )(X_t(\pi), \pi)- (D_{x_j} \sigma_t^{ki} )(X_t(\pi'), \pi')\big|^2dt\bigg)^{\frac{1}{2}}\!\right]
\\
&
\leq \|Z(\pi')\!\cdot\! W \|_{\text{BMO}} \sum_{j=1}^d  
\sum_{i=1}^{d'}\sum_{k=1}^d \E \bigg[ \bigg(\int_0^T \!\! e^{2\beta t} \big|Y_t^j(\pi)-Y_t^j(\pi')\big|^2 \big|(D_{x_j} \sigma_t^{ki} )(X_t(\pi), \pi)- (D_{x_j} \sigma_t^{ki} )(X_t(\pi'), \pi')\big|^2\,dt\bigg)^{\frac{1}{2}}\bigg]\,.
\end{align*}
By Young's inequality,   for all 
  $i\in \{1,\ldots, d'\}$, $ j,k\in \{1,\ldots, d\}$, and 
$\varepsilon > 0$,
\[
\begin{split}
  &  \E \bigg[ \sup_{0\leq t\leq T} \big|Y_t^j(\pi)-Y_t^j(\pi')\big| \bigg(\int_0^T e^{2\beta t} \big|(D_{x_j} \sigma_t^{ki} )(X_t(\pi), \pi)- (D_{x_j} \sigma_t^{ki} )(X_t(\pi'), \pi')\big|^2 dt\bigg)^{1/2}\bigg]\\ 
& \leq  \E \bigg[ \varepsilon \sup_{0\leq t\leq T} \big|Y_t^j(\pi)-Y_t^j(\pi')\big|^2 + \varepsilon^{-1} \int_0^T e^{2\beta t} \big|(D_{x_j} \sigma_t^{ki} )(X_t(\pi), \pi)- (D_{x_j} \sigma_t^{ki} )(X_t(\pi'), \pi')\big|^2\,dt\bigg] \,.
\end{split}
\]
Setting  $\hat C:=  \sup_{\pi \in \mathcal A_{\mathfrak C}} \|Z(\pi)\cdot W\|_{\text{BMO}} < \infty$ (see   Lemma~\ref{lem Y Z bounded})
and taking the summation over $i,j,k$ yield
\begin{align}
\label{eq diff Y Z diff sigma}
\begin{split}
I_4 & 
 \le  
  \varepsilon  \hat C d' d \E \bigg[  \sup_{0\leq t\leq T} \big|Y_t(\pi)-Y_t(\pi')\big|^2\bigg] + \hat C 
\E \int_0^T e^{2\beta t} \big|(D_{x} \sigma_t)(X_t(\pi), \pi)- (D_x \sigma_t )(X_t(\pi'), \pi')\big|^2\,dt 
\\
&\leq
 \varepsilon  \hat C d' d \E \left[ \sup_{t\in[0,T]} |Y_t(\pi)-Y_t(\pi')|^2\right] + K \hat C\varepsilon ^{-1} \E \left[ \int_0^Te^{2\beta t}  \big[|X_t(\pi) - X_t(\pi')|^2 + D_h(\pi_t|\pi'_t)\big] dt \right]\,.
\end{split}
\end{align}
where the last inequality used the regularity assumption of $D_x\sigma$.
Hereafter, 
we denote by 
  $C\ge 0$ a generic constant that depends  on $K$, $T$, $d$, $d'$ but is independent of $\beta, \gamma$ and $\varepsilon$, and    may vary from line to line. 
Again, using Lemma~\ref{lem Y Z bounded}, Young's inequality and   Assumption~\ref{assumption:spatial_derivative}, 
\begin{equation}\label{eq diffY Y diffb}
\begin{split}
|I_5| 
& \le \|Y(\pi')\|_{\mathbb{H}^{\infty}}\E \bigg[ \int_0^T e^{\beta t}2|Y_t(\pi)-Y_t(\pi')| | (D_x b_t )(X_t(\pi), \pi_t)- (D_x b_t )(X_t(\pi'), \pi'_t) |\,dt \bigg]\\
& \le C\E \bigg[ \int_0^T e^{\beta t}\left(  |Y_t(\pi)-Y_t(\pi')|^2+  | (D_x b_t )(X_t(\pi), \pi_t)- (D_x b_t )(X_t(\pi'), \pi'_t) |^2\right)\,dt \bigg]\\
&\le C\E \bigg[\int_0^T e^{\beta t}\left( |Y_t(\pi)-Y_t(\pi')|^2+ K(|X_t(\pi)-X_t(\pi')|^2+ D_h(\pi_t|\pi'_t) \right)\,dt \bigg]\,.
\end{split}
\end{equation}
Hence using~\eqref{eq diff y diff H},~\eqref{eq bsde estimate all the I123-terms},~\eqref{eq diff Y Z diff sigma}, and~\eqref{eq diffY Y diffb} and Assumption~\ref{assum:regularity_f_g},   
for all $\beta\ge 0$ and $  \gamma, \varepsilon>0$,
\begin{equation}
\label{eq: final diff Y plus diff Z}
\begin{split}
&\E \left[\int_0^T 2e^{\beta t}\Big|(Y_t(\pi)-Y_t(\pi'))^\top ( (D_x H^{0}_t )(\Theta_t(\pi), \pi_t)- (D_xH^{0}_t )(\Theta_t(\pi'), \pi'_t))\Big|\,dt\right]\\
& \leq C \E \left[\int_0^T 2e^{\beta t}|Y_t(\pi)-Y_t(\pi')|^2\,dt\right] \\
& \quad + C\E\left[\int_0^T e^{\beta t}\left(\gamma|Z_t(\pi)-Z_t(\pi')|^2 +\gamma^{-1}|Y_t(\pi)-Y_t(\pi')|^2\right)\,dt\right] \\
& \quad + \E\left[\int_0^Te^{\beta t}\Big(\gamma^{-1}|Y_t(\pi)-Y_t(\pi')|^2+ C \gamma \big[|X_t(\pi)-X_t(\pi')|^2 + |D_h(\pi_t|\pi'_t)\big]\Big)dt\right] \\
& \quad + \varepsilon d' d  \hat C\E \left[ \sup_{t\in[0,T]} |Y_t(\pi)-Y_t(\pi')|^2\right] +   C\varepsilon ^{-1} \E \left[ \int_0^Te^{2\beta t} \big[|X_t(\pi) - X_t(\pi')|^2 + D_h(\pi_t|\pi'_t)\big]  dt \right] \\
& \quad + C\E \bigg[\int_0^T e^{\beta t}\left( |Y_t(\pi)-Y_t(\pi')|^2+ D_h(\pi_t|\pi'_t)\right)\,dt \bigg]\,.    
\end{split}
\end{equation}
Therefore, using~\eqref{eq: final diff Y plus diff Z} in~\eqref{eq diff of Y and Z} and collecting terms along with Lemma~\ref{lem standard sde estimate} shows that
\begin{equation}
\label{eq:lipschitz_estimate1}
\begin{split}
&\E \left[\int_0^T e^{\beta t}\big[|Z_t(\pi)-Z_t(\pi')|^2 + \beta |Y_t(\pi)-Y_t(\pi')|^2 \big]\,dt\right] \\
&\le  C \E \left[\int_0^T  e^{\beta t}(1 +  \gamma^{-1} )|Y_t(\pi)-Y_t(\pi')|^2\,dt\right]  + C\E\left[\int_0^T e^{\beta t}\left(\gamma|Z_t(\pi)-Z_t(\pi')|^2 \right)\,dt\right] \\
& \quad 
+ C (1+\gamma+ \varepsilon^{-1}) e^{2\beta T} \E\left[\int_0^T  D_h(\pi_t|\pi'_t)dt\right]  + \varepsilon  d' d  \hat C\E \left[ \sup_{t\in[0,T]} |Y_t(\pi)-Y_t(\pi')|^2\right]\,,
\end{split}
\end{equation}
Taking the constant $C$  in \eqref{eq:lipschitz_estimate1},
and 
choosing $ \gamma=1/(2C)$ and   $\beta = 1 +C(1  + C)$ yield
for all $\varepsilon>0$,
\begin{equation}
\label{eq: pre final diff Y plus diff Z new}
\begin{split}
&\E \left[\int_0^T \bigg(\frac12|Z_t(\pi)-Z_t(\pi')|^2 + |Y_t(\pi)-Y_t(\pi')|^2\bigg) \,dt\right]
\\
&
\le  C\left(1+\frac{1}{2C}+\varepsilon^{-1}\right)\E \int_0^T D_h(\pi_t|\pi'_t)\,dt 
+ \varepsilon d' d  \hat C \E \left[ \sup_{t\in[0,T]} |Y_t(\pi)-Y_t(\pi')|^2\right]\,.
\end{split}
\end{equation}
This shows~\eqref{eq: final diff Y plus diff Z new} holds thus concluding Step 1.

{\em Step 2:}
We return to~\eqref{eq:bsdes_proof_before_BDG} and  
upper bound   $\mathbb E\left[ \sup_{s\in [0,T]}  |Y_s(\pi)-Y_s(\pi')|^2   \right]$.
For each $s\in [0,T]$,
\begin{equation}
\begin{split}
& e^{\beta s}|Y_s(\pi)-Y_s(\pi')|^2 + \int_s^T e^{\beta t}\Big(|Z_t(\pi)-Z_t(\pi')|^2 + \beta|Y_t(\pi)-Y_t(\pi')|^2\Big)\,dt
\\
&
\leq e^{\beta T}| (D_xg )(X_T(\pi))- (D_x g )(X_T(\pi'))|^2  
\\
&\quad 
+2\sup_{s\in[0,T]}\bigg| \int_s^T e^{\beta t}(Y_t(\pi)-Y_t(\pi'))^\top (Z_t(\pi)-Z_t(\pi')) dW_t\bigg|\\
&\quad +  \sup_{s\in[0,T]}\bigg|\int_s^T e^{\beta t} 2(Y_t(\pi)-Y_t(\pi'))^\top \Big( (D_x H^{0}_t )(\Theta_t(\pi), \pi_t)- (D_xH^{0}_t )(\Theta_t(\pi'), \pi'_t) \Big)\,dt\bigg|\,.	
\end{split}
\end{equation}
Taking the supremum over $s\in [0,T]$ and  then taking the expectation yield 
\begin{equation}
\begin{split}
& \mathbb E\left[ \sup_{s\in [0,T]} e^{\beta s}|Y_s(\pi)-Y_s(\pi')|^2   \right]
\leq  e^{\beta T} \mathbb E\Big[|(D_xg )(X_T(\pi))- (D_x g )(X_T(\pi'))|^2\Big]  
\\
&\qquad \qquad \qquad + 2\mathbb E \left[\sup_{s\in[0,T]}\bigg| \int_s^T e^{\beta t} (Y_t(\pi)-Y_t(\pi')) ^\top (Z_t(\pi)-Z_t(\pi')) dW_t\bigg|\right]\\
&\qquad \qquad \qquad + \mathbb E\int_0^T e^{\beta t} 2 \big| (Y_t(\pi)-Y_t(\pi'))^\top \Big( (D_x H^{0}_t )(\Theta_t(\pi), \pi_t)- (D_xH^{0}_t )(\Theta_t(\pi'), \pi'_t) \Big)  \big|\,dt\,.	
\end{split}
\end{equation}
Setting $\beta=0$ and 
using~\eqref{eq: final diff Y plus diff Z}, Lemma~\ref{lem standard sde estimate} and the conclusion of Step 1, which is~\eqref{eq: final diff Y plus diff Z new}, shows that
\begin{equation}
\label{eq proof of lem 3.4 step 2 before BDG}
\begin{split}
 \mathbb E \left[\sup_{s\in [0,T]}  |Y_s(\pi)-Y_s(\pi')|^2  \right] 
& \leq 2\mathbb E  \left[ \sup_{s\in[0,T]}\bigg| \int_s^T   (Y_t(\pi)-Y_t(\pi'))^\top (Z_t(\pi)-Z_t(\pi'))dW_t\bigg|\right] \\
&\quad + C(1+\varepsilon^{-1})\mathbb E  \left[ \int_0^T D_h(\pi_t|\pi'_t)\,dt\right] + \varepsilon \hat C \E \left[ \sup_{t\in[0,T]} |Y_t(\pi)-Y_t(\pi')|^2\right]\,.
\end{split}
\end{equation}
By the  Burkholder--Davis--Gundy inequality and Young's inequality,    for all $\gamma>0$,
\[
\begin{split}
& \mathbb E\left[\sup_{s\in[0,T]}\bigg| \int_s^T  (Y_t(\pi)-Y_t(\pi'))^\top (Z_t(\pi)-Z_t(\pi')) dW_t\bigg|\right]
\leq C  \mathbb E \Bigg[ \bigg(\int_0^T   |Z_t(\pi)-Z_t(\pi')|^2|Y_t(\pi)-Y_t(\pi')|^2\,dt\bigg)^{1/2} \Bigg]\\
& \leq C  \mathbb E \Bigg[ \sup_{t\in [0,T]}|Y_t(\pi)-Y_t(\pi')|  \bigg(\int_0^T   |Z_t(\pi)-Z_t(\pi')|^2\,dt\bigg)^{1/2} \Bigg]\\
& \leq C\left(\gamma
\mathbb E \left[ \sup_{s\in [0,T]}|Y_s(\pi)-Y_s(\pi')|^2\right]
+ \gamma^{-1} 
\mathbb E \left[\int_0^T   |Z_t(\pi)-Z_t(\pi')|^2\,dt \right] \right)\,.
\end{split}
\]
Choosing $\gamma := 1/(2C)$, using the above estimate in~\eqref{eq proof of lem 3.4 step 2 before BDG} yield, for all $\varepsilon > 0$, that
\[
\begin{split}
& \frac12\mathbb E\left[ \sup_{s\in [0,T]}  |Y_s(\pi)-Y_s(\pi')|^2\right]\\
& \le C  \mathbb E\left[ \int_0^T  |Z_t(\pi)-Z_t(\pi')|^2\,dt \right] + C(1+\varepsilon^{-1})\mathbb E\left[   \int_0^T D_h(\pi_t|\pi'_t)\,dt \right] + \varepsilon \hat C\E \left[ \sup_{t\in[0,T]} |Y_t(\pi)-Y_t(\pi')|^2\right]
\end{split}
\]
Using the conclusion of Step 1, which is~\eqref{eq: final diff Y plus diff Z new}, one more time we have, for all $\varepsilon > 0$, that
\[
\mathbb E\left[ \sup_{s\in [0,T]}  |Y_s(\pi)-Y_s(\pi')|^2\right] 
\leq  C(1+\varepsilon^{-1})\mathbb E\left[   \int_0^T D_h(\pi_t|\pi'_t)\,dt \right] + \varepsilon 2\hat C\E \left[ \sup_{t\in[0,T]} |Y_t(\pi)-Y_t(\pi')|^2\right]\,. 
\]
Finally, taking $\varepsilon = \frac1{2 \hat C}$ leads to
\[
\begin{split}
\frac12\mathbb E \sup_{s\in [0,T]}  |Y_s(\pi)-Y_s(\pi')|^2
& \leq  C\mathbb E\left[   \int_0^T D_h(\pi_t|\pi'_t)\,dt \right]\,.
\end{split}
\]
This, together with~\eqref{eq: final diff Y plus diff Z new} concludes the proof. 
\end{proof}

\begin{remark}
\label{rmk:bmo}
The above argument leverages  the theory of BMO martingales 
and allows for incorporating more general   diffusion coefficients, beyond the scope  of the  classical $L^2$-approach.  
Indeed,  to obtain the desired Lipschitz regularity,
standard  $L^2$-estimate in  \cite[Theorem 4.2.3]{zhang2017backward} 
requires us to 
bound  
$\E\big[\big( \int_0^T |(D_x \sigma_s)(X_s(\pi),\pi_s)-(D_x \sigma_s)(X_s(\pi'),\pi'_s)||Z_s(\pi) | \,ds \big)^2 \big]$.
Under   Assumption~\ref{assumption:spatial_derivative}, 
it reduces to upper bound 
$
\E\big[\sup_{s\in [0,T]}| X_s(\pi)- X_s(\pi')|^2 \big( \int_0^T   |Z_s(\pi) | \,ds \big)^2 \big]$.
As $Z(\pi)$ is not uniformly bounded, 
it is unclear how to bound the term using $\E\int_0^T D_h(\pi_t|\pi'_t)\,dt$.  
Here we  develop a more refined analysis and handle the $Z$  dependence using 
the uniform BMO bound   in Lemma \ref{lem Y Z bounded}.
Similar remarks 
apply to the proof of Lemma \ref{lem regularity of the flat deriv of hamiltonian}.
\end{remark}

\subsection{Proof of Theorem~\ref{th estimate for difference of J}}
\label{sec:proof-relative-smoothness}

This section proves the relative smoothness of the cost functional $\pi\mapsto J^\tau(\pi)$, namely Theorem~\ref{th estimate for difference of J}.

We start by  recalling that the G\^ateaux derivative of $J^0$ can be expressed by   the flat derivative of the Hamiltonian.
The proof relies on charactersing the pathwise derivative of  $\eps\mapsto X(\pi+\eps(\pi'-\pi))$. Compared with  \cite[Lemma 3.5]{vsivska2020gradient}, additional complexities arise due to the possible  unboundedness  of   flat derivatives of the coefficients $(b, \sigma, f)$ with respect to  the control $m \in \mathfrak C$ and the fact that the regularity of these coefficients in  $m$ is characterized by the Bregman divergence $D_h$.
The detailed arguments can be found in~\ref{sec:proof_of_dir_der_of_J0}.

\begin{lemma}
\label{lem lemma form siska szpruch}  

Suppose  Assumptions \ref{assumption controlled SDE for modified MSA}, \ref{assum:regularity_f_g},
\ref{assumption:spatial_derivative}
and \ref{assum:regularity_flat_derivative} hold. 
Then for all   $\pi,\pi'\in\mathcal{A}_\mathfrak{C}$ such that $\mathbb E \int_0^T D_h(\pi'_t|\pi_t)\,dt < \infty$,
\begin{equation*}
\frac{d}{d\varepsilon}
J^0(\pi+\varepsilon(\pi'-\pi))\big|_{\varepsilon=0}=\E\int_0^T\int\frac{\delta H^{0}_s}{\delta m}(\Theta_s(\pi),\pi_s,a)(\pi'_s-\pi_s)(da)\,ds\,.
\end{equation*}
\end{lemma}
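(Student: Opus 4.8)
The plan is to follow the classical Pontryagin-type argument adapted to measure-valued controls (in the spirit of \cite[Lemma 3.5]{vsivska2020gradient}): differentiate the controlled state in the direction $\pi'-\pi$, identify the resulting variation process, and then use the adjoint BSDE~\eqref{eq: adjoint equation} to rewrite the directional derivative of $J^0$ through $\tfrac{\delta H^0}{\delta m}$. Throughout write $\pi^{\varepsilon}:=\pi+\varepsilon(\pi'-\pi)$, which lies in $\mathcal A_{\mathfrak C}$ for $\varepsilon\in[0,1]$ by convexity, and write $X:=X(\pi)$, $(Y,Z):=(Y(\pi),Z(\pi))$. First I would introduce the variation process $V$ as the solution of the linear SDE
\begin{equation*}
dV_s = \Big[(D_xb_s)(X_s,\pi_s)V_s + b^{\delta}_s\Big]\,ds + \Big[(D_x\sigma_s)(X_s,\pi_s)V_s + \sigma^{\delta}_s\Big]\,dW_s,\qquad V_0 = 0,
\end{equation*}
with $b^{\delta}_s:=\int\tfrac{\delta b_s}{\delta m}(X_s,\pi_s,a)(\pi'_s-\pi_s)(da)$ and $\sigma^{\delta}_s:=\int\tfrac{\delta\sigma_s}{\delta m}(X_s,\pi_s,a)(\pi'_s-\pi_s)(da)$. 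By the first-order estimates in Assumption~\ref{assum:regularity_flat_derivative} one has $|b^{\delta}_s|^2+|\sigma^{\delta}_s|^2\le C\,D_h(\pi'_s|\pi_s)$, so the inhomogeneous terms lie in $L^2(\Omega\times[0,T])$; together with the boundedness of $D_xb,D_x\sigma$ (which follows from Assumption~\ref{assumption controlled SDE for modified MSA}) this yields a well-defined $V\in L^2(\Omega;C([0,T];\R^d))$ with $\mathbb E\sup_{s\le T}|V_s|^2\le C\,\mathbb E\int_0^T D_h(\pi'_t|\pi_t)\,dt$.

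The main step, and the step I expect to be the main obstacle, is to show $\Delta^{\varepsilon}:=\tfrac1\varepsilon(X^{\varepsilon}-X)\to V$ in $L^2(\Omega;C([0,T];\R^d))$ as $\varepsilon\searrow0$. Expanding $b_s(X^{\varepsilon}_s,\pi^{\varepsilon}_s)-b_s(X_s,\pi_s)$ and the corresponding $\sigma$-difference via the fundamental theorem of calculus (Remark~\ref{rmk:FTC}) separately in the state and the measure variables, and using $\pi^{\varepsilon}_s-\pi_s=\varepsilon(\pi'_s-\pi_s)$, one sees that $R^{\varepsilon}:=\Delta^{\varepsilon}-V$ solves a linear SDE with the same (bounded) linear part and inhomogeneity composed of (i) $\big(\int_0^1(D_xb_s)(X_s+r(X^{\varepsilon}_s-X_s),\pi^{\varepsilon}_s)\,dr-(D_xb_s)(X_s,\pi_s)\big)V_s$ and its $\sigma$-analogue, and (ii) $\int_0^1\!\int\big(\tfrac{\delta b_s}{\delta m}(X_s,\pi^{r\varepsilon}_s,a)-\tfrac{\delta b_s}{\delta m}(X_s,\pi_s,a)\big)(\pi'_s-\pi_s)(da)\,dr$ (the $\sigma$-analogue of (ii) vanishes identically since $\tfrac{\delta\sigma}{\delta m}$ is independent of $m$ under Assumption~\ref{assum:regularity_flat_derivative}). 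A Gronwall estimate bounds $\mathbb E\sup_s|R^{\varepsilon}_s|^2$ by the expected squared $L^2(\mathrm{d}t)$-norms of (i) and (ii), so it remains to send these to $0$. This rests on three points: the a priori bound $\mathbb E\sup_s|X^{\varepsilon}_s-X_s|^2\le C\varepsilon^2\,\mathbb E\int_0^T D_h(\pi'_t|\pi_t)\,dt$, which follows from the same FTC decomposition (the measure-direction term carries an explicit factor $\varepsilon$) and the first-order estimate of Assumption~\ref{assum:regularity_flat_derivative}, and which gives $\sup_{\varepsilon}\mathbb E\sup_s|\Delta^{\varepsilon}_s|^2<\infty$; that (i) is dominated by $C|V_s|^2\in L^1$ and tends to $0$ a.s.\ along a subsequence by Assumption~\ref{assumption:spatial_derivative} and $X^{\varepsilon}\to X$; and that (ii) is dominated by $C\,D_h(\pi'_s|\pi_s)\in L^1$ while, expanding once more in $m$ against the second-order flat derivative and using the estimate on $\int\!\int\tfrac{\delta^2 b_s}{\delta m^2}(\cdots)(\pi'_s-\pi_s)(da')(\pi'_s-\pi_s)(da)$ in Assumption~\ref{assum:regularity_flat_derivative}, it is pointwise $O(\varepsilon\,D_h(\pi'_s|\pi_s))\to0$. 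Dominated convergence then yields $\mathbb E\sup_s|R^{\varepsilon}_s|^2\to0$.

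With $V$ identified, I would apply It\^o's formula to $s\mapsto Y_s^{\top}V_s$. Using~\eqref{eq: adjoint equation}, the SDE for $V$, and the explicit form~\eqref{eq: hamiltonian} of $H^0$ — so that $D_xH^0$ assembles $D_xb,D_x\sigma,D_xf$ and $\tfrac{\delta H^0}{\delta m}$ assembles $\tfrac{\delta b}{\delta m},\tfrac{\delta\sigma}{\delta m},\tfrac{\delta f}{\delta m}$ — the $(D_xb)V$ and $(D_x\sigma)V$ contributions cancel against the drift coming from $\tfrac{\delta H^0}{\delta m}$, and after checking that the stochastic integrals are true martingales (using $\|Y\|_{\mathbb H^{\infty}}<\infty$ and, for the $Z$-term, the BMO bound on $Z\cdot W$ from Lemma~\ref{lem Y Z bounded} together with $V\in L^2$) one obtains
\begin{equation*}
\mathbb E\big[(D_xg)(X_T)^{\top}V_T\big] = \mathbb E\int_0^T\!\Big(\!-(D_xf_s)(X_s,\pi_s)^{\top}V_s + \!\int\!\tfrac{\delta H^0_s}{\delta m}(\Theta_s(\pi),\pi_s,a)(\pi'_s-\pi_s)(da) - \!\int\!\tfrac{\delta f_s}{\delta m}(X_s,\pi_s,a)(\pi'_s-\pi_s)(da)\Big)ds.
\end{equation*}

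Separately, passing to the limit in the difference quotient $\tfrac1\varepsilon\big(J^0(\pi^{\varepsilon})-J^0(\pi)\big)$ by expanding $f$ and $g$ via Remark~\ref{rmk:FTC}, and using boundedness of $D_xf,D_xg$ (Assumption~\ref{assum:regularity_f_g}), the convergence $\Delta^{\varepsilon}\to V$ and the $L^1$-dominations established above, gives
\begin{equation*}
\tfrac{d}{d\varepsilon}J^0(\pi^{\varepsilon})\big|_{\varepsilon=0} = \mathbb E\!\int_0^T\!\Big((D_xf_s)(X_s,\pi_s)^{\top}V_s + \!\int\!\tfrac{\delta f_s}{\delta m}(X_s,\pi_s,a)(\pi'_s-\pi_s)(da)\Big)ds + \mathbb E\big[(D_xg)(X_T)^{\top}V_T\big].
\end{equation*}
Substituting the previous identity and cancelling the $(D_xf)V$ and $\tfrac{\delta f}{\delta m}$ terms leaves exactly $\mathbb E\int_0^T\!\int\tfrac{\delta H^0_s}{\delta m}(\Theta_s(\pi),\pi_s,a)(\pi'_s-\pi_s)(da)\,ds$, which is the claim. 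The only genuinely delicate part is the $L^2$-differentiability of $\varepsilon\mapsto X^{\varepsilon}$: because coefficient regularity in $m$ is measured by the Bregman divergence $D_h$ and the flat derivatives of $b,\sigma,f$ need not be bounded, one must track the exact powers of $\varepsilon$ and lean on both the first- and second-order flat-derivative estimates of Assumption~\ref{assum:regularity_flat_derivative}; the It\^o/duality computation and the dominated-convergence passage for the cost are then routine.
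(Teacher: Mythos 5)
Your proposal is correct and follows essentially the same route as the paper: the same variation process $V$, the same two-step structure (first the $L^2$-differentiability of $\varepsilon\mapsto X(\pi^\varepsilon)$ via a Gronwall estimate for the remainder SDE and dominated convergence, using the first- and second-order flat-derivative bounds of Assumption~\ref{assum:regularity_flat_derivative} and $\tfrac{\delta^2\sigma}{\delta m^2}=0$; then the chain rule for $J^0$ and the It\^o/duality computation with $Y^\top V$ that cancels the $D_xf$ and $\tfrac{\delta f}{\delta m}$ terms). The only differences are cosmetic: the order in which you expand in $x$ and $m$, and the slightly sharper $\varepsilon^2$ a priori bound on $\mathbb E\sup_t|X_t(\pi^\varepsilon)-X_t(\pi)|^2$, where the paper's Lemma~\ref{lem standard sde estimate} with convexity of $D_h$ already suffices.
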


The following    lemma  proves        the regularity   of $\frac{\delta H^{0}}{\delta m}$ with respect to $\pi$, which will be used in the proof of Theorem  \ref{th estimate for difference of J}.

\begin{lemma}\label{lem regularity of the flat deriv of hamiltonian}

Suppose  Assumptions \ref{assumption controlled SDE for modified MSA}, \ref{assum:regularity_f_g}, \ref{assumption:spatial_derivative} and \ref{assum:regularity_flat_derivative} hold. Then  there exists $C\ge 0$ such that for all $\pi,\pi'\in\mathcal{A}_\mathfrak{C}$,  
\begin{equation*}
\begin{split}
&\E  \int_0^1\int_0^T\left[\int\left(\frac{\delta H^{0}_s}{\delta m}(\Theta_s(\pi^{\varepsilon}), \pi^{\varepsilon}_s,a)-\frac{\delta H^{0}_s}{\delta m}(\Theta_s(\pi), \pi_s,a)\right)(\pi'_s-\pi_s)(da) \right]ds\,d\varepsilon
\le C\E\int_0^T D_h(\pi'_s|\pi_s)\,ds\,,
\end{split}
\end{equation*}
where $\pi^{\varepsilon}=\pi+\varepsilon(\pi'-\pi)$ for $\varepsilon\in(0,1)$.
\end{lemma}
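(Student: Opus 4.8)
First I would write the Hamiltonian via~\eqref{eq: hamiltonian}, so that $\frac{\delta H^0_s}{\delta m}(x,y,z,m,a)=\big(\frac{\delta b_s}{\delta m}(x,m,a)\big)^\top y+\text{tr}\big(\big(\frac{\delta \sigma_s}{\delta m}(x,m,a)\big)^\top z\big)+\frac{\delta f_s}{\delta m}(x,m,a)$, and decompose the difference $\frac{\delta H^0_s}{\delta m}(\Theta_s(\pi^\varepsilon),\pi^\varepsilon_s,a)-\frac{\delta H^0_s}{\delta m}(\Theta_s(\pi),\pi_s,a)$ appearing in the integrand into three blocks, one for each coefficient $\phi\in\{b,\sigma,f\}$; inside each block I would telescope over the three arguments that differ between $\pi$ and $\pi^\varepsilon=\pi+\varepsilon(\pi'-\pi)$, namely the state $X$, the adjoint factor ($Y$ for $b$, $Z$ for $\sigma$, none for $f$) and the measure $m$. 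It is convenient to record first the a~priori bounds the whole argument rests on. Since $m'\mapsto D_h(m'|\pi_s)$ is convex and vanishes at $\pi_s$ (Lemma~\ref{lemma:properties-of-h-bregman}, Item~\ref{item:D_convex}), one has $D_h(\pi^\varepsilon_s|\pi_s)\le \varepsilon D_h(\pi'_s|\pi_s)$; hence, writing $\delta X^\varepsilon:=X(\pi^\varepsilon)-X(\pi)$, $\delta Y^\varepsilon:=Y(\pi^\varepsilon)-Y(\pi)$ and $\delta Z^\varepsilon:=Z(\pi^\varepsilon)-Z(\pi)$, Lemmas~\ref{lem standard sde estimate} and~\ref{lem stability for Y and Z} give $\E\sup_{t}|\delta X^\varepsilon_t|^2+\E\sup_t|\delta Y^\varepsilon_t|^2+\E\int_0^T|\delta Z^\varepsilon_s|^2\,ds\le C\varepsilon\,\E\int_0^T D_h(\pi'_s|\pi_s)\,ds$, while Lemma~\ref{lem Y Z bounded} gives $\sup_\pi\|Y(\pi)\|_{\mathbb{H}^{\infty}}\le C$ and $\hat C:=\sup_\pi\|Z(\pi)\cdot W\|_{\operatorname{BMO}}<\infty$. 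One may assume $\E\int_0^T D_h(\pi'_s|\pi_s)\,ds<\infty$, otherwise the inequality is vacuous.

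The $f$-block and the $m$- and $Y$-parts of the $b$-block are routine. For an $x$-part I would use the fundamental theorem of calculus in $x$, writing $\frac{\delta \phi_s}{\delta m}(X_s(\pi^\varepsilon),\pi^\varepsilon_s,a)-\frac{\delta \phi_s}{\delta m}(X_s(\pi),\pi^\varepsilon_s,a)=\int_0^1 (D_x\tfrac{\delta \phi_s}{\delta m})(X_s(\pi)+r\delta X^\varepsilon_s,\pi^\varepsilon_s,a)\,\delta X^\varepsilon_s\,dr$, integrating against $(\pi'_s-\pi_s)(da)$, and using the bound $\big|\int (D_x\tfrac{\delta \phi_s}{\delta m})(\cdot,a)(\pi'_s-\pi_s)(da)\big|^2\le K D_h(\pi'_s|\pi_s)$ from Assumption~\ref{assum:regularity_flat_derivative}; the leftover factor $|\delta X^\varepsilon_s|$ (times the uniformly bounded $|Y_s(\pi^\varepsilon)|$ when $\phi=b$) is absorbed by Young's inequality and the a~priori bound for $\delta X^\varepsilon$ into $C\,\E\int_0^T D_h(\pi'_s|\pi_s)\,ds$. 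For an $m$-part of the $b$- or $f$-block I would use the fundamental theorem of calculus in $m$ (Remark~\ref{rmk:FTC}), $\frac{\delta \phi_s}{\delta m}(X_s(\pi),\pi^\varepsilon_s,a)-\frac{\delta \phi_s}{\delta m}(X_s(\pi),\pi_s,a)=\int_0^1\int \frac{\delta^2 \phi_s}{\delta m^2}(X_s(\pi),\pi_s+r(\pi^\varepsilon_s-\pi_s),a,a')(\pi^\varepsilon_s-\pi_s)(da')\,dr$; since $\pi^\varepsilon_s-\pi_s=\varepsilon(\pi'_s-\pi_s)$, integrating once more against $(\pi'_s-\pi_s)(da)$ and invoking the second-derivative bound in Assumption~\ref{assum:regularity_flat_derivative} gives a contribution of size $\le\varepsilon K D_h(\pi'_s|\pi_s)$ (times $|Y_s(\pi^\varepsilon)|$ for $\phi=b$), which is harmless. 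The $Y$-part of the $b$-block equals $\big(\int \frac{\delta b_s}{\delta m}(X_s(\pi),\pi_s,a)(\pi'_s-\pi_s)(da)\big)^\top\delta Y^\varepsilon_s$, of size at most $\sqrt{K D_h(\pi'_s|\pi_s)}\,|\delta Y^\varepsilon_s|$ by Assumption~\ref{assum:regularity_flat_derivative}, so Young's inequality and Lemma~\ref{lem stability for Y and Z} close it.

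The $\sigma$-block is the crux, and it is made tractable by the structural condition $\frac{\delta^2\sigma}{\delta m^2}=0$ in Assumption~\ref{assum:regularity_flat_derivative}: then $\frac{\delta \sigma}{\delta m}$ does not depend on $m$, so there is no $m$-part, and only an $x$-part carrying the raw process $Z(\pi^\varepsilon)$ and a $Z$-part carrying $\delta Z^\varepsilon$ survive. The $Z$-part, $\text{tr}\big(\big(\int \frac{\delta\sigma_s}{\delta m}(X_s(\pi),a)(\pi'_s-\pi_s)(da)\big)^\top\delta Z^\varepsilon_s\big)$, has size $\le\sqrt{K D_h(\pi'_s|\pi_s)}\,|\delta Z^\varepsilon_s|$, so Young's inequality together with the $L^2$-estimate for $\delta Z^\varepsilon$ in Lemma~\ref{lem stability for Y and Z} handles it.

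The $x$-part of the $\sigma$-block is the main obstacle. After the fundamental theorem of calculus in $x$ and Assumption~\ref{assum:regularity_flat_derivative} it is dominated by $\E\int_0^T \sqrt{K D_h(\pi'_s|\pi_s)}\,|\delta X^\varepsilon_s|\,|Z_s(\pi^\varepsilon)|\,ds$, and a naive Cauchy--Schwarz in $s$ would leave $\E\big[(\int_0^T|Z_s(\pi^\varepsilon)|^2\,ds)^{1/2}(\cdots)\big]$, which cannot be closed against the Bregman divergence (cf.\ Remark~\ref{rmk:bmo}); the issue is that $Z(\pi^\varepsilon)$ is not uniformly bounded. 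Instead I would apply Lemma~\ref{lemma:L2_BMO_L1} with $f$ a component of $Z(\pi^\varepsilon)$, using $\|Z(\pi^\varepsilon)\cdot W\|_{\operatorname{BMO}}\le\hat C$, and $g_s:=\sqrt{K D_h(\pi'_s|\pi_s)}\,|\delta X^\varepsilon_s|$; bounding $\big(\int_0^T g_s^2\,ds\big)^{1/2}\le\sqrt K\,\sup_t|\delta X^\varepsilon_t|\,\big(\int_0^T D_h(\pi'_s|\pi_s)\,ds\big)^{1/2}$ and then applying Cauchy--Schwarz in $\omega$ together with the a~priori bound for $\delta X^\varepsilon$ yields $C\,\E\int_0^T D_h(\pi'_s|\pi_s)\,ds$. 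This is precisely where the uniform BMO bound of Lemma~\ref{lem Y Z bounded} and Fefferman's inequality (Lemma~\ref{thr Fefferman's inequality}, through Lemma~\ref{lemma:L2_BMO_L1}) are indispensable. Finally, since every bound above is uniform in $\varepsilon\in(0,1)$ --- indeed several carry an extra factor $\varepsilon$ --- Fubini's theorem lets me integrate $\int_0^1 d\varepsilon$ and sum the three blocks, yielding the asserted estimate.
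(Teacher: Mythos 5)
Your proposal is correct and follows essentially the same route as the paper's proof: the same telescoping of $\tfrac{\delta H^0}{\delta m}$ over the $x$-, adjoint- and $m$-arguments for each of $b,\sigma,f$, the same use of the fundamental theorem of calculus in $x$ and $m$ together with Assumption~\ref{assum:regularity_flat_derivative}, the stability estimates of Lemmas~\ref{lem standard sde estimate} and~\ref{lem stability for Y and Z} with the convexity bound $D_h(\pi^\varepsilon_s|\pi_s)\le\varepsilon D_h(\pi'_s|\pi_s)$, the vanishing of $\tfrac{\delta^2\sigma}{\delta m^2}$, and, for the critical $x$-part of the $\sigma$-block, the uniform BMO bound of Lemma~\ref{lem Y Z bounded} fed into Lemma~\ref{lemma:L2_BMO_L1}. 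The only differences (Cauchy--Schwarz in $\omega$ versus Young's inequality at the final absorption step, and the order of telescoping) are cosmetic.
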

\begin{proof}

Note that 
\begin{equation*}
\begin{split}
&\E \int_0^1\int_0^T \int\left(\frac{\delta H^{0}_s}{\delta m}(\Theta(\pi^{\varepsilon}),\pi_s^{\varepsilon},a) - \frac{\delta H^{0}_s}{\delta m}(\Theta(\pi),\pi_s,a)\right)(\pi'_s-\pi_s)(da)\,ds\,d\varepsilon 
= I^{(1)}+I^{(2)}\,, 
\end{split}
\end{equation*}
where 
\begin{align*}
I^{(1)} &\coloneqq \E  \int_0^1\int_0^T \int\left(\frac{\delta H^{0}_s}{\delta m}(\Theta_s(\pi^{\varepsilon}), \pi^{\varepsilon}_s,a)-\frac{\delta H^{0}_s}{\delta m}(\Theta_s(\pi), \pi^{\varepsilon}_s,a)\right)(\pi'_s-\pi_s)(da) ds\,d\varepsilon\,,\\
I^{(2)} &\coloneqq \E  \int_0^1\int_0^T \int\left(\frac{\delta H^{0}_s}{\delta m}(\Theta_s(\pi), \pi^{\varepsilon}_s,a)-\frac{\delta H^{0}_s}{\delta m}(\Theta_s(\pi), \pi_s,a)\right)(\pi'_s-\pi_s)(da) ds\,d\varepsilon\,.
\end{align*}

To estimate the term $I^{(1)}$, observe that for each $\varepsilon\in (0,1)$,
\begin{equation*}
\begin{split}
&\int\left[\frac{\delta H^{0}_s}{\delta m}(\Theta_s(\pi^{\varepsilon}), \pi^{\varepsilon}_s,a)-\frac{\delta H^{0}_s}{\delta m}(\Theta_s(\pi), \pi^{\varepsilon}_s,a)\right](\pi'_s-\pi_s)(da) \\
& 
= \int\Bigg[\left(\frac{\delta b_s}{\delta m}(X_s(\pi^{\varepsilon}),\pi^{\varepsilon}_s,a)-\frac{\delta b_s}{\delta m}(X_s(\pi),\pi^{\varepsilon}_s,a)\right)Y_s(\pi^{\varepsilon}) +\frac{\delta b_s}{\delta m}(X_s(\pi),\pi^{\varepsilon}_s,a)\Big(Y_s(\pi^{\varepsilon})-Y_s(\pi)\Big) \\
&  \qquad+\left(\frac{\delta \sigma_s}{\delta m}(X_s(\pi^{\varepsilon}),\pi^{\varepsilon}_s,a)-\frac{\delta \sigma_s}{\delta m}(X_s(\pi),\pi^{\varepsilon}_s,a)\right) Z_s(\pi^{\varepsilon})
+\frac{\delta \sigma_s}{\delta m}(X_s(\pi),\pi^{\varepsilon}_s,a)   (Z_s(\pi^{\varepsilon})-Z_s(\pi) )\\
&  \qquad+\frac{\delta f_s}{\delta m}(X_s(\pi^{\varepsilon}),\pi^{\varepsilon}_s,a)-\frac{\delta f_s}{\delta m}(X_s(\pi),\pi^{\varepsilon}_s,a)\Bigg](\pi'_s-\pi_s)(a)\,da\,.
\end{split}
\end{equation*}
Fix $\varepsilon\in (0,1)$. By the fundamental theorem of calculus we have that
\begin{equation*}
\frac{\delta b_s}{\delta m}(X_s(\pi^{\varepsilon}),\pi^{\varepsilon}_s,a)-\frac{\delta b_s}{\delta m}(X_s(\pi),\pi^{\varepsilon}_s,a)=\int_0^1 \left( D_x \frac{\delta b_s}{\delta m}(X_s^{\varepsilon, \varepsilon'},\pi^{\varepsilon}_s,a)\right)(X_s(\pi^{\varepsilon})-X_s(\pi))\,d\varepsilon'\,,
\end{equation*}
where $X_s^{\varepsilon, \varepsilon'}\coloneqq X_s(\pi)+\varepsilon'(X_s(\pi^{\varepsilon})-X_s(\pi))$ for all $\varepsilon'\in(0,1)$. Thus
\begin{equation*}
\begin{split}
\int&\left[\frac{\delta b_s}{\delta m}(X_s(\pi^{\varepsilon}),\pi^{\varepsilon}_s,a)-\frac{\delta b_s}{\delta m}(X_s(\pi),\pi^{\varepsilon}_s,a)\right] Y_s(\pi^{\varepsilon})(\pi'_s-\pi_s)(d a) \\
&\le  |X_s(\pi^{\varepsilon})-X_s(\pi)||Y_s(\pi^{\varepsilon})| \int_0^1\left|\int \left( D_x \frac{\delta b_s}{\delta m}(X_s^{\varepsilon, \varepsilon'},\pi^{\varepsilon}_s,a)\right) (\pi'_s-\pi_s)(da) \right|\,d\varepsilon'\,.
\end{split}
\end{equation*}
Applying similar arguments to   $\sigma$ and $f$ and using Young's inequality, we have that
\begin{align*}
&\E \int_0^1\int_0^T\left[\int\left(\frac{\delta H^{0}_s}{\delta m}(\Theta_s(\pi^{\varepsilon}), \pi^{\varepsilon}_s,a)-\frac{\delta H^{0}_s}{\delta m}(\Theta_s(\pi), \pi_s^{\varepsilon},a)\right)(\pi'_s-\pi_s)(da) \right]ds\,d\varepsilon
\le I_1 + I_2 + I_3 + I_4\,,
\end{align*}
where
\begin{equation*}
\begin{split}
I_1 & \coloneqq  \int_0^1\E\int_0^T\int\bigg(\frac{\delta b_s}{\delta m}(X_s(\pi),\pi^{\varepsilon}_s,a)(Y_s(\pi^{\varepsilon})-Y_s(\pi))\\
&\quad
+\frac{\delta \sigma_s}{\delta m}(X_s(\pi),\pi^{\varepsilon}_s,a) (Z_s(\pi^{\varepsilon})-Z_s(\pi))\bigg)(\pi'_s-\pi_s)(da) \,ds\,d\varepsilon\,,	\\
I_2 & \coloneqq  \int_0^1\E\int_0^T|Y_s(\pi^{\varepsilon})|\bigg[
\frac{1}{2}|X_s(\pi^{\varepsilon})-X_s(\pi)|^2 \\
& \quad + \frac{1}{2}\left(\int_0^1\left|\int \left( D_x \frac{\delta b_s}{\delta m}(X_s^{\varepsilon, \varepsilon'},\pi^{\varepsilon}_s,a)\right) (\pi'_s-\pi_s)(a)\,da\right|\,d\varepsilon'\right)^2\bigg]\,ds\,d\varepsilon\,,		
\\
I_3 &\coloneqq  \int_0^1\E\int_0^T|Z_s(\pi^{\varepsilon})| |X_s(\pi^{\varepsilon})-X_s(\pi)| 
   \int_0^1\left|\int \left( D_x \frac{\delta \sigma_s}{\delta m}(X_s^{\varepsilon, \varepsilon'},\pi^{\varepsilon}_s,a)\right) (\pi'_s-\pi_s)(da) \right|\,d\varepsilon'\,ds\,d\varepsilon	
\\
I_4 &\coloneqq  \int_0^1\E\int_0^T\bigg[\frac{1}{2}|X_s(\pi^{\varepsilon})-X_s(\pi)|^2
+\frac{1}{2}\left(\int_0^1\left|\int \left( D_x \frac{\delta f_s}{\delta m}(X_s^{\varepsilon, \varepsilon'},\pi^{\varepsilon}_s,a)\right) (\pi'_s-\pi_s)(da) \right|\,d\varepsilon'\right)^2\bigg]\,ds\,d\varepsilon\,.	
\end{split}
\end{equation*}
Observe that by Fubini's theorem and the Cauchy--Schwarz inequality, 
\begin{equation*}
\begin{split}\label{eq prod of diff Y and diff pi}
|I_1|&\le \int_0^1\left(\E\int_0^T|Y_s(\pi^{\varepsilon})-Y_s(\pi)|^{2}\,ds\right)^{1/2} 
\left(\E\int_0^T\left|\int\frac{\delta b_s}{\delta m}(X_s(\pi),\pi^{\varepsilon}_s,a)(\pi'_s-\pi_s)(da) \right|^2\,ds\,\right)^{1/2}\,d\varepsilon\\
&\quad +\int_0^1\left(\E\int_0^T|Z_s(\pi^{\varepsilon})-Z_s(\pi)|^{2}\,ds\right)^{1/2}
 \left(\E\int_0^T\left|\int\frac{\delta \sigma_s}{\delta m}(X_s(\pi),\pi^{\varepsilon}_s,a)(\pi'_s-\pi_s)(da) \right|^2\,ds\,\right)^{1/2}\,d\varepsilon\,,
\end{split}
\end{equation*}
which along with 
Lemma~\ref{lem stability for Y and Z},  Assumption~\ref{assum:regularity_flat_derivative}, and the convexity of $  D_h(\cdot|\pi_s)$ implies   
\begin{equation*}
\begin{split}
|I_1|& \le 2 \int_0^1\left(C\E\int_0^TD_h(\pi^{\varepsilon}_s|\pi_s) \,ds\right)^{1/2} \left(K\E\int_0^TD_h(\pi'_s|\pi_s)\,ds\,\right)^{1/2}\,d\varepsilon\\
&\le   C  \int_0^1\varepsilon^{1/2}\left( \E\int_0^T D_h(\pi'_s|\pi_s)\,ds\right) \,d\varepsilon \le   C\E\int_0^T D_h(\pi'_s|\pi_s)\,ds\,,
\end{split}
\end{equation*}
where the last inequality 
used 
$D_h(\pi^{\varepsilon}_s|\pi_s)\le \varepsilon D_h(\pi'_s|\pi_s)+(1-\varepsilon) D_h(\pi_s|\pi_s)
= \varepsilon D_h(\pi'_s|\pi_s) $ 
since  
$\pi^{\varepsilon}=\pi+\varepsilon(\pi'-\pi)$ for $\varepsilon\in(0,1)$.

For the term $I_2$, recall that by Lemma~\ref{lem Y Z bounded}, 
$\sup_{\pi\in\mathcal{A}_{\mathfrak C}}\|Y(\pi)\|_{\mathbb{H}^\infty}<\infty$. 
Then  using  Jensen's inequality,  Assumption \ref{assum:regularity_flat_derivative}, the  convexity of the Bregman divergence and Lemma~\ref{lem standard sde estimate}, 
\begin{equation*}
\begin{split}
I_2&\le \frac{\sup_{\varepsilon\in [0,1]}\|Y(\pi^{\varepsilon})\|_{\mathbb{H}^\infty}}{2} \int_0^1\E\left[\sup_{0\le s\le T}|X_s(\pi^{\varepsilon})-X_s(\pi)|^2+K\int_0^TD_h(\pi'_s|\pi_s)\,ds\right]\,d\varepsilon\\
&\le \frac{C}{2} \int_0^1\E\left[\int_0^TD_h(\pi^{\varepsilon}_s|\pi_s)\,ds+ \int_0^TD_h(\pi'_s|\pi_s)\,ds\right]\,d\varepsilon
\le C \E\int_0^TD_h(\pi'_s|\pi_s)\,ds\,.
\end{split}
\end{equation*}
Similar arguments show that  
\begin{equation*}
\begin{split}
I_4\le C\E\int_0^TD_h(\pi'_s|\pi_s)\,ds\,.
\end{split}
\end{equation*}

To estimate $I_3$, we   define for all $s\in [0,T]$ and  $\varepsilon',\varepsilon \in (0,1)$, 
\begin{equation*}
A^{\varepsilon,\varepsilon'}_s \coloneqq  \left|\int \left( D_x \frac{\delta \sigma_s}{\delta m}(X_s^{\varepsilon, \varepsilon'},\pi^{\varepsilon}_s,a)\right) (\pi'_s-\pi_s)(da) \right|\,.
\end{equation*}
By Assumption \ref{assum:regularity_flat_derivative}, 
$|A^{\varepsilon,\varepsilon'}_s|^2\le  KD_h(\pi'_s|\pi_s)$. 
Using    Lemmas~\ref{lem Y Z bounded} and \ref{lemma:L2_BMO_L1}, Young's inequality, Lemma~\ref{lem standard sde estimate} and the convexity of the Bregman divergence,
\begin{equation*}
\begin{split}
I_3 &= \int_0^1\E\int_0^T|Z_s(\pi^{\varepsilon})||X_s(\pi^{\varepsilon})-X_s(\pi)|  \int_0^1A^{\varepsilon,\varepsilon'}_s\,d\varepsilon'\,ds\,d\varepsilon\\
&\le C\int_0^1\|Z(\pi^{\varepsilon})\cdot W\|_{\operatorname{BMO}}\, \E\left[\left(\int_0^T|X_s(\pi^{\varepsilon})-X_s(\pi)|^2  \left(\int_0^1A^{\varepsilon,\varepsilon'}_s\,d\varepsilon'\right)^2\,ds\right)^{\frac{1}{2}}\right]\,d\varepsilon\\
&\le C\int_0^1\|Z(\pi^{\varepsilon})\cdot W\|_{\operatorname{BMO}}\, 
\E\left[\sup_{s\in [0,T]}|X_s(\pi^{\varepsilon})-X_s(\pi)|\left(\int_0^T \int_0^1 (A^{\varepsilon,\varepsilon'}_s)^2\,d \varepsilon'  ds\right)^{\frac{1}{2}}\right] \,d\varepsilon\\
&\le  C\int_0^1 \E\left[\sup_{s\in [0,T]}|X_s(\pi^{\varepsilon})-X_s(\pi)|^2\,+  \int_0^T\int_0^1 (A^{\varepsilon,\varepsilon'}_s)^2\,d \varepsilon'  ds\right] d\varepsilon\\
&\le  C\int_0^1\E\left[ \int_0^TD_h(\pi^{\varepsilon}_s|\pi_s)\,ds+ \int_0^TD_h(\pi'_s|\pi_s)\,ds\right]\,d\varepsilon
\le 
C  \E\int_0^TD_h(\pi'_s|\pi_s)\,ds\,.
\end{split}
\end{equation*}
Combining estimates for $I_1,\,I_2,\,I_3$ and $I_4$ yields that 
$I^{(1)}\le C  \E\int_0^TD_h(\pi'_s|\pi_s)\,ds$.

To estimate $I^{(2)}$, by setting $\pi^{\varepsilon',\varepsilon} = \varepsilon'\pi^\varepsilon+(1-\varepsilon')\pi$
with $\pi^\varepsilon = \pi + \varepsilon(\pi'-\pi)$,  
the fundamental theorem of calculus gives
\begin{equation*}
\begin{split}
 I^{(2)}
&= \E \int_0^1\int_0^T\int\left(\frac{\delta H^{0}_s}{\delta m}(\Theta(\pi),\pi_s^{\varepsilon},a) - \frac{\delta H^{0}_s}{\delta m}(\Theta(\pi),\pi_s,a)\right)(\pi'_s-\pi_s)(da)\,ds\,d\varepsilon\\
& =  \int_0^1 \varepsilon\, \E\int_0^T\int \int_0^1 \int \frac{\delta^2 H^{0}_s}{\delta m^2}(\Theta(\pi),\pi_s^{\varepsilon', \varepsilon},a,a')(\pi'_s-\pi_s)(da')\,d\varepsilon'(\pi'_s-\pi_s)(da)\,ds\,d\varepsilon\,,
\end{split}
\end{equation*}
where we used  $\pi^\varepsilon- \pi = \varepsilon(\pi'-\pi)$.
For each $s\in [0,T]$ and $\varepsilon, \varepsilon'\in (0,1)$, 
\begin{equation}
\label{eq:2nd_H}
\begin{split}
& \int\frac{\delta^2 H^{0}_s}{\delta m^2}(\Theta_s(\pi), \pi^{\varepsilon',\varepsilon}_s,a,a')(\pi'_s-\pi_s)(da')\\
&= \int\Bigg[\frac{\delta^2 b_s}{\delta m^2}(X_s(\pi),\pi^{\varepsilon',\varepsilon}_s,a,a')Y_s(\pi)  +\frac{\delta^2 f_s}{\delta m^2}(X_s(\pi),\pi^{\varepsilon',\varepsilon}_s,a,a')\Bigg](\pi'_s-\pi_s)(da')\,,
\end{split}
\end{equation}
where we used 
$\frac{\delta^2 \sigma_s}{\delta m^2}(X_s(\pi),\pi^{\varepsilon',\varepsilon}_s,a,a') =0$ due to Assumption \ref{assum:regularity_flat_derivative}.
Applying Assumption \ref{assum:regularity_flat_derivative} and   Lemma \ref{lem Y Z bounded} yields that
$
I^{(2)} \le  C  \E\int_0^TD_h(\pi'_s|\pi_s)\,ds\,.
$
This gives the desired estimate and  finishes the proof. 
\end{proof}

Now we are ready to prove Theorem~\ref{th estimate for difference of J}. 

\begin{proof}[Proof of Theorem~\ref{th estimate for difference of J}]

Fix   $\pi,\pi'\in\mathcal{A_{\mathfrak C}}$. We assume without loss of generality that 
$\E\int_{0}^{T} D_h(\pi'_s|\pi_s) \, ds<\infty$, since otherwise the inequality holds trivially. 

By Lemma \ref{lem lemma form siska szpruch}  and  the fundamental theorem of calculus (see Remark \ref{rmk:FTC}), 
\begin{equation*} 
\begin{split}
J^\tau(\pi')-J^\tau(\pi)&=\int_0^1\E\int_0^T\left[\int\frac{\delta H^{0}_s}{\delta m}(\Theta_s(\pi^{\varepsilon}), \pi^{\varepsilon}_s,a)(\pi'_s-\pi_s)(a)\,da\right]\,ds\,d\varepsilon
+ \tau\,\E\int_0^T\left[h(\pi'_s)-h(\pi_s)\right]\,ds\,.
\end{split}
\end{equation*}
where $\pi^\varepsilon=\pi+\varepsilon(\pi'-\pi)$ for all $\varepsilon\in(0,1)$.
By the definition of $D_h(\pi'_s|\pi_s)$, 
\begin{equation*}
\begin{split}
\E\int_{0}^{T}\left[h(\pi'_s) - h(\pi_s)\right]\,ds =\E\int_{0}^{T}\left(D_h(\pi'_s|\pi_s) + \int \frac{\delta h}{\delta m}(\pi_s,a)(\pi'_s-\pi_s)(da)\right)\,ds\,,
\end{split}
\end{equation*}
Hence we have 
\begin{equation}\label{eq expansion of the cost functional}
\begin{split}
& J^{\tau}(\pi')-J^{\tau}(\pi) - \E\int_0^T\int\frac{\delta H^{\tau}_s}{\delta m}(\Theta_s(\pi),\pi_s,a)(\pi'_s-\pi_s)(da)\,ds\\
& = \int_0^1\E\int_0^T\int\left(\frac{\delta H^{0}_s}{\delta m}(\Theta_s(\pi^{\varepsilon}),\pi_s^{\varepsilon},a) - \frac{\delta H^{0}_s}{\delta m}(\Theta_s(\pi),\pi_s,a)\right)(\pi'_s-\pi_s)(da)\,ds\,d\varepsilon
+ \tau\,\E\int_{0}^{T}D_h(\pi'_s|\pi_s)\,ds
\\
&\le  C\,\E\int_{0}^{T}D_h(\pi'_s|\pi_s)\,ds
\,,
\end{split}
\end{equation}
where the last inequality used 
Lemma \ref{lem regularity of the flat deriv of hamiltonian} and Fubini's theorem. 
This completes the proof. 
\end{proof}

\subsection{Proof of Theorem~\ref{thm energy dissipation mirr}}
\label{sec:proof-energy-dissipation-h-breg-mirr}

\begin{proof}[Proof of Theorem~\ref{thm energy dissipation mirr}]
Let $\{\pi^{n}\}_{n\ge 0}$ be the iterates from   Algorithm~\ref{alg mmsa}. Applying    Lemma~\ref{th estimate for difference of J} for $\pi'=\pi^{n+1}$ and $\pi=\pi^{n}$ gives that 
\begin{equation}\label{eq lq difference of J in the proof}
\begin{split}
J^\tau(\pi^{n+1})-J^\tau(\pi^{n})
&\le\E\int_0^T\int\frac{\delta H^{\tau}_s}{\delta m}(\Theta_s(\pi^{n}), \pi^{n}_s, a)(\pi^{n+1}_s-\pi^{n}_s)(da)\,ds
+L\E\int_0^T D_h(\pi^{n+1}_s|\pi^{n}_s) \,ds\,.
\end{split}
\end{equation}
The definition of $\pi^{n+1}$ in \eqref{eq update the control bregman msa} shows that  for all $s\in[0,T]$,
\begin{equation*}
\begin{split}
&\int\frac{\delta H^{\tau}_s}{\delta m}(\Theta_s(\pi^{n}), \pi^{n}_s, a)(\pi^{n+1}_s-\pi^{n}_s)(da)+\lambda\,D_h(\pi^{n+1}_s|\pi^{n}_s)
\\
&\le \int\frac{\delta H^{\tau}_s}{\delta m}(\Theta_s(\pi^{n}), \pi^{n}_s, a)(\pi^{n}_s-\pi^{n}_s)(da)+\lambda\,D_h(\pi^{n}_s|\pi^{n}_s)
= 0\,.
\end{split}
\end{equation*}
Thus for all $n\in \N\cup\{0\}$, 
\begin{equation}
\label{eq proof energy diss 1}
\begin{split}
&J^\tau(\pi^{n+1})-J^\tau(\pi^{n})\le (L-\lambda) \E\int_0^TD_h(\pi^{n+1}_s|\pi^{n}_s)\,ds\,,
\end{split}
\end{equation}
which along with $D_h(\pi^{n+1}_s|\pi^{n}_s)\ge 0$ and $\lambda\ge L$ shows that 
$J^\tau(\pi^{n+1})-J^\tau(\pi^{n})\le 0$.

We further   re-write~\eqref{eq proof energy diss 1} as 
\begin{equation}
\begin{split}
\E\int_0^TD_h(\pi^{n+1}_s|\pi^{n}_s)\,ds \leq (\lambda - L)^{-1}(J^\tau(\pi^{n}) - J^\tau(\pi^{n+1}))\,.
\end{split}
\end{equation}
Summing up over $n=0,1,\ldots,m-1$ and observing the telescoping sum yields 
\begin{equation}
\begin{split}
\sum_{n=0}^{m-1} \E\int_0^TD_h(\pi^{n+1}_s|\pi^{n}_s)\,ds 
& \leq (\lambda - L)^{-1}(J^\tau(\pi^{0}) - J^\tau(\pi^{m}))
 \leq (\lambda - L)^{-1}\left(J^\tau(\pi^{0}) - \inf_{\pi \in\mathcal A_{\mathfrak C}} J^\tau(\pi)\right)<\infty\,.
\end{split}
\end{equation}
Thus 
by $\inf_{\pi \in\mathcal A_{\mathfrak C}} J^\tau(\pi)>-\infty$,
$\sum_{n=0}^\infty \E\int_0^TD_h(\pi^{n+1}_s|\pi^{n}_s)\,ds < \infty$ which yields the desired conclusion.
\end{proof}

\subsection{Proof of Theorem~\ref{th relative convexity}}
\label{sec:proof-rel-convexity}

\begin{proof}[Proof of Theorem~\ref{th relative convexity}]
Fix   $\pi,\pi'\in\mathcal{A_{\mathfrak C}}$. 
By the definition of $J^\tau$,
\begin{equation*}
\begin{split}
 J^{\tau}(\pi)-J^{\tau}(\pi') 
&=\E\int_{0}^{T}\left(f_s(X_s(\pi),\pi_s)-f_s(X_s(\pi'),\pi_s^{'})+\tau\,h(\pi_s)-\tau\,h(\pi_s^{'})\right)\,ds + \E\left[g(X_T(\pi))-g(X_T(\pi'))\right]\,.
\end{split}
\end{equation*}
Applying  the convexity of $g$ and   It\^o's formula to $t\mapsto Y_t(\pi)^\top X_t(\pi)$ give that 
\begin{equation*}
\begin{split}
  J^{\tau}(\pi)-J^{\tau}(\pi') 
&\le\E\int_{0}^{T}\left( f_s(X_s(\pi),\pi_s)-f_s(X_s(\pi'),\pi_s')+\tau\,h(\pi_s)-\tau\,h(\pi_s')\right) \,ds\\
&\qquad +\E\left[(D_xg)(X_T(\pi))^\top (X_T(\pi)-X_T(\pi'))\right]\\
&\le\E\int_{0}^{T}\left( 
f_s(X_s(\pi),\pi_s)-f_s(X_s(\pi'),\pi_s')+\tau\,h(\pi_s)-\tau\,h(\pi_s')\right) ds\\
&\qquad +\E\left[\int_0^T(Y_s(\pi))^\top d(X_s(\pi)-X_s(\pi'))+\int_0^T(X_s(\pi)-X_s(\pi'))^\top dY_s(\pi)\right]\\
&\qquad +\E\int_0^T\operatorname{tr}((\sigma_s(X_s(\pi),\pi_s)-\sigma_s(X_s(\pi'),\pi'_s))^\top Z_s(\pi))\,ds\,.
\end{split}
\end{equation*}
This implies that 
\begin{equation*}
\begin{split}
& J^{\tau}(\pi)-J^{\tau}(\pi') 
\le \E\int_{0}^{T}\left( 
f_s(X_s(\pi),\pi_s)-f_s(X_s(\pi'),\pi_s')+\tau\,h(\pi_s)-\tau\,h(\pi_s')\right) ds\\
& \,\, +\E\int_0^T(Y_s(\pi))^\top (b_s(X_s(\pi),\pi_s)-b_s(X_s(\pi'),\pi_s'))\,ds\\
& \,\, - \E\int_0^T(X_s(\pi)-X_s(\pi'))^\top (D_xH^0_s)(\Theta_s(\pi^{n}),\pi^{n}_s)\,ds
+\E\int_0^T\operatorname{tr}((\sigma_s(X_s(\pi),\pi_s)-\sigma_s(X_s(\pi'),\pi'_s))^\top Z_s(\pi))\,ds\,,
\end{split}
\end{equation*}
where   It\^{o} integrals vanish under the expectation due to   Lemma \ref{lem Y Z bounded}. 
Observe that
\begin{equation*}
\begin{split}
& H^{\tau}_s(\Theta_s(\pi), \pi_s)  
=
f_s(X_s(\pi),\pi_s) + \tau\,h(\pi_s)
+ (Y_s(\pi))^\top b_s(X_s(\pi),\pi_s) + \operatorname{tr}(\sigma_s(X_s(\pi),\pi_s)^\top Z_s(\pi))\,,
\end{split}
\end{equation*}
and
\begin{equation*}
\begin{split}
&  H^{\tau}_s(X_s(\pi'),Y_s(\pi),Z_s(\pi), \pi_s') 
=f_s(X_s(\pi'),\pi_s') + \tau\,h(\pi_s') + (Y_s(\pi))^\top b_s(X_s(\pi'),\pi_s') + \operatorname{tr}(\sigma_s(X_s(\pi'),\pi'_s)^\top Z_s(\pi))\,.
\end{split}
\end{equation*}
Thus,
\begin{equation}\label{eq lq inequality with star}
\begin{split}
J^{\tau}(\pi)-J^{\tau}(\pi') &\le \E\int_0^T\Big[H^{\tau}_s(\Theta_s(\pi),\pi_s)-H^{\tau}_s(X_s(\pi'),Y_s(\pi),Z_s(\pi), \pi_s')\Big]\,ds \\
&\qquad - \E\left[\int_0^T(X_s(\pi)-X_s(\pi'))^\top (D_xH^0_s)(\Theta_s(\pi),\pi_s)\,ds\right]\,.
\end{split}
\end{equation}
By Assumption~\ref{ass on convexity of hamiltonian}, 
\begin{equation*}
\begin{split}
&\E\int_0^T\Big[H^{0}_s(\Theta_s(\pi),\pi_s)-H^{0}_s(X_s(\pi'),Y_s(\pi),Z_s(\pi), \pi_s')\Big]\,ds\\
&\le \E\left[\int_0^T(X_s(\pi)-X_s(\pi'))^\top (D_xH^0_s)(\Theta_s(\pi),\pi_s)\,ds\right] + \E\int_0^T\int\frac{\delta H^{0}_s}{\delta m}(\Theta_s(\pi^{n}), \pi_s, a)(\pi_s-\pi'_s)(da)\,ds\,.
\end{split}
\end{equation*}
Substituting the above inequality to 
\eqref{eq lq inequality with star} yields
\begin{equation*} 
\begin{split}
  J^{\tau}(\pi)-J^{\tau}(\pi') 
 & \le \E\int_0^T\int\frac{\delta H^{0}_s}{\delta m}(\Theta_s(\pi), \pi_s, a)(\pi_s-\pi'_s)(da)\,ds
+\tau \E\int_0^T[h(\pi_s)-h(\pi'_s)]\,ds
\\
&= \E\int_0^T \int\frac{\delta H^{\tau}_s}{\delta m}(\Theta_s(\pi), \pi_s, a)(\pi_s-\pi'_s)(da)\,ds - \tau\E\int_{0}^{T}\left[D_h(\pi'_s|\pi_s) \right]\,ds\,.
\end{split}
\end{equation*}
where the last identity used 
\begin{equation*}
\E\int_{0}^{T}\left[h(\pi_s) - h(\pi'_s)\right]\,ds 
=\E\int_{0}^{T}\left[-D_h(\pi'_s|\pi_s) + \int \frac{\delta h}{\delta m}(\pi_s,a)(\pi_s-\pi'_s)(da)\right]\,ds\,.
\end{equation*}
This completes the proof. 
\end{proof}

\subsection{Proof of Theorem~\ref{thr convergence of modified MSA for convex case}}

\label{sec:proof-of-convergence}
\begin{proof}[Proof of Theorem~\ref{thr convergence of modified MSA for convex case}]
For all  $s\in [0,T]$ and   $\omega \in \Omega$,  define 
\[
\mathfrak C \ni m \mapsto G_s(m) = \frac1\lambda\int \frac{\delta H^\tau_s}{\delta m}(\Theta_s(\pi), \pi_s, a)(m - \pi_s)(da) \in \mathbb R\,,
\]
which  is linear and hence convex. 
The three point lemma applied to $G_s$ (see Lemma~\ref{lem three point} or~\cite[Lemma 2]{aubin2022mirror})
implies 
for all  $s\in [0,T]$, $\omega \in \Omega$ and $\pi,\pi'\in \mathfrak C$, 
\begin{equation*}
\begin{split}
&  \int\frac{\delta H^{\tau}_s}{\delta m}(\Theta_s(\pi), \pi_s, a)( \pi'_s-\pi_s)(da) + \lambda\,D_h(  \pi'_s|\pi_s)
\\
& \ge 
\int\frac{\delta H^{\tau}_s}{\delta m}(\Theta_s(\pi), \pi_s, a)(\overline \pi_s-\pi_s)(da)+  \lambda \,D_h(\pi'_s|\overline \pi_s)  
+ \lambda\,D_h(\overline \pi_s|\pi_s) \,,
\end{split}
\end{equation*}
provided that    $\overline \pi$  satisfies 
$$
\overline \pi \in \argmin_{m\in \mathfrak C}\int\frac{\delta H^{\tau}_s}{\delta m}(\Theta_s(\pi), \pi_s, a)(m -\pi_s)(da) + \lambda\,D_h(m|\pi_s)\,.
$$
Hence let 
$\pi'\in \mathcal{A}_{\mathfrak C}$ such that 
$\E\int_0^TD_h(\pi'_s|\pi^{n}_s)\,ds<\infty$
for all $n\in \N\cup\{0\}$.
By taking $\pi=\pi^n$ and $\overline \pi=\pi^{n+1}$,
and using 
Theorem \ref{th estimate for difference of J},  
\eqref{eq update the control bregman msa}
and the fact that $L\le \lambda $, 
\begin{equation} \label{eq three point plus smoothness}
\begin{split}
J^\tau(\pi^{n+1}) & \le 
J^\tau(\pi^{n}) +   \E\int_0^T\int\frac{\delta H^{\tau}_s}{\delta m}(\Theta_s(\pi^{n}), \pi^{n}_s, a)(\pi^{n+1}_s-\pi^{n}_s)(da)\,ds
+ L \E\int_0^TD_h(\pi^{n+1}_s|\pi^{n}_s)\,ds  
\\
&\le  J^\tau(\pi^{n}) +   \E\int_0^T\int\frac{\delta H^{\tau}_s}{\delta m}(\Theta_s(\pi^{n}), \pi^{n}_s, a)(\pi'_s-\pi^{n}_s)(da)\,ds\\
&\quad+(L-\lambda)\E\int_0^TD_h(\pi^{n+1}_s|\pi^{n}_s)\,ds + \lambda\E\int_0^TD_h(\pi'_s|\pi^{n}_s)\,ds 
- \lambda\E\int_0^TD_h(\pi'_s|\pi^{n+1}_s)\,ds\\
&\leq J^\tau(\pi^{n}) +   \E\int_0^T\int\frac{\delta H^{\tau}_s}{\delta m}(\Theta_s(\pi^{n}), \pi^{n}_s, a)(\pi'_s-\pi^{n}_s)(da)\,ds\\
&\quad + \lambda\E\int_0^TD_h(\pi'_s|\pi^{n}_s)\,ds - \lambda\E\int_0^TD_h(\pi'_s|\pi^{n+1}_s)\,ds\,.
\end{split}
\end{equation}
By Theorem \eqref{th relative convexity} and the assumption that 
$\E\int_{0}^{T}D_h(\pi^{\ast}_s|\pi^n_s)\,ds<\infty$, 
\begin{equation*}
\begin{split}
J^{\tau}(\pi') - \tau\E\int_{0}^{T}D_h(\pi'_s|\pi^n_s)\,ds  
&
\geq J^{\tau}(\pi^n) + \E\int_0^T\int\frac{\delta H^{\tau}_s}{\delta m}(\Theta_s(\pi^n), \pi^n_s, a)(\pi'_s-\pi^n_s)(da)\,ds  \,.
\end{split}
\end{equation*}
Plugging this into \eqref{eq three point plus smoothness} gives 
\begin{equation}\label{eq key inequlity conv}
\begin{split}
J^\tau(\pi^{n+1})  -  J^{\tau}(\pi') &\leq \lambda \left(\E\int_0^TD_h(\pi'_s|\pi^{n}_s)\,ds - \E\int_0^TD_h(\pi'_s|\pi^{n+1}_s)\,ds\,   \right)   - \tau\E\int_{0}^{T}D_h(\pi'_s|\pi^n_s) \,ds\,. 
\end{split}
\end{equation}
When 
$\tau=0$,  \eqref{eq key inequlity conv} implies
\begin{equation}
\sum_{n=0}^{m-1} (J^\tau(\pi^{n+1})  -  J^{\tau}(\pi^{\ast})) \leq \lambda \left(\E\int_0^TD_h(\pi'_s|\pi^{0}_s)\,ds - \E\int_0^TD_h(\pi'_s|\pi^{m}_s)\,ds\,   \right) \,.
\end{equation}
By
Theorem~\ref{thm energy dissipation mirr},
$J^\tau(\pi^{m})\leq J^\tau(\pi^{n+1})$ for $n=0,\ldots,m-1$, and hence 
\begin{equation}
m \left( J^\tau(\pi^{m})  -  J^{\tau}(\pi') \right) \leq \lambda \E\int_0^TD_h(\pi'_s|\pi^{0}_s)\,ds  \,.
\end{equation}

Let $\tau>0$ and 
fix $\pi^\ast\in \argmin_{\pi\in \mathcal A_{\mathfrak C}}J^\tau (\pi)$
such that 
$\E\int_0^TD_h(\pi^\ast_s|\pi^{n}_s)\,ds<\infty$ for all $n\in \N\cup\{0\}$.
Setting 
$\pi'=\pi^\ast$ in \eqref{eq key inequlity conv} and using  $ J^\tau(\pi^{n+1})  \ge   J^{\tau}(\pi^{\ast})$  implies
\begin{equation}
\E\int_0^TD_h(\pi^\ast_s|\pi^{n+1}_s)\,ds  \leq \left(1- \frac{\tau}{\lambda}\right)\E\int_{0}^{T} D_h(\pi^{\ast}_s|\pi^n_s) \,ds \,. 
\end{equation}
Iterating this inequality for all $m$  times gives 
\begin{equation}
\label{}
\E\int_0^TD_h(\pi^\ast_s|\pi^{m}_s)\,ds  \leq \left(1- \frac{\tau}{\lambda}\right)^m\E\int_{0}^{T} D_h(\pi^{\ast}_s|\pi^0_s) \,ds \,. 
\end{equation}
By 
\eqref{eq key inequlity conv},
\begin{equation}
J^\tau(\pi^{m+1})  -  J^{\tau}(\pi^{\ast}) \leq \lambda \E\int_0^TD_h(\pi^\ast_s|\pi^{m}_s)\,ds \,.    
\end{equation}
This concludes the proof.
\end{proof}

\subsection{Proofs of Propositions \ref{prop:relative_entropy},  \ref{prop:chi_2} and \ref{prop:eot}}
\label{sec:example_proof}

To prove Proposition \ref{prop:relative_entropy},
we   first show that  the relative entropy has a flat derivative on the set of probability measures whose  
log-density admits a polynomial growth.

\begin{lemma}
\label{lemma:derivative_of_entropy_on_C}
Let $p\in \N\cup\{0\}$, 
let 
$(A, \rho_A)$ be a metric space, 
let  $\mathcal{P}_p(A)$ be  the set of probability   measures on $A$  with finite $p$-moments,
and 
let  $B_p(A)$ be the space of measurable functions $f :A \rightarrow \mathbb R$
such that  $\sup_{a\in A }\frac{|f(a)|}{1+\rho_A(a_0,a)^p}<\infty$ for some $a_0\in A$. 
Let 
$\lambda \in \mathcal P_p(A)$,
let    $\mathfrak C = \{m \in \mathcal P_p(A) \mid 
m\ll \lambda, 
\ln \frac{\mathrm d m}{\mathrm d \lambda}\in B_p( A)\}$
and let 
$\operatorname{KL}(\cdot|\cdot):\mathfrak C\times \mathfrak C \to \R$ be     
such that 
\begin{equation*}
\operatorname{KL}(\mu |\nu) \coloneqq \int \ln\frac{\mathrm d\mu }{\mathrm d\nu}(a)   \mu (da),\,\quad  \mu,\nu\in  \mathfrak C\, .	
\end{equation*}
Then for all $\mu, \nu \in \mathfrak C$ and $a\in A$,
\begin{equation}
\frac{\delta \operatorname{KL}(\cdot|\nu)}{\delta m}(\mu,a) = \ln \frac{\mathrm d\mu}{\mathrm d\nu}(a)- \operatorname{KL}(\mu|\nu)\,.	
\end{equation}	
\end{lemma}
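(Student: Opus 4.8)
The plan is to verify the defining conditions of the flat derivative from Definition~\ref{def flat derivative} for the candidate $\frac{\delta \operatorname{KL}(\cdot|\nu)}{\delta m}(\mu,a) = \ln\frac{\mathrm d\mu}{\mathrm d\nu}(a) - \operatorname{KL}(\mu|\nu)$. Concretely, I must check three things: (i) the candidate is a well-defined measurable function on $\mathfrak C \times A$; (ii) for any $\mu,\mu' \in \mathfrak C$ the integrability condition $\int \big|\frac{\delta \operatorname{KL}(\cdot|\nu)}{\delta m}(\mu,a)\big|\,\mu'(da) < \infty$ holds, and the directional-derivative identity \eqref{eq in def of flat der} is satisfied along $\mu^\varepsilon = \mu + \varepsilon(\mu'-\mu)$; (iii) the normalisation $\int \frac{\delta \operatorname{KL}(\cdot|\nu)}{\delta m}(\mu,a)\,\mu(da) = 0$, which is immediate since $\int \big(\ln\frac{\mathrm d\mu}{\mathrm d\nu}(a) - \operatorname{KL}(\mu|\nu)\big)\mu(da) = \operatorname{KL}(\mu|\nu) - \operatorname{KL}(\mu|\nu) = 0$.

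For (i) and (ii), the key observation is that membership in $\mathfrak C$ gives us $\ln\frac{\mathrm d m}{\mathrm d\lambda} \in B_p(A)$ for each $m \in \mathfrak C$, hence $\ln\frac{\mathrm d\mu}{\mathrm d\nu} = \ln\frac{\mathrm d\mu}{\mathrm d\lambda} - \ln\frac{\mathrm d\nu}{\mathrm d\lambda} \in B_p(A)$ as well, so it is bounded by $C(1+\rho_A(a_0,a)^p)$; since every measure in $\mathfrak C$ has finite $p$-th moment, all the relevant integrals converge, including $\operatorname{KL}(\mu|\nu) = \int \ln\frac{\mathrm d\mu}{\mathrm d\nu}\,d\mu$, which is finite. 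Next, I would compute the difference quotient. Write $\mu^\varepsilon = \mu + \varepsilon(\mu'-\mu)$, which lies in $\mathfrak C$ by convexity of $\mathfrak C$ (one checks $\frac{\mathrm d\mu^\varepsilon}{\mathrm d\lambda} = (1-\varepsilon)\frac{\mathrm d\mu}{\mathrm d\lambda} + \varepsilon\frac{\mathrm d\mu'}{\mathrm d\lambda}$, whose logarithm stays in $B_p(A)$ using $\ln\big((1-\varepsilon)u + \varepsilon v\big)$ is sandwiched between $\ln u$ and $\ln v$ up to constants). Then
\begin{equation*}
\frac{\operatorname{KL}(\mu^\varepsilon|\nu) - \operatorname{KL}(\mu|\nu)}{\varepsilon}
= \frac{1}{\varepsilon}\left(\int \ln\frac{\mathrm d\mu^\varepsilon}{\mathrm d\nu}\,d\mu^\varepsilon - \int \ln\frac{\mathrm d\mu}{\mathrm d\nu}\,d\mu\right)\,,
\end{equation*}
and I would split this into the ``linear in the measure'' part $\frac1\varepsilon\int \ln\frac{\mathrm d\mu}{\mathrm d\nu}\,d(\mu^\varepsilon-\mu) = \int \ln\frac{\mathrm d\mu}{\mathrm d\nu}\,d(\mu'-\mu)$ plus the ``entropy'' correction $\frac1\varepsilon\int\big(\ln\frac{\mathrm d\mu^\varepsilon}{\mathrm d\nu} - \ln\frac{\mathrm d\mu}{\mathrm d\nu}\big)\,d\mu^\varepsilon$.

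The main obstacle is controlling this second term and showing it vanishes as $\varepsilon \searrow 0$. Writing $u = \frac{\mathrm d\mu}{\mathrm d\lambda}$, $u' = \frac{\mathrm d\mu'}{\mathrm d\lambda}$, and $u^\varepsilon = (1-\varepsilon)u + \varepsilon u'$, the correction equals $\frac1\varepsilon\int \ln\frac{u^\varepsilon}{u}\,d\mu^\varepsilon$. I would use the elementary bound $0 \le \ln\frac{u^\varepsilon}{u} = \ln\big(1 + \varepsilon\frac{u'-u}{u}\big)$ combined with $\ln(1+x) \le x$ and a matching lower bound, to get that $\frac1\varepsilon \ln\frac{u^\varepsilon}{u}$ is bounded (for $\varepsilon$ small) by something integrable against $\mu^\varepsilon$, e.g. comparable to $\frac{|u'-u|}{u^\varepsilon}$; then $\int \frac1\varepsilon \ln\frac{u^\varepsilon}{u}\,d\mu^\varepsilon$ converges to $\int (u'-u)\,d\lambda = \mu'(A) - \mu(A) = 0$ by dominated convergence. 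A cleaner alternative is to invoke Remark~\ref{rmk:FTC}: once one knows the candidate is the flat derivative, the fundamental theorem of calculus representation follows, but since here we are establishing the flat derivative itself, the direct difference-quotient computation with a careful domination argument (exploiting that $\ln\frac{u'}{u} \in B_p(A)$ so the integrands have controlled growth and are dominated uniformly in small $\varepsilon$ by a $\mu'$-and-$\mu$-integrable function) is the route I would take. Finally I would assemble the pieces: the difference quotient converges to $\int \ln\frac{\mathrm d\mu}{\mathrm d\nu}(a)\,(\mu'-\mu)(da) = \int \big(\ln\frac{\mathrm d\mu}{\mathrm d\nu}(a) - \operatorname{KL}(\mu|\nu)\big)(\mu'-\mu)(da)$, the last equality because $\mu'-\mu$ has total mass zero, which is exactly \eqref{eq in def of flat der} with the claimed derivative.
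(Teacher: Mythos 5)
Your overall architecture is the same as the paper's: check the normalisation and integrability (which your $B_p$/finite $p$-moment observations handle correctly), split the difference quotient into the linear term $\int \ln\tfrac{\mathrm d\mu}{\mathrm d\nu}\,\mathrm d(\mu'-\mu)$ plus the correction $\tfrac1\varepsilon\int \ln\tfrac{\mathrm d\mu^\varepsilon}{\mathrm d\mu}\,\mathrm d\mu^\varepsilon=\tfrac1\varepsilon\operatorname{KL}(\mu^\varepsilon|\mu)$, and show the correction vanishes. The gap is in how you propose to kill the correction term. First, the pointwise claim $0\le \ln\tfrac{u^\varepsilon}{u}$ is false (only the integrated quantity, a KL divergence, is nonnegative). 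More seriously, the dominated-convergence step does not go through with the envelope you suggest: the elementary bounds give $\tfrac{u'-u}{u^\varepsilon}\le \tfrac1\varepsilon\ln\tfrac{u^\varepsilon}{u}\le \tfrac{u'-u}{u}$, and on the set where $u'\gg u$ the quantity $\tfrac1\varepsilon\ln\tfrac{u^\varepsilon}{u}$ is \emph{not} comparable to $\tfrac{|u'-u|}{u^\varepsilon}$ (their ratio is of order $\ln(\varepsilon u'/u)$, unbounded). The usable upper envelope, integrated against $\mathrm d\mu^\varepsilon=u^\varepsilon\,\mathrm d\lambda$, produces $\varepsilon\int \tfrac{(u'-u)^2}{u}\,\mathrm d\lambda$, i.e.\ the $\chi^2$-divergence of $\mu'$ from $\mu$, which is \emph{not} finite under the lemma's hypotheses: with $A=\mathbb R$, $p=1$, $\lambda(\mathrm da)\propto e^{-2|a|}\mathrm da$, $\tfrac{\mathrm d\mu}{\mathrm d\lambda}\propto e^{-|a|}$, $\tfrac{\mathrm d\mu'}{\mathrm d\lambda}\propto e^{|a|}$, all log-densities lie in $B_1(A)$ and all measures have finite first moment, yet $\int \tfrac{\mathrm d\mu'}{\mathrm d\mu}\,\mathrm d\mu'=\infty$. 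So there is no uniform-in-$\varepsilon$, two-sided integrable dominating function of the kind your argument needs, and plain dominated convergence is exactly the step that fails.

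The paper circumvents this with two \emph{one-sided} arguments, which is the idea missing from your proposal. For the lower bound it uses $x\ln x\ge x-1$ applied to $\tfrac{\mathrm d\mu^\varepsilon}{\mathrm d\mu}$, so the correction is $\ge 0$ for every $\varepsilon$ and the liminf inequality is immediate. For the upper bound it works relative to $\lambda$, uses convexity of $x\mapsto x\ln x$ to note that the difference quotient $\tfrac1\varepsilon\big[\phi(u^\varepsilon)-\phi(u)\big]$ is monotone in $\varepsilon$ and hence bounded above by its value at $\varepsilon=1$; that envelope is $\lambda$-integrable precisely because the log-densities are in $B_p(A)$ and the measures have finite $p$-moments, and then the \emph{reverse} Fatou lemma (which needs only an upper integrable envelope, not two-sided domination) gives the limsup inequality, with the pointwise limit computed from the derivative of $x\mapsto x\ln x$. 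Your decomposition and the claimed limit are correct, and your proof can be repaired exactly along these lines, but as written the domination argument is the step that would fail.
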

\begin{proof}
Note that $\mathfrak C$ is convex,
and 
for all $\mu, \mu'\in  \mathfrak C$, 
$\int \big|\ln \frac{\mathrm d\mu}{\mathrm d\nu}(a)\big|\mu'(da)<\infty$ as well as 
$\int \big(\ln \frac{\mathrm d\mu}{\mathrm d\nu}(a)- \operatorname{KL}(\mu|\nu)\big)\mu(da)=0$.
Let $\mu'\in \mathfrak C$, and for each $\varepsilon\in (0,1)$, 
let $\mu^\varepsilon = \mu + \varepsilon(\mu'-\mu)$. 
It remains to  show that
\begin{equation}
\label{eq entropy der}
\lim_{\varepsilon \to 0} \frac1\varepsilon \left(\operatorname{KL}(\mu^\varepsilon|\nu) - \operatorname{KL}(\mu|\nu)\right)
= \int \ln \frac{\mathrm d\mu}{\mathrm d\nu}(a) (\mu' - \mu)(da)\,. 	
\end{equation}

We   first prove that 
\begin{equation}
\label{eq entropy der lb}
\limsup_{\varepsilon\searrow 0}  \frac1\varepsilon \left(\operatorname{KL}(\mu^\varepsilon|\nu)-\operatorname{KL}(\mu|\nu)\right) 
\geq \int \ln \frac{\mathrm d\mu}{\mathrm d\nu}(a) (\mu' - \mu)(da)\,.	
\end{equation}
Indeed, for all $\varepsilon \in (0,1)$,
\[
\begin{split}
& \frac1\varepsilon \left(\operatorname{KL}(\mu^\varepsilon|\nu)-\operatorname{KL}(\mu|\nu)\right) 
=  \frac1\varepsilon\int\left[
\frac{\mathrm d\mu^\varepsilon}{\mathrm d\nu}(a)\ln \frac{\mathrm d\mu^\varepsilon}{\mathrm d\nu}(a) - \frac{\mathrm d\mu}{\mathrm d\nu}(a)\ln \frac{\mathrm d\mu}{\mathrm d\nu}(a)
\right]\nu(da) \\
& = \frac1\varepsilon \int \bigg[\bigg(\frac{\mathrm d\mu^\varepsilon}{\mathrm d\nu}(a) - 	\frac{\mathrm d\mu}{\mathrm d\nu}(a)\bigg)\ln \frac{\mathrm d\mu}{\mathrm d\nu}(a) + \frac{\mathrm d\mu^\varepsilon}{\mathrm d\nu}(a)\bigg(\ln \frac{\mathrm d\mu^\varepsilon}{\mathrm d\nu}(a)-\ln \frac{\mathrm d\mu}{\mathrm d\nu}(a)\bigg)\bigg]\nu(da)\\
& = \int \ln \frac{\mathrm d\mu}{\mathrm d\nu}(a)(\mu'-\mu)(da) + \frac1\varepsilon \int \frac{\mathrm d \mu^\varepsilon}{\mathrm d \mu}(a) \ln \frac{\mathrm d \mu^\varepsilon}{\mathrm d \mu}(a)\mu(da)\,.
\end{split}
\]
Since $x\ln x \geq x -1$ for $x> 0$, it holds   for all $\varepsilon\in (0,1)$ that 
\[
\begin{split}
  \frac1\varepsilon \left(\operatorname{KL}(\mu^\varepsilon|\nu)-\operatorname{KL}(\mu|\nu)\right)  & \geq \int \ln \frac{\mathrm d\mu}{\mathrm d\nu}(a)(\mu'-\mu)(da) + \frac1\varepsilon \int \bigg(\frac{\mathrm d \mu^\varepsilon}{\mathrm d \mu}(a) -1\bigg)\mu(da) \\
& = \int \ln \frac{\mathrm d\mu}{\mathrm d\nu}(a)(\mu'-\mu)(da) + \frac1\varepsilon \bigg( \int \mu^\varepsilon(da) - \int \mu(da)\bigg)\,.
\end{split}
\]
Taking the limit inferior for $\varepsilon\to 0$ yields
\eqref{eq entropy der lb}.

Next we   show that 
\begin{equation}
\label{eq entropy der limsup ub}
\limsup_{\varepsilon \searrow 0}\frac1\varepsilon \left(\operatorname{KL}(\mu^\varepsilon|\nu)-\operatorname{KL}(\mu|\nu)\right) 
\leq \int \ln \frac{\mathrm d\mu}{\mathrm d\nu}(a) (\mu' - \mu)(da)\,.	
\end{equation} 
To prove~\eqref{eq entropy der limsup ub},
note that  
\begin{equation}
\label{eq entropy der limsup ub 1}
\begin{split}
& \frac1\varepsilon \left(\operatorname{KL}(\mu^\varepsilon|\nu) - \operatorname{KL}(\mu|\nu)\right) \\
& = \frac {1}{\varepsilon} 
\int \left[\big(\ln \frac{\mathrm d \mu^\varepsilon}{\mathrm d \lambda}(a) -  \ln\frac{\mathrm d \nu}{\mathrm d\lambda}(a)\big)\frac{\mathrm d\mu^\varepsilon}{\mathrm d\lambda}(a)  - \big(\ln \frac{\mathrm d \mu}{\mathrm d \lambda}(a) - \ln \frac{\mathrm d \nu}{\mathrm d \lambda}(a)\big) \frac{\mathrm d \mu}{\mathrm d \lambda}(a)\right] \lambda(da)
 = \int f_\varepsilon(a) \,\lambda(da)\,,
\end{split}
\end{equation}
where
$$
f_\varepsilon (a) = \frac1\varepsilon\left[\frac{\mathrm d \mu^\varepsilon}{\mathrm d \lambda} \ln \frac{\mathrm d \mu^\varepsilon}{\mathrm d \lambda}(a) - \frac{\mathrm d \mu}{\mathrm d \lambda}\ln \frac{\mathrm d \mu}{\mathrm d \lambda}(a) - \ln \frac{\mathrm d \nu}{\mathrm d \lambda}(a)\big( \frac{\mathrm d \mu^\varepsilon}{\mathrm d \lambda} - \frac{\mathrm d \mu}{\mathrm d \lambda}(a)\big)    \right]\,.
$$
Observe that  
\begin{equation}
\label{eq entropy der limsup ub 2}
- \frac1\varepsilon \ln \frac{\mathrm d \nu}{\mathrm d \lambda} (a)\left(\frac{\mathrm d \mu^\varepsilon}{\mathrm d \lambda}(a) - \frac{\mathrm d \mu}{\mathrm d \lambda}(a)\right) 
= -\ln \frac{\mathrm d \nu}{\mathrm d \lambda}(a)\left(\frac{\mathrm d \mu'}{\mathrm d \lambda}(a) - \frac{\mathrm d \mu}{\mathrm d \lambda}(a)\right)\,.	
\end{equation}
Moreover, using the convexity of   $a\mapsto a\ln a$ on $(0,\infty)$ and the definition of $\mu^\varepsilon$,  
\begin{equation}
\label{eq entropy der limsup ub 3}	
\frac1\varepsilon\left[
\frac{\mathrm d \mu^\varepsilon}{\mathrm d \lambda}(a)\ln\frac{\mathrm d \mu^\varepsilon}{\mathrm d \lambda}(a) - \frac{\mathrm d \mu}{\mathrm d \lambda}(a)\ln \frac{\mathrm d \mu}{\mathrm d \lambda}(a)
\right] 
\leq \frac{\mathrm d \mu'}{\mathrm d \lambda}(a)\ln\frac{\mathrm d \mu'}{\mathrm d \lambda}(a) - \frac{\mathrm d \mu}{\mathrm d \lambda}(a)\ln \frac{\mathrm d \mu}{\mathrm d \lambda}(a)\,.
\end{equation}
This implies that 
$f_\varepsilon \le  g$ with $g\coloneqq  \frac{\mathrm d \mu'}{\mathrm d \lambda}\ln\frac{\mathrm d \mu'}{\mathrm d \lambda} - \frac{\mathrm d \mu}{\mathrm d \lambda}\ln \frac{\mathrm d \mu}{\mathrm d \lambda}
-\ln \frac{\mathrm d \nu}{\mathrm d \lambda} (\frac{\mathrm d \mu'}{\mathrm d \lambda} - \frac{\mathrm d \mu}{\mathrm d \lambda})
$, and   
\begin{align*}
\begin{split}
& \int |g(a)|\lambda (da) 
= \int \left|\frac{\mathrm d \mu'}{\mathrm d \lambda}(a)\ln\frac{\mathrm d \mu'}{\mathrm d \lambda}(a) - \frac{\mathrm d \mu}{\mathrm d \lambda}(a)\ln \frac{\mathrm d \mu}{\mathrm d \lambda}(a) -\ln \frac{\mathrm d \nu}{\mathrm d \lambda}(a) (\frac{\mathrm d \mu'}{\mathrm d \lambda}(a) - \frac{\mathrm d \mu}{\mathrm d \lambda}(a)) \right|\lambda(da) \\
&\le 
\int 
\left(\left| 
\frac{\mathrm d \mu'}{\mathrm d \lambda}(a)\left(\ln\frac{\mathrm d \mu'}{\mathrm d \lambda}(a) -
\ln \frac{\mathrm d \nu}{\mathrm d \lambda}(a)\right)\right|
+
\left|  \frac{\mathrm d \mu}{\mathrm d \lambda}(a)
\left(\ln \frac{\mathrm d \mu}{\mathrm d \lambda}(a) -\ln \frac{\mathrm d \nu}{\mathrm d \lambda}(a) \right) \right|
\right)\lambda(da) \\
&\le  
\int  \left|\ln\frac{\mathrm d \mu'}{\mathrm d \lambda}(a) -
\ln \frac{\mathrm d \nu}{\mathrm d \lambda}(a)\right| \mu'(da)
+
\int \left| 
\ln \frac{\mathrm d \mu}{\mathrm d \lambda}(a) -\ln \frac{\mathrm d \nu}{\mathrm d \lambda}(a)  \right|\mu(da)
< \infty\,,
\end{split}	
\end{align*}
as $\nu, \mu,\mu'\in \mathfrak C$.
Hence combining \eqref{eq entropy der limsup ub 1} and the reverse Fatou's lemma yields 
\begin{equation}
\label{eq entropy der limsup ub 4}
\begin{split}
& \limsup_{\varepsilon\searrow 0} \frac1\varepsilon \left(\operatorname{KL}(\mu^\varepsilon|\nu) - \operatorname{KL}(\mu|\nu)\right) \\
& \leq \int  \limsup_{\varepsilon\searrow 0} \frac1\varepsilon\left[\frac{\mathrm d \mu^\varepsilon}{\mathrm d \lambda} \ln \frac{\mathrm d \mu^\varepsilon}{\mathrm d \lambda}(a) - \frac{\mathrm d \mu}{\mathrm d \lambda}\ln \frac{\mathrm d \mu}{\mathrm d \lambda}(a) - \ln \frac{\mathrm d \nu}{\mathrm d \lambda}(a)\big( \frac{\mathrm d \mu^\varepsilon}{\mathrm d \lambda} - \frac{\mathrm d \mu}{\mathrm d \lambda}(a)\big)    \right]\,\lambda(da)\\
& = \int 
\left( \limsup_{\varepsilon\searrow 0} \frac1\varepsilon\left [\frac{\mathrm d \mu^\varepsilon}{\mathrm d \lambda} \ln \frac{\mathrm d \mu^\varepsilon}{\mathrm d \lambda}(a) - \frac{\mathrm d \mu}{\mathrm d \lambda}\ln \frac{\mathrm d \mu}{\mathrm d \lambda}(a)\right] -\ln \frac{\mathrm d \nu}{\mathrm d \lambda}(a) (\frac{\mathrm d \mu'}{\mathrm d \lambda}(a) - \frac{\mathrm d \mu}{\mathrm d \lambda}(a))
\right)\,\lambda(da)\,.
\end{split}
\end{equation}
Using  the derivative of $x\mapsto x \log x$ at $x>0$,
\[
\lim_{\varepsilon \to 0} \frac1\varepsilon\left( \frac{\mathrm d \mu^\varepsilon}{\mathrm d \lambda}(a) \ln \frac{\mathrm d \mu^\varepsilon}{\mathrm d \lambda}(a) - \frac{\mathrm d \mu}{\mathrm d\lambda}(a)\ln \frac{\mathrm d \mu}{\mathrm d\lambda}(a) \right)  
= (1 + \ln \frac{\mathrm d \mu}{\mathrm d\lambda}(a))(\frac{\mathrm d \mu'}{\mathrm d\lambda}(a)-\frac{\mathrm d \mu}{\mathrm d\lambda}(a))\,,
\]
which together with 
\eqref{eq entropy der limsup ub 4} implies that 
\[
\begin{split}
& \limsup_{\varepsilon\searrow 0} \frac1\varepsilon \left(\operatorname{KL}(\mu^\varepsilon|\nu) - \operatorname{KL}(\mu|\nu)\right) 
\leq 	\int \left( \ln \frac{\mathrm d \mu}{\mathrm d\lambda}(a)(\frac{\mathrm d \mu'}{\mathrm d\lambda}(a)-\frac{\mathrm d \mu}{\mathrm d\lambda}(a))-\ln \frac{\mathrm d \nu}{\mathrm d \lambda}(a) (\frac{\mathrm d \mu'}{\mathrm d \lambda}(a) - \frac{\mathrm d \mu}{\mathrm d \lambda}(a))
\right)\,\lambda(da)\\
& = \int \left[\ln \frac{\mathrm d \mu}{\mathrm d\lambda}(a) - \ln \frac{\mathrm d \nu}{\mathrm d \lambda}(a) \right] (\mu'-\mu)(da)\,.
\end{split}
\]
This proves    \eqref{eq entropy der limsup ub}.
Combining \eqref{eq entropy der lb} and  \eqref{eq entropy der limsup ub}
yields  
\eqref{eq entropy der} and finishes the proof. 
\end{proof}

\begin{proof}[Proof of Proposition \ref{prop:relative_entropy}]
The flat derivative  of $h$ on $\mathfrak C$ has been shown in   Lemma~\ref{lemma:derivative_of_entropy_on_C}. 
The  associated Bregman divergence   is  given by
\[
\begin{split}
D_h(m'|m)
& = h(m')-h(m)-\int \frac{\delta h}{\delta m}(m,a)(m'-m)(da)\\
& = \int \ln \frac{\mathrm d m'}{\mathrm d \varrho }(a)m'(da) - \int \ln \frac{\mathrm d m}{\mathrm d \varrho }(a)m(da) - \int\ln \frac{\mathrm d m}{\mathrm d \varrho }(a)(m'-m)(da)
 = \int \ln \frac{\mathrm d m'}{\mathrm d m }(a)m'(da)\,. 
\end{split}
\]

We now  assume  that   $\pi^0\in \mathcal{A}_{\mathfrak C}$ satisfies $\mathbb E\int_0^T \big\| \ln \frac{\mathrm d \pi^0_t}{\mathrm d \varrho} \big\|_{L^\infty(A)} dt<\infty$
and  verify Assumption \ref{ass iterate_wd}.
By \cite[Lemma 1.4.3]{dupuis1997weak}, 
$\pi^1$ in \eqref{eq update the control bregman msa}
is given by
\begin{equation*}
\pi_t^{1}(da)= \frac{\exp{\left(-\frac1\lambda \frac{\delta H^{\tau}_t}{\delta m}(\Theta_t(\pi^0),  \pi^0_t, a) \right)}}{\int  \exp{\left(-\frac1\lambda \frac{\delta H^{\tau}_t}{\delta m}(\Theta_t(\pi^0),  \pi^0_t, a') \right)\pi^0_t(da')}}\pi^0_t(da)\,,
\end{equation*}
which is progressively measurable. 
For each $t\in [0,T]$,
\begin{align*}
\ln \frac{\mathrm d \pi^1_t}{\mathrm d \varrho}(a) 
& =-\frac1\lambda \frac{\delta H^{\tau}_t}{\delta m}(\Theta_t(\pi^0),  \pi^0_t, a)
-\ln\left(\int  \exp{\left(-\frac1\lambda \frac{\delta H^{\tau}_t}{\delta m}(\Theta_t(\pi^0),  \pi^0_t, a') \right)\pi^0_t(da')}\right)
+\ln \frac{\mathrm d \pi^0_t }{\mathrm d \varrho }(a)\,. 
\end{align*}
Taking the sup-norm over $a$ on both sides and using 
the definition of $H^\tau$ in  \eqref{eq: hamiltonian} and 
the boundedness of $\frac{\delta b}{\delta m}, 
\frac{\delta \sigma}{\delta m}$ and $\frac{\delta f}{\delta m}$ show   that there exists $C\ge 0$ such that 
\begin{align*}
\left\| \ln \frac{\mathrm d \pi^1_t}{\mathrm d \varrho}\right\|_{L^\infty(A)}
& \le C\bigg(1+|Y_t(\pi^0)|+|Z_t(\pi^0)|
+
\left\|   \ln \frac{\mathrm d \pi^0_t }{\mathrm d\varrho }\right\|_{L^\infty(A)} \bigg)  \,,
\end{align*}
which together  with the  square integrability of $Y(\pi^0)$ and $Z(\pi^0)$ implies  that 
$\mathbb E\int_0^T \big\| \ln \frac{\mathrm d \pi^1_t}{\mathrm d \varrho} \big\|_{L^\infty(A)} dt<\infty$.
For fixed $\nu\in \mathfrak C$, 
the fact that 
$\mathbb E\int_0^T  D_h(\pi^1_t|\nu) \, d t < \infty$ follows from  
\begin{align*}
\mathbb{E}\int_0^T h(\pi^1_t)dt
&= \mathbb E\int_0^T  \int\left|   \ln \frac{\mathrm d \pi^1_t}{\mathrm d \varrho}(a) \right| \pi^1(da) dt 
\le \mathbb E\int_0^T \left\| \ln \frac{\mathrm d \pi^1_t}{\mathrm d \varrho} \right\|_{L^\infty(A)} dt<\infty\,,
\end{align*}
and   
\begin{align*}
\left|\mathbb E\int_0^T  \int \frac{\delta h}{\delta m }(\nu,a)\pi^1_t (da) dt \right|
&=
\left|\mathbb E\int_0^T  \int \ln \frac{\delta \nu}{\delta \varrho }(a)\pi^1_t (da) dt \right|
\le E\int_0^T  \left\| \ln \frac{\mathrm d \nu}{\mathrm d \varrho} \right\|_{L^\infty(A)}dt <\infty\, .
\end{align*}
Finally, 
\begin{align*}
\mathbb E\int_0^T  D_h( \pi^1_t|\pi^0_t) dt  
& =\mathbb E\int_0^T \left(  \int \left(\ln \frac{\mathrm d \pi^1_t}{\mathrm d \varrho }(a)-\ln \frac{\mathrm d \pi^0_t}{\mathrm d \varrho }\right) \pi^1_t(da) \right)dt  
\\
& \le  \mathbb E\int_0^T    \left(\left\|  \ln \frac{\mathrm d \pi^1_t}{\mathrm d \varrho }\right\|_{L^\infty(A)}+\left\|\ln \frac{\mathrm d \pi^0_t}{\mathrm d \varrho }\right\|_{L^\infty(A)} \right) dt <\infty\,.
\end{align*}
This verifies Assumption \ref{ass iterate_wd} for $n=1$ and also proves that  $\mathbb E\int_0^T \big\| \ln \frac{\mathrm d \pi^1_t}{\mathrm d \varrho} \big\|_{L^\infty(A)} dt<\infty$.
An inductive argument shows that 
Assumption \ref{ass iterate_wd} is satisfied for all $n\in \N\cup\{0\}$. 
\end{proof}

\begin{proof}[Proof of Proposition \ref{prop:chi_2}]
Let $m,m'\in  \mathfrak C$ and for each $\varepsilon\in (0,1)$, 
let $m^\eps = m+\eps (m'-m)$. 
Note that for each  $\varepsilon\in (0,1)$, 
\begin{align*}
h(m^\eps)&= \int \frac{1}{2} \left(\frac{\mathrm d m }{\mathrm d\varrho }(a) -1+\eps \frac{\mathrm d (m'-m) }{\mathrm d\varrho }(a) \right) ^2   \varrho (da)
\\
&= h(m) +  \int  \left(\frac{\mathrm d m }{\mathrm d\varrho }(a) -1\right)  \eps \frac{\mathrm d (m'-m) }{\mathrm d\varrho }(a)    \varrho (da)
 +\eps^2   \int \frac{1}{2} \left( \frac{\mathrm d (m'-m) }{\mathrm d\varrho }(a) \right) ^2   \varrho (da)\,
\end{align*}
which along with $m,m'\in \mathcal P(A)$  implies that for all   $c\in \R$, 
\begin{align*}
\lim_{\eps\to 0}\frac{h(m^\eps)-h(m)}{\eps}&=    \int  \left(\frac{\mathrm d m }{\mathrm d\varrho }(a) -c \right)  
(m'-m) (da)\,.
\end{align*}
As $h(m) = \int \frac{1}{2} \left(\frac{\mathrm d m }{\mathrm d\varrho }(a) \right) ^2   \varrho (da)-1$, 
the divergence $D_h(m'|m)$ is given by
\begin{align*}
D_h(m'|m) &=   \int \frac{1}{2} 
\left[
\left(\frac{\mathrm d m' }{\mathrm d\varrho }(a) \right) ^2 
-\left(\frac{\mathrm d m }{\mathrm d\varrho }(a) \right) ^2  
-2   \frac{\mathrm d m }{\mathrm d\varrho }(a)  
\frac{\mathrm d (m'-m) }{\mathrm d\varrho }(a)  
\right]
\varrho (da)
\\
&
=   \int \frac{1}{2} 
\left(\frac{\mathrm d m' }{\mathrm d\varrho }(a) -\frac{\mathrm d m }{\mathrm d\varrho }(a) \right) ^2 
\varrho (da)  \,.
\end{align*}

We now  assume  that   $\pi^0\in \mathcal{A}_{\mathfrak C}$ satisfies    $\mathbb E\int_0^T\big \|   \frac{\mathrm d \pi^0_t}{\mathrm d \varrho} \big\|^2_{L^2_\varrho(A)} dt<\infty$ 
and  verify Assumption \ref{ass iterate_wd}.
Observe that by the integrability assumptions of  $\frac{\delta b}{\delta m}, 
\frac{\delta \sigma}{\delta m}$, $\frac{\delta f}{\delta m}$ and $\frac{\mathrm d \pi^0}{\mathrm d \varrho}$, 
for a.e.~$(\omega,t)\in  \Omega\times [0,T]$,
$\frac{\delta H^{\tau}_t}{\delta m}(\Theta_t(\pi^0), \pi^{0}_t, \cdot)\in L^2_\varrho(A)$,
and    
$\pi^1$ in \eqref{eq update the control bregman msa}
satisfies 
\begin{equation*} 
\pi_t^{1}= 
\argmin_{m\in \mathfrak C}\int
\left(
\frac{\delta H^{\tau}_t}{\delta m}(\Theta_t(\pi^0), \pi^{0}_t, a)\frac{\mathrm d m}{\mathrm d\varrho}( a) +  
\frac{\lambda}{2} 
\left(\frac{\mathrm d m}{\mathrm d\varrho }(a) -\frac{\mathrm d \pi^0_t }{\mathrm d\varrho }(a) \right) ^2 
\right)   \varrho (da)\,.
\end{equation*}
The first order condition (see e.g., \cite[Section 5.1.1]{bonnans2013perturbation}) shows that for a.e.~$(\omega,t)\in [0,T]\times \Omega$,
\begin{equation} 
\label{eq:first-order_L2}
\left\langle 
\frac{\delta H^{\tau}_t}{\delta m}(\Theta_t(\pi^0), \pi^{0}_t, \cdot) +  
\lambda 
\left(\frac{\mathrm d \pi^1_t}{\mathrm d\varrho }  -\frac{\mathrm d \pi^0_t }{\mathrm d\varrho }  \right),
\phi-  \frac{\mathrm d \pi^1_t}{\mathrm d\varrho } 
\right\rangle_{L^2_\varrho}\ge 0,  
\quad \forall \phi\in \mathcal C\,, 
\end{equation}
where $\left\langle \cdot, \cdot \right\rangle_{L^2_\varrho} $ is the inner product on $L^2_\varrho(A)$,
and $\mathcal C$ is the nonempty closed convex set defined by 
$$
\mathcal C=\left\{\phi\in L^2_\varrho(A)\bigg\vert \textnormal{$\phi\ge 0$ $\varrho$-a.s.~on $A$
and $\int \phi(a)\varrho(da)=1$}\right\}\,.
$$
Define the projection map 
$\Pi_{\mathcal C}: L^2_\varrho(A)\mapsto \mathcal C$ such that 
$\Pi_{\mathcal C}(\varphi)=\argmin_{\phi\in \mathcal C}\|\phi-\varphi\|_{L^2_\varrho(A)}$ for all $\varphi \in L^2_\varrho(A)$, 
which satisfies 
$$
\left\langle 
\Pi(\varphi)-\varphi,
\phi-  \Pi(\varphi)
\right\rangle_{L^2_\varrho}\ge 0,  
\quad \forall \phi\in \mathcal C\,. 
$$
Then 
$$
\frac{\mathrm d \pi^1}{\mathrm d \varrho} = \Pi_{\mathcal C}\left( \frac{\mathrm d \pi^0_t }{\mathrm d\varrho }  -\frac{1}{\lambda }\frac{\delta H^{\tau}_t}{\delta m}(\Theta_t(\pi^0), \pi^{0}_t, \cdot)  \right)\,.
$$
As $\|\Pi_{\mathcal C}(\varphi_1)-\Pi_{\mathcal C}(\varphi_2)\|_{L^2_\varrho(A)}
\le \|\varphi_1-\varphi_2\|_{L^2_\varrho(A)}$ for all $\varphi_1,\varphi_2\in L^2_\varrho(A)$
(see e.g., \cite[Theorem 4.3-1]{ciarlet2013linear}), 
$\pi^1$ is     progressively measurable.
Moreover, since $\frac{\mathrm d \pi^0_t}{\mathrm d \varrho} = \Pi_{\mathcal C}\left(\frac{\mathrm d \pi^0_t}{\mathrm d \varrho}\right)$,
there exists $C\ge 0$ such that for a.e.~$(\omega,t)\in  \Omega\times [0,T] $,
\begin{align*}
\left \|   \frac{\mathrm d \pi^1_t}{\mathrm d \varrho} \right\|_{L^2_\varrho(A)} 
& \le  
\left \| \frac{\mathrm d \pi^0_t }{\mathrm d\varrho }\right\|_{L^2_\varrho(A)} + \frac{1}{\lambda }
\left \| \frac{\delta H^{\tau}_t}{\delta m}(\Theta_t(\pi^0), \pi^{0}_t, \cdot)\right\|_{L^2_\varrho(A)}
 \le  
C\left(1+ |Y_t(\pi^0)|+|Z_t(\pi^0)|+  \left \| \frac{\mathrm d \pi^0_t }{\mathrm d\varrho }\right\|_{L^2_\varrho(A)} \right)\,,
\end{align*}
where the last inequality used  
the definition of $H^\tau$ in  \eqref{eq: hamiltonian},
the flat derivative of $h$ at $\pi^0_t$ 
and  
the integrability assumptions of  $\frac{\delta b}{\delta m}, 
\frac{\delta \sigma}{\delta m}$ and $\frac{\delta f}{\delta m}$.
Using the   square integrability of $Y(\pi^0)$ and $Z(\pi^0)$ shows   that 
$\mathbb E\int_0^T\big \|   \frac{\mathrm d \pi^1_t}{\mathrm d \varrho} \big\|^2_{L^2_\varrho(A)} dt<\infty$.
The fact that  
$\mathbb E\int_0^T  D_h(\pi^1_t|\pi^0) \, d t < \infty$
and 
for fixed $\nu\in \mathfrak C$, 
$\mathbb E\int_0^T  D_h(\pi^1_t|\nu) \, d t < \infty$ follows directly from the definition of the Bregman divergence. 
This verifies Assumption \ref{ass iterate_wd} for $n=1$ 
and proves 
$\mathbb E\int_0^T\big \|   \frac{\mathrm d \pi^1_t}{\mathrm d \varrho} \big\|^2_{L^2_\varrho(A)} dt<\infty$.
An inductive argument shows that 
Assumption \ref{ass iterate_wd} is satisfied for all $n\in \N\cup\{0\}$. 
\end{proof}

\begin{proof}[Proof of Proposition \ref{prop:eot}]
Under the assumptions of $c$ and $A$, 
by \cite[Proposition 2]{feydy2019interpolating},
$m\to h(m)$ is continuous  (with respect to the weak topology)
and admits the   flat derivative given in the statement. 

We now  verify Assumption \ref{ass iterate_wd}.
To this end, consider the map 
$F: [0,T]\times \mathbb{R}^d\times \mathbb{R}^d\times  \mathbb{R}^{d\times d'}
\times \mathfrak C \times \mathfrak C \to \R$,
\begin{align}
\begin{split}
F(t,x,y,z,m, m') 
&=\int\frac{\delta H^{\tau}_t}{\delta m}(x,y,z,  m, a) m'(da) +    \lambda \left(h(m')-\int \phi [m](a)m'(da) \right)
\\
&=\int
\left(
\frac{\delta H^{0}_t}{\delta m}(x,y,z,  m, a) + (\tau-\lambda) \phi [m](a)
\right) m'(da)
+ \lambda  h(m') \,,
\end{split}
\end{align}
where the last identity used the definition of $H^\tau$ in  \eqref{eq: hamiltonian}.
By \cite[Proposition 13]{feydy2019interpolating}, 
the map $\mathfrak C\ni  \mu\mapsto \phi[\mu]\in C(A)$  is continuous,
where $\mathfrak C=\mathcal P(A)$
is equipped with the topology of  weak convergence of measures and
$C(A)$ is equipped with the topology of uniform convergence of functions. 
By the measurability of $\frac{\delta H^0}{\delta m}$ and the continuity of $\mathfrak C\ni  \mu\mapsto \phi[\mu]\in C(A)$, 
for all $m'\in \mathfrak C $, 
$(t,x,y,z,m)\mapsto F(t,x,y,z,m, m') $ is measurable. 
Moreover, by the continuity of $a\mapsto \frac{\delta H^0_t}{\delta m}(x,y,z,m,a)$ and the continuity of $m\mapsto h(m)$, 
for all $(t,x,y,z,m ) \in [0,T]\times \mathbb{R}^d\times \mathbb{R}^d\times  \mathbb{R}^{d\times d'}
\times \mathfrak C $, 
$m'\mapsto F(t,x,y,z,m, m') $ is continuous. 
This proves  that $F$ is a Carath\'{e}odory function. 
Since $A$ is   compact and separable, 
$\mathfrak C = \mathcal P(X)$ is a compact and  separable metrisable space (see e.g., 
\cite[Section 15]{guide2006infinite}).
Hence,  
by \cite[Theorem 18.19]{guide2006infinite}, 
there exists a measurable function $\mathfrak f:[0,T]\times \mathbb{R}^d\times \mathbb{R}^d\times  \mathbb{R}^{d\times d'}
\times \mathfrak C \to \mathfrak C$ such that for all 
$t\in [0,T]$, $(x,y,z)\in \mathbb{R}^d\times \mathbb{R}^d\times  \mathbb{R}^{d\times d'}$ 
and $m\in \mathfrak C$,
\begin{align}
\label{eq:maximum_selector}
\mathfrak f(t,x,y,z,m)\in 
\argmin_{m' \in \mathfrak C} F(t,x,y,z,m,m')\,.
\end{align}

Given    $\pi^0\in \mathcal{A}_{\mathfrak C}$, it is easy to see that 
$\pi^1_t \coloneqq \mathfrak f(t,X_t(\pi^0),Y_t(\pi^0), Z_t(\pi^0),\pi^0_t)$ 
for a.e.~$(\omega,t)\in  \Omega\times [0,T] $ satisfies \eqref{eq update the control bregman msa}. 
Note that the measurability of  $\mathfrak f$ implies the measurability of $\pi^1$. 
Moreover, by the compactness of $\mathfrak C$ and the continuity of $h$,
for a given $\nu \in \mathfrak C$,
\begin{align*}
\mathbb E\int_0^T  D_h(\pi_t|\nu) \, d t 
\le 2 T
\left(
\sup_{m\in \mathfrak C }h(m)+\|\phi [\nu]\|_{L^\infty} \right)< \infty\,,
\end{align*}
and similarly 
$$\mathbb E\int_0^T  D_h(\pi^1_t|\pi^0_t) \, d t
\le
2 T
\sup_{m\in \mathfrak C }\left(h(m)+\|\phi [m]\|_{L^\infty} \right)
<\infty,
$$
where the last inequality used the compactness of 
$\mathfrak C$ and the continuity of $\mathfrak C\ni  \mu\mapsto \phi[\mu]\in C(A)$.
An inductive argument allows for verifying  
Assumption \ref{ass iterate_wd}  for all $n\in \N\cup\{0\}$. 
\end{proof}

\section*{Acknowledgements}
Supported by the Alan Turing Institute under EPSRC grant no.~EP/N510129/1.

The authors would like to thank the anonymous reviewers for their insightful comments.
These have helped to greatly improve the revised version of the article.

\appendix

\section{Properties for  Bregman divergence over measures}
\label{appendix:bregman}

This section collects   basic properties of Bregman divergence used in this paper.
Throughout this section, we fix
a convex and measurable set  $\mathfrak C\subset \mathcal P(A)$
and a convex function 
$h:\mathfrak C\to \R$ that has    a flat derivative $\frac{\delta h}{\delta m}: \mathfrak C \times A\to \R$. 
Define 
$D_h(\cdot|\cdot):\mathfrak C\times \mathfrak C\to \R$ as in \eqref{eq:bregman_def}.

The following properties of $D_h$ follow from the definition. 
\begin{lemma} 
\label{lemma:properties-of-h-bregman}
For  all $m,m' \in \mathfrak C$,
\begin{enumerate}
\item 
\label{item:D_positive}
$ 
D_h(m'|m) \geq 0
$.
\item 
the function $\mathfrak C \ni m\mapsto D_h(m|m')\in \R$ is  convex and has a flat derivative
\[ \label{item:D_convex}  
\frac{\delta D_h(\cdot|m')}{\delta m}
=\frac{\delta h}{\delta m}(m,a)- \frac{\delta h}{\delta m}(m',a)\,.
\]
\item   
for all convex functions $g:\mathfrak C\to \R$  with   flat derivatives and all $\alpha,\beta \in \R$,
\[ \label{item: additivity}
D_{\alpha h+\beta g}(m'|m)
=\alpha  D_{h }(m'|m)+\beta  D_{g }(m'|m)\,.
\]
\item  for all $ \nu \in \mathfrak C $,   
\[ \label{item:breg of breg}
D_{D_h(\cdot|\nu)}(m'|m ) = D_h(m'| m)\,.
\]

\end{enumerate}
\end{lemma}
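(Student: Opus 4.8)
The plan is to prove the four items in sequence, each as a direct unravelling of Definition~\ref{def flat derivative} together with the convexity of $h$; Item~\ref{item:breg of breg} will moreover use Item~\ref{item:D_convex}. No auxiliary machinery beyond the defining formula~\eqref{eq:bregman_def} and the normalisation $\int\frac{\delta f}{\delta m}(m,a)\,m(da)=0$ should be needed.

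For Item~\ref{item:D_positive} I would argue from convexity alone. For $\varepsilon\in(0,1)$ put $m^\varepsilon=m+\varepsilon(m'-m)$; convexity gives $h(m^\varepsilon)\le(1-\varepsilon)h(m)+\varepsilon h(m')$, hence $\varepsilon^{-1}\bigl(h(m^\varepsilon)-h(m)\bigr)\le h(m')-h(m)$. Letting $\varepsilon\searrow 0$ and using~\eqref{eq in def of flat der} to identify $\lim_{\varepsilon\searrow0}\varepsilon^{-1}(h(m^\varepsilon)-h(m))$ with $\int\frac{\delta h}{\delta m}(m,a)(m'-m)(da)$ yields $\int\frac{\delta h}{\delta m}(m,a)(m'-m)(da)\le h(m')-h(m)$, which is exactly $D_h(m'|m)\ge 0$.

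For Item~\ref{item:D_convex}, using $\int\frac{\delta h}{\delta m}(m',a)\,m'(da)=0$ I would rewrite $D_h(m|m')=h(m)-h(m')-\int\frac{\delta h}{\delta m}(m',a)\,m(da)$; the first summand is convex in $m$ by hypothesis, the other two are affine in $m$, so $m\mapsto D_h(m|m')$ is convex. For the flat derivative I would differentiate along $m\mapsto m+\varepsilon(\tilde m-m)$: the $h(m)$ part contributes $\int\frac{\delta h}{\delta m}(m,a)(\tilde m-m)(da)$ by definition, the affine part contributes $-\int\frac{\delta h}{\delta m}(m',a)(\tilde m-m)(da)$, so the directional derivative is $\int\bigl[\frac{\delta h}{\delta m}(m,a)-\frac{\delta h}{\delta m}(m',a)\bigr](\tilde m-m)(da)$, giving the asserted flat derivative.

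For Item~\ref{item: additivity} I would first note that the flat derivative is linear, $\frac{\delta(\alpha h+\beta g)}{\delta m}=\alpha\frac{\delta h}{\delta m}+\beta\frac{\delta g}{\delta m}$ (immediate from linearity of the limit in~\eqref{eq in def of flat der} and of the normalisation), then substitute into~\eqref{eq:bregman_def} and regroup; here $D_{\alpha h+\beta g}$ is read through the defining formula~\eqref{eq:bregman_def}, which stays meaningful for arbitrary $\alpha,\beta\in\R$ since $\alpha h+\beta g$ has a flat derivative even when it is not convex. For Item~\ref{item:breg of breg}, set $\psi:=D_h(\cdot|\nu)$; by Item~\ref{item:D_convex}, $\frac{\delta\psi}{\delta m}(m,a)=\frac{\delta h}{\delta m}(m,a)-\frac{\delta h}{\delta m}(\nu,a)$, and directly $\psi(m')-\psi(m)=h(m')-h(m)-\int\frac{\delta h}{\delta m}(\nu,a)(m'-m)(da)$. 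Plugging both into $D_\psi(m'|m)=\psi(m')-\psi(m)-\int\frac{\delta\psi}{\delta m}(m,a)(m'-m)(da)$, the two $\frac{\delta h}{\delta m}(\nu,\cdot)$ contributions cancel and what remains is $h(m')-h(m)-\int\frac{\delta h}{\delta m}(m,a)(m'-m)(da)=D_h(m'|m)$. There is no real obstacle here; the only bookkeeping point is the zero-mean normalisation when differentiating a linear functional $m\mapsto\int\phi\,m(da)$ (whose flat derivative is $\phi(\cdot)-\int\phi\,m(da)$, not $\phi$), and since these derivatives always appear integrated against the zero-mass signed measure $m'-m$, such additive constants are immaterial.
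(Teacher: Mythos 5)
Your proposal is correct and follows essentially the same route as the paper: Item 1 via convexity of the difference quotients of $\varepsilon\mapsto h(m+\varepsilon(m'-m))$ (the paper uses monotonicity of the quotient, you use the per-$\varepsilon$ bound plus existence of the limit — an immaterial variation), Items 2 and 3 directly from the definitions, and Item 4 by expanding $D_{D_h(\cdot|\nu)}$ with the flat derivative from Item 2 and cancelling the $\frac{\delta h}{\delta m}(\nu,\cdot)$ terms. Your closing remark on the zero-mean normalisation being irrelevant when integrated against $m'-m$ correctly handles the only delicate bookkeeping point.
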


\begin{proof}
For  Item \ref{item:D_positive},  note that for all $0<s<t\le 1$,
$m+s (m'-m) =\frac{s}{t}(m+t (m'-m))+  \left( 1-\frac{s}{t}\right)  m  $, and hence by the convexity of $h$,
$$
h(m+s (m'-m))    \le \frac{s}{t}h(m+t (m'-m) ) + \left( 1-\frac{s}{t}\right)    h(m) \,,
$$
which implies that 
$[0,1]\ni \varepsilon\mapsto \frac{h(m+\varepsilon (m'-m))-h(m)}{\varepsilon}\in \R$  is increasing.
Hence   by the definition of $\frac{\delta h}{\delta m}$,
$$
{h(m' )-h(m)}\ge \lim_{\varepsilon \searrow  0}\frac{h(m+\varepsilon (m'-m))-h(m)}{\varepsilon}= \int \frac{\delta h}{\delta m}(m'-m)(da)\,,
$$
which proves  
$D_h(m'|m)\ge 0$. 

Items \ref{item:D_convex} and   \ref{item: additivity} follow  directly from the convexity and differentiability of $h$ and the definition of $D_h(\cdot|\nu)$.

For Item \ref{item:breg of breg},
by the definitions of $D_{D_h(\cdot|\nu)}$
and $\frac{\delta D_h(\cdot|\nu)}{\delta m}$
in Item \ref{item: additivity}, 
\begin{equation}
\begin{split}
 D_{D_h(\cdot|\nu)}(m'|m )  
 &= D_{h}(m'|\nu) - D_h( m| \nu ) 
- \int \frac{\delta D_h(\cdot|\nu)}{\delta m}(  m, a  )(m'-  m)(da) \\
&= D_{h}(m'|\nu) - D_h( m| \nu ) 
- \int \left(\frac{\delta h}{\delta m}(m,a)- \frac{\delta h}{\delta m}(\nu,a)\right)(m'-  m)(da)
\\
&=  h(m') - h(\nu) - \int \frac{\delta h}{ \delta m}(\nu,a)(m'-  \nu)(da) \\
&\quad  - 
\left(
h(m)  - h(\nu) - \int \frac{\delta h}{ \delta m}(\nu,a)(m -  \nu)(da) 
\right)\\
&\quad  - 
\int \frac{\delta h}{\delta m}(m,a)(m'-  m)(da)
+ \frac{\delta h}{\delta m}(\nu,a)(m'-  m)(da)
\\
&=  h(m') - h(m)  -  \int \frac{\delta h}{\delta m}(m,a)(m'-  m)(da)
=D_h(m'|m)\,.
\end{split}
\end{equation}
This completes the proof.
\end{proof}

We then prove a ``three point lemma" for the Bregman divergence $D_h(\cdot|m)$.

\begin{lemma} 
\label{lem three point}
Let  $\nu\in \mathfrak C $
and 
let 
$G:\mathfrak C \rightarrow \mathbb R$ be    convex and have  a flat derivative $\frac{\delta G}{\delta m}$.
Suppose that 
there exists  $\overline m\in \mathfrak C$ such that 
$ \overline m \in \argmin_{m\in \mathfrak C}
\left( G(m) + D_h(m|\nu) \right)$. 
Then for all  $ m'\in \mathfrak C$,
\begin{equation}
G(m') + D_h(m'|\nu) \geq G(\overline m ) + D_h(m'|\overline m ) + D_h(\overline m|  \nu)\,.
\end{equation}
\end{lemma}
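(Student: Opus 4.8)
The plan is to reduce the claim to a statement about the single convex function $\phi\coloneqq G+D_h(\cdot|\nu)$ on $\mathfrak C$, using that $\overline m$ is its minimiser together with the additivity and positivity properties recorded in Lemma~\ref{lemma:properties-of-h-bregman}. First I would check that $\phi$ admits a flat derivative on $\mathfrak C$: by Item~\ref{item:D_convex} of Lemma~\ref{lemma:properties-of-h-bregman} the map $m\mapsto D_h(m|\nu)$ is convex with flat derivative $\frac{\delta h}{\delta m}(m,a)-\frac{\delta h}{\delta m}(\nu,a)$, and the integrability $\int|\frac{\delta h}{\delta m}(\nu,a)|\,m'(da)<\infty$ needed here is built into Definition~\ref{def flat derivative} applied to $h$; hence $\phi$ is convex with $\frac{\delta\phi}{\delta m}(m,a)=\frac{\delta G}{\delta m}(m,a)+\frac{\delta h}{\delta m}(m,a)-\frac{\delta h}{\delta m}(\nu,a)$.

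The second step is the first-order optimality condition: for every $m'\in\mathfrak C$,
\[
\int\frac{\delta\phi}{\delta m}(\overline m,a)(m'-\overline m)(da)\ \ge\ 0.
\]
Since $\mathfrak C$ is convex, $\overline m^\varepsilon\coloneqq\overline m+\varepsilon(m'-\overline m)\in\mathfrak C$ for all $\varepsilon\in[0,1]$, and because $\overline m$ minimises $\phi$ over $\mathfrak C$ we have $\phi(\overline m^\varepsilon)\ge\phi(\overline m)$ for every such $\varepsilon$. Dividing by $\varepsilon>0$ and letting $\varepsilon\searrow 0$, the left-hand side converges to $\int\frac{\delta\phi}{\delta m}(\overline m,a)(m'-\overline m)(da)$ by the definition of the flat derivative of $\phi$, which yields the inequality.

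The third step combines this with the definition of the Bregman divergence of $\phi$, namely
\[
\phi(m')-\phi(\overline m)=D_\phi(m'|\overline m)+\int\frac{\delta\phi}{\delta m}(\overline m,a)(m'-\overline m)(da)\ \ge\ D_\phi(m'|\overline m),
\]
the inequality being exactly the optimality condition just proved. Finally, by additivity of the Bregman divergence (Item~\ref{item: additivity} of Lemma~\ref{lemma:properties-of-h-bregman}) together with the identity $D_{D_h(\cdot|\nu)}(m'|\overline m)=D_h(m'|\overline m)$ (Item~\ref{item:breg of breg}), one has $D_\phi(m'|\overline m)=D_G(m'|\overline m)+D_h(m'|\overline m)\ge D_h(m'|\overline m)$, where the last step uses $D_G\ge 0$ (Item~\ref{item:D_positive}). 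Substituting $\phi=G+D_h(\cdot|\nu)$ into $\phi(m')-\phi(\overline m)\ge D_h(m'|\overline m)$ and rearranging gives exactly $G(m')+D_h(m'|\nu)\ge G(\overline m)+D_h(\overline m|\nu)+D_h(m'|\overline m)$. The only point requiring genuine care is the passage to the one-sided directional derivative in the second step, since it amounts to exchanging the limit $\varepsilon\searrow 0$ with an integral against the signed measure $m'-\overline m$; this is precisely what Definition~\ref{def flat derivative} is designed to supply, so no argument beyond invoking it is needed.
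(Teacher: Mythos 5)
Your proposal is correct and follows essentially the same route as the paper's proof: define $\phi = G + D_h(\cdot|\nu)$, derive the first-order optimality condition at $\overline m$ via the flat derivative, and then combine the Bregman divergence of $\phi$ with the additivity, $D_{D_h(\cdot|\nu)} = D_h$, and nonnegativity properties from Lemma~\ref{lemma:properties-of-h-bregman}. No gaps to report.
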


\begin{proof}

Fix $m'\in  \mathfrak C$. 
Define 
$\mathcal G: \mathfrak C\to \R$ by 
$\mathcal G(m) = G(m) + D_h(m|\nu) $
for all $m\in \mathfrak C$.
The convexity and differentability of $G$ and $h$ imply that 
$\mathcal G$ has a flat derivative. 
As $\overline m$ minimises $\mathcal G$ over $\mathfrak C$,
\[
\int \frac{\delta \mathcal G}{\delta m}(\overline m,a)(m'- \overline m)(da) 
=\lim_{\varepsilon \searrow  0}\frac{ \mathcal G (\overline{m}+\varepsilon (m'-\overline{m} ))- \mathcal G(\overline{m})}{\varepsilon}\ge 0\,,
\]
which implies that 
\begin{align*}
D_{\mathcal G}(m'| \overline m) = \mathcal G(m') -\mathcal G (\overline m ) -  \int \frac{\delta \mathcal G}{ \delta m} (\overline m ,a)(m' - \overline m )(da) \le   \mathcal G(m') - \mathcal G (\overline m )\,.
\end{align*}
Adding $\mathcal G (\overline m )$ to both sides of the inequality and 
using the definition of $\mathcal G$ yield that 
\begin{align*}
G(m')+D_h(m'|\nu) 
&\ge 
D_{\mathcal G}(m'| \overline m)  +G(\overline m) +D_h( \overline m |\nu) 
\\
&=
D_{  G}(m'| \overline m) +D_{ D_h(\cdot|\nu)}(m'| \overline m)  +G(\overline m) +D_h( \overline m |\nu) 
\\
&\ge 
D_{  h}(m'| \overline m)  +G(\overline m) +D_h( \overline m |\nu)\,,
\end{align*}
where the first identity used 
Lemma \ref{lemma:properties-of-h-bregman} Item \ref{item: additivity} and 
the last inequality used 
Lemma \ref{lemma:properties-of-h-bregman} Items \ref{item:D_positive} and 
\ref{item:breg of breg}.
This completes the proof.
\end{proof}

\section{Proof of Lemma~\ref{lem lemma form siska szpruch}}
\label{sec:proof_of_dir_der_of_J0}

Throughout this section, fix 
$\pi,\pi' \in \mathcal A_{\mathfrak C}$  such that $\mathbb E \int_0^T D_h (\pi'_t|\pi_t)\,dt < \infty$.
For each $\varepsilon\in [0,1]$, let $\pi^\varepsilon = \pi + \varepsilon(\pi'-\pi)$, which lies in $ \mathcal A_{\mathfrak C}$ due to 
the convexity of  $\mathcal A_{\mathfrak C}$.  
For each $\varepsilon\in [0,1]$, let  $ X(\pi^\varepsilon) $ be a  solution to the state dynamics  \eqref{sde} controlled by $\pi^\eps$. Consider the following SDE:
for all $t\in [0,T]$,
\begin{equation}
\label{eq V proc linear} 
\begin{split}
dV_t & = \left[(D_x b_t)(X_t,\pi_t)V_t + \int \frac{\delta b_t}{\delta m}(X_t,\pi_t,a)(\pi'_t - \pi_t)(da) \right]	\,dt \\
&\quad   + \left[(D_x \sigma_t)(X_t,\pi_t)V_t + \int \frac{\delta \sigma_t}{\delta m}(X_t,\pi_t,a)(\pi'_t - \pi_t)(da) \right]\,dW_t\,;\quad  V_0 = 0\,.
\end{split}	
\end{equation} 
Under Assumptions \ref{assumption controlled SDE for modified MSA} and \ref{assum:regularity_flat_derivative},
$D_x b$ and $D_x \sigma$ are uniformly bounded, and 
\begin{align*}
&\left|\int \frac{\delta b_t}{\delta m}(X_t,\pi_t,a)(\pi'_t - \pi_t)(da) \right|^2   
+\left|\int \frac{\delta \sigma_t}{\delta m}(X_t,\pi_t,a)(\pi'_t - \pi_t)(da) \right|^2   
\\
&\le 2K D_h(\pi'_t|\pi_t)\,.
\end{align*}
This, along with $\mathbb E \int_0^T D_h (\pi'_t|\pi_t)\,dt < \infty$ and \cite[Theorem 3.4.3]{zhang2017backward} implies that 
$\E[\sup_{t\in [0,T]}|V_t|^2]<\infty$.

The following lemma proves that  $V$ is the derivative of  $\varepsilon\mapsto X^\eps$. 

\begin{lemma}
\label{lemma:dir_der_control}
Suppose that Assumptions \ref{assumption controlled SDE for modified MSA}, \ref{assumption:spatial_derivative} and \ref{assum:regularity_flat_derivative} hold. 
Then 
\[
\lim_{\varepsilon \searrow 0}\mathbb E \left[\sup_{t\in[0,T]} \left|\frac{X_t(\pi^\varepsilon) - X_t(\pi)}{\varepsilon} - V_t \right|^2\right] = 0\,.
\]
\end{lemma}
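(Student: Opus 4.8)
The plan is to set $R^\varepsilon_t := \varepsilon^{-1}(X_t(\pi^\varepsilon) - X_t(\pi)) - V_t$ and show that $R^\varepsilon$ solves a linear SDE with a source term that vanishes in $L^2(\Omega; C([0,T];\mathbb{R}^d))$ as $\varepsilon\searrow 0$, then invoke the standard stability estimate for linear SDEs (e.g.\ \cite[Theorem 3.4.3]{zhang2017backward}) to conclude. First I would write, using the fundamental theorem of calculus in the $x$-variable along the segment $X^{\varepsilon,\theta}_t := X_t(\pi) + \theta(X_t(\pi^\varepsilon)-X_t(\pi))$, and in the measure variable along $\pi^{\theta\varepsilon}$ (recalling $\pi^\varepsilon - \pi = \varepsilon(\pi'-\pi)$),
\begin{equation*}
\begin{split}
\frac{X_t(\pi^\varepsilon)-X_t(\pi)}{\varepsilon}
&= \int_0^t \left(\int_0^1 (D_xb_s)(X^{\varepsilon,\theta}_s,\pi^\varepsilon_s)\,d\theta\right) R^\varepsilon_s\,ds
+ \int_0^t \left(\int_0^1 (D_x\sigma_s)(X^{\varepsilon,\theta}_s,\pi^\varepsilon_s)\,d\theta\right) R^\varepsilon_s\,dW_s + E^\varepsilon_t,
\end{split}
\end{equation*}
where $E^\varepsilon$ collects the remaining terms. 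Subtracting the equation \eqref{eq V proc linear} for $V$, one finds that $R^\varepsilon$ satisfies a linear SDE driven by the (uniformly bounded, by Assumption~\ref{assumption controlled SDE for modified MSA}) averaged coefficients above, plus an inhomogeneity $G^\varepsilon_t := E^\varepsilon_t - \int_0^t(\cdots)V_s\,ds$ consisting of (i) the difference between the averaged derivatives at $(X^{\varepsilon,\theta}_s,\pi^\varepsilon_s)$ and the derivatives at $(X_s(\pi),\pi_s)$, multiplied by $V_s$, and (ii) the difference between $\int_0^1\int \frac{\delta b_s}{\delta m}(X_s(\pi),\pi^{\theta\varepsilon}_s,a)(\pi'_s-\pi_s)(da)\,d\theta$ and $\int\frac{\delta b_s}{\delta m}(X_s(\pi),\pi_s,a)(\pi'_s-\pi_s)(da)$, and similarly for $\sigma$.

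Next I would apply the linear-SDE stability estimate to get $\mathbb{E}[\sup_{t\in[0,T]}|R^\varepsilon_t|^2] \le C\,\mathbb{E}[\sup_{t\in[0,T]}|G^\varepsilon_t|^2]$ (the drift part handled via Gr\"onwall, the diffusion part via Burkholder--Davis--Gundy; the relevant Lipschitz constants are the uniform bounds on $D_xb,D_x\sigma$), so it remains to prove $\mathbb{E}[\sup_t|G^\varepsilon_t|^2]\to 0$. For the terms of type (i): by Lemma~\ref{lem standard sde estimate}, $\mathbb{E}[\sup_s|X_s(\pi^\varepsilon)-X_s(\pi)|^2]\le C\,\mathbb{E}\int_0^T D_h(\pi^\varepsilon_s|\pi_s)\,ds \le C\varepsilon\,\mathbb{E}\int_0^T D_h(\pi'_s|\pi_s)\,ds \to 0$, using $D_h(\pi^\varepsilon_s|\pi_s)\le \varepsilon D_h(\pi'_s|\pi_s)$; combined with Assumption~\ref{assumption:spatial_derivative} this controls $|(D_xb_s)(X^{\varepsilon,\theta}_s,\pi^\varepsilon_s)-(D_xb_s)(X_s(\pi),\pi_s)|^2 \le K(|X^{\varepsilon,\theta}_s-X_s(\pi)|^2 + D_h(\pi^\varepsilon_s|\pi_s))$, which times $|V_s|^2$ (and integrated, using $\mathbb{E}[\sup_s|V_s|^2]<\infty$ and a Cauchy--Schwarz/dominated-convergence argument, possibly passing to a subsequence with a.s.\ convergence) goes to $0$; the $\sigma$ and $f$ pieces are analogous, with BDG for the stochastic integral. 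For the terms of type (ii), I would use Assumption~\ref{assum:regularity_flat_derivative}: the continuity of $\theta\mapsto \int\frac{\delta b_s}{\delta m}(X_s(\pi),\pi^{\theta\varepsilon}_s,a)(\pi'_s-\pi_s)(da)$ at $\theta=0$ follows from Remark~\ref{rmk:FTC} applied to $\frac{\delta b_s}{\delta m}(X_s(\pi),\cdot,a)$, while the uniform bound $\big|\int\frac{\delta b_s}{\delta m}(X_s(\pi),\pi^{\theta\varepsilon}_s,a)(\pi'_s-\pi_s)(da)\big|^2\le K D_h(\pi'_s|\pi_s)$ (integrable by hypothesis) provides the dominating function for dominated convergence.

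The main obstacle I anticipate is the passage to the limit in the terms of type (ii): the integrand $\theta\mapsto\int\frac{\delta b_s}{\delta m}(X_s(\pi),\pi^{\theta\varepsilon}_s,a)(\pi'_s-\pi_s)(da)$ need not be Lipschitz or even H\"older in $\theta$, so quantitative rates are unavailable and one must argue purely via dominated convergence, being careful that the dominating bound $KD_h(\pi'_s|\pi_s)$ is uniform in $\theta$ and $\varepsilon$ and integrable over $\Omega\times[0,T]$—exactly the role of the hypothesis $\mathbb{E}\int_0^T D_h(\pi'_t|\pi_t)\,dt<\infty$. A secondary subtlety is that $V_s$ is not bounded, so the products appearing in type (i) must be handled with Cauchy--Schwarz and uniform integrability rather than a crude sup bound; passing to a subsequence along which $\sup_s|X_s(\pi^\varepsilon)-X_s(\pi)|\to 0$ a.s.\ and using the continuity of the derivatives in their arguments, together with the a priori $L^2$ bounds, resolves this, and since the limit is the same ($0$) along every subsequence, the full limit exists.
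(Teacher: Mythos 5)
Your overall strategy is the same as the paper's: set $R^\varepsilon=\varepsilon^{-1}(X(\pi^\varepsilon)-X(\pi))-V$, write a linear SDE for it whose coefficients are the (uniformly bounded) averaged derivatives $D_xb,D_x\sigma$ and whose source collects the remainder terms, invoke the stability estimate of \cite[Theorem 3.4.3]{zhang2017backward}, and kill the source: the $D_x$-difference terms exactly as in the paper, via Lemma~\ref{lem standard sde estimate} combined with the convexity bound $D_h(\pi^\varepsilon_s|\pi_s)\le\varepsilon D_h(\pi'_s|\pi_s)$, an a.s.-convergent subsequence, and dominated convergence using the square-integrability of $V$ and the boundedness of $D_xb,D_x\sigma$. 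The only difference of substance is in how you treat the measure-derivative remainders (your type (ii) terms), and there your argument as written has a gap. The continuity of $\theta\mapsto\int\frac{\delta b_s}{\delta m}(X_s(\pi),\pi^{\theta\varepsilon}_s,a)(\pi'_s-\pi_s)(da)$ does not follow from Remark~\ref{rmk:FTC} applied to $\frac{\delta b_s}{\delta m}(X_s(\pi),\cdot,a)$: that remark gives continuity in $\theta$ for each fixed $a$, and to pass this limit through the integral against $(\pi'_s-\pi_s)(da)$ you would need a dominating function in $a$, which the assumptions do not supply (only integrated bounds in $a$ are assumed, and $\frac{\delta b}{\delta m}$ may be unbounded).

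The paper closes exactly this point differently, and you have the tools to do the same: write the difference $\frac{\delta b_s}{\delta m}(\cdot,\pi^{\theta\varepsilon}_s,a)-\frac{\delta b_s}{\delta m}(\cdot,\pi_s,a)$ by the fundamental theorem of calculus as an integral of $\frac{\delta^2 b_s}{\delta m^2}$ against $\pi^{\theta\varepsilon}_s-\pi_s=\theta\varepsilon(\pi'_s-\pi_s)$, and then apply the second display of Assumption~\ref{assum:regularity_flat_derivative}, which bounds the resulting double integral by $K D_h(\pi'_s|\pi_s)$; this yields a quantitative $O(\varepsilon)\,D_h(\pi'_s|\pi_s)$ bound, so no continuity-in-$\theta$ argument (and no limit interchange over $a$) is needed. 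Relatedly, your ``similarly for $\sigma$'' glosses over the fact that no second-order bound is assumed for $\sigma$: Assumption~\ref{assum:regularity_flat_derivative} instead imposes $\frac{\delta^2\sigma_t}{\delta m^2}=0$, i.e.\ $\frac{\delta\sigma}{\delta m}$ does not depend on $m$, so the $\sigma$ type-(ii) term vanishes identically; this is how the paper disposes of it, and attempting your dominated-convergence route for the diffusion source would run into the same unjustified interchange. With these two repairs your proof coincides with the paper's.
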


\begin{proof}
To simplify the notation,
let $X=X(\pi)$ and 
for each $\varepsilon\in (0,1)$, let 
$X^\eps=X(\pi^\varepsilon)$ and 
$V^\eps =\frac{X^\eps - X}{\varepsilon} - V$. 
We also assume without loss of generality that all processes are one-dimensional. 
For each $\varepsilon\in (0,1)$,
using \eqref{sde} and \eqref{eq V proc linear}, we have 
$V^\varepsilon=0$ and for all $t\in [0,T]$, 
\begin{equation}
\label{eq X_eps-X-V} 
\begin{split}
dV^\varepsilon_t 
& = \bigg(\frac{b_t(X^\eps_t,\pi^\eps_t)-b_t(X_t,\pi_t)}{\varepsilon}
-(D_x b_t)(X_t,\pi_t)V_t - \int \frac{\delta b_t}{\delta m}(X_t,\pi_t,a)(\pi'_t - \pi_t)(da) \bigg)	\,dt \\
&   + \bigg( \frac{\sigma_t(X^\eps_t,\pi^\eps_t)-\sigma_t(X_t,\pi_t)}{\varepsilon}
- (D_x \sigma_t)(X_t,\pi_t)V_t + \int \frac{\delta \sigma_t}{\delta m}(X_t,\pi_t,a)(\pi'_t - \pi_t)(da) \bigg)\,dW_t\,.
\end{split}	
\end{equation} 
Note that using $X^\eps =X+\eps (V^\eps+V)$
\[
\begin{split}
& b_t(X^\varepsilon_t, \pi^\varepsilon_t) - b_t(X_t,\pi_t) 
= b_t(X^\varepsilon_t, \pi^\varepsilon_t) - b_t(X^\varepsilon_t, \pi_t) + b_t(X^\varepsilon_t,\pi_t) - b_t(X_t,\pi_t) \\
& = \varepsilon\int_0^1 \int \frac{\delta b_t}{\delta m}(X^\varepsilon_t, \pi_t +\lambda (\pi^\varepsilon_t -  \pi_t),a)(\pi'_t-\pi_t)(da)\,d\lambda
 + \varepsilon \int_0^1 (D_x b_t)(X_t + \lambda (X^\varepsilon_t -  X_t), \pi_t) (V^\varepsilon_t + V_t)\,d\lambda 
\,. 	
\end{split}
\]  
Hence, by rearranging the terms,
\[
\begin{split}
& 
\frac{b_t(X^\eps_t,\pi^\eps_t)-b_t(X_t,\pi_t)}{\varepsilon}
-(D_x b_t)(X_t,\pi_t)V_t - \int \frac{\delta b_t}{\delta m}(X_t,\pi_t,a)(\pi'_t - \pi_t)(da)
\\
& = \left( \int_0^1 (D_x b_t)(X_t + \lambda (X^\varepsilon_t -  X_t), \pi_t)\,d\lambda  \right) V^\varepsilon_t 
+   I^{(1)}_t + I^{(2)}_t\,,  
\end{split}
\]
where 
\begin{align}
I^{(1,\eps)}_t&= \int_0^1 \left[(D_x b_t)(X_t + \lambda (X^\varepsilon_t -  X_t), \pi_t) - (D_x b_t)(X_t,\pi_t) \right] V_t\,d\lambda\,,
\\
I^{(2,\eps)}_t
&= 
\int_0^1 \int \left[ \frac{\delta b_t}{\delta m}(X^\varepsilon_t, \pi_t +\lambda (\pi^\varepsilon_t -  \pi_t),a) - \frac{\delta b_t}{\delta m}(X^\varepsilon_t, \pi_t,a) \right](\pi'_t-\pi_t)(da)\,d\lambda \,.
\end{align}
Similarly, we have  
\[
\begin{split}
& 
\frac{\sigma_t(X^\eps_t,\pi^\eps_t)-\sigma_t(X_t,\pi_t)}{\varepsilon}
-(D_x \sigma_t)(X_t,\pi_t)V_t - \int \frac{\delta \sigma_t}{\delta m}(X_t,\pi_t,a)(\pi'_t - \pi_t)(da)
\\
& = \left( \int_0^1 (D_x \sigma_t)(X_t + \lambda (X^\varepsilon_t -  X_t), \pi_t)\,d\lambda  \right) V^\varepsilon_t 
+   J^{(1)}_t + J^{(2)}_t\,,  
\end{split}
\]
where 
\begin{align}
J^{(1,\eps)}_t&= \int_0^1 \left[(D_x \sigma_t)(X_t + \lambda (X^\varepsilon_t -  X_t), \pi_t) - (D_x \sigma_t)(X_t,\pi_t) \right] V_t\,d\lambda\,,
\\
J^{(2,\eps)}_t
&= 
\int_0^1 \int \left[ \frac{\delta \sigma_t}{\delta m}(X^\varepsilon_t, \pi_t +\lambda (\pi^\varepsilon_t -  \pi_t),a) - \frac{\delta \sigma_t}{\delta m}(X^\varepsilon_t, \pi_t,a) \right](\pi'_t-\pi_t)(da)\,d\lambda \,.
\end{align}
Then by \cite[Theorem 3.4.3]{zhang2017backward} and the uniform boundedness of $D_x b$ and $D_x \sigma$, 
\begin{align}
\label{eq:V_eps_estimate}
\begin{split}
 \mathbb E \left[\sup_{t\in[0,T]} \left| V^\eps_t \right|^2\right] 
& \le C\left[\left(\int_0^T (|I^{(1,\eps)}_t|+|I^{(2,\eps)}_t|)dt\right)^2
+ \int_0^T ( |J^{(1,\eps)}_t|^2+|J^{(2,\eps)}_t|^2)dt 
\right]
\\
& \le C\left[\int_0^T \left( |I^{(1,\eps)}_t|^2+|I^{(2,\eps)}_t| ^2
+|J^{(1,\eps)}_t|^2+|J^{(2,\eps)}_t|^2\right)dt
\right]\,.
\end{split}
\end{align}
Above and hereafter,  we denote by $C$ a generic constant independent of $\varepsilon$.

It remains to prove the terms on the right-hand side of \eqref{eq:V_eps_estimate} converges to zero as $\eps\to 0$.
Observe that by Lemma \ref{lem standard sde estimate} and the convexity of $m\mapsto D_h(m|\pi_s)$,
\begin{equation*}
\E\left[ \sup_{0\le t\le T}|X^\eps_t -X_t |^2\right]\le C\E\int_0^T\,D_h(\pi^\eps_s|\pi_s)\,ds
\le \eps  C\E\int_0^T\,D_h(\pi'_s|\pi_s)\,ds \,,
\end{equation*}
which along with $\E\int_0^T\,D_h(\pi'_s|\pi_s)\,ds<\infty$ implies that 
$\lim_{\eps\searrow 0}\E\left[ \sup_{t\in [0,T] }|X^\eps_t -X_t |^2\right]=0$.
Thus by subtracting a subsequence if necessary, we can assume without loss of generality that 
$\lim_{\eps\searrow 0} X^\eps =X$ for a.e.~$(\omega ,t)\in \Omega\times [0,T]$.
This along with the continuity of $D_x b$   in $x$ implies that for a.e.~$(\omega ,t)\in \Omega\times [0,T]$,
$$
\lim_{\eps\searrow 0}|(D_x b_t)(X_t + \lambda (X^\varepsilon_t -  X_t), \pi_t) - (D_x b_t)(X_t,\pi_t)|=0\,.
$$
As $D_x b$ are bounded by $K$, 
$ 
\sup_{\eps\in [0,1]}|I^{(1,\eps)}_t|^2\le 2K |V_t|^2$, 
and hence by the square integrability of $V$,  $\E\int_0^T \sup_{\eps\in [0,1]}|I^{(1,\eps)}_t|^2 dt <\infty$.
By Lebesgue's dominated convergence theorem,
$\lim_{\eps\searrow 0} \int_0^T  |I^{(1,\eps)}_t|^2 
dt
=0$. Similar argument shows that 
$\lim_{\eps\searrow 0} \int_0^T  |J^{(1,\eps)}_t|^2 
dt
=0$. 
For the term 
$I^{(2,\eps)}_t$, by Assumption \ref{assum:regularity_flat_derivative}
and Fubini's theorem, 
\begin{align*}
& |I^{(2,\eps)}_t|^2
= 
\left|\int_0^1 \int  \left( 
\int_0^1
\int  \frac{\delta^2 b_t}{\delta^2 m}(X^\varepsilon_t, \pi_t +\lambda \varepsilon' (\pi^\varepsilon_t -  \pi_t),a,a' ) \lambda (\pi^\eps_t-\pi_t)(da') d \eps' \right)(\pi'_t-\pi_t)(da)\,d\lambda \right|^2 
\\
&\le \frac{1}{2} \eps K D_h (\pi'_t|\pi_t)\,,
\end{align*}
and hence $\lim_{\eps\searrow 0} \int_0^T 
|I^{(2,\eps)}_t|^2dt =0$. 
Finally  
the condition that $\frac{\delta^2 \sigma}{\delta ^2 m}=0 $ implies that $J^{(2,\eps)}_t=0$ for all $\eps\in [0,1]
$. 
This along with 
\eqref{eq:V_eps_estimate} yields the desired conclusion. 
\end{proof}

The next lemma expresses the directional derivative of $J^0$ using the sensitivity process $V$.

\begin{lemma} 
\label{lemma:dir_der_J0_1}
Suppose  Assumptions~\ref{assumption controlled SDE for modified MSA}, \ref{assum:regularity_f_g},
\ref{assumption:spatial_derivative}
and 
\ref{assum:regularity_flat_derivative}. 
Then  
\[
\begin{split}
&\lim_{\eps\searrow 0}\frac{J^0(\pi^\eps)-J^0(\pi)}{\eps} 
 = \mathbb E \bigg[\int_0^T \bigg(\int \frac{\delta f_t}{\delta m}(X_t, \pi_t,a)(\pi'_t-\pi_t)(da) + (D_x f)(X_t,\pi_t) V_t\bigg)\,dt + (D_x g)(X_T)V_T \bigg]	\,.
\end{split}
\]
\end{lemma}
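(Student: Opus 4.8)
The plan is to split the difference quotient into a term that captures the variation of the control and a term that captures the induced variation of the state. Write $X := X(\pi)$ and, for $\varepsilon\in(0,1)$, $X^\varepsilon := X(\pi^\varepsilon)$, and use the pathwise identity $f_t(X^\varepsilon_t,\pi^\varepsilon_t)-f_t(X_t,\pi_t) = [f_t(X^\varepsilon_t,\pi^\varepsilon_t)-f_t(X^\varepsilon_t,\pi_t)] + [f_t(X^\varepsilon_t,\pi_t)-f_t(X_t,\pi_t)]$ to write $\varepsilon^{-1}(J^0(\pi^\varepsilon)-J^0(\pi)) = A_\varepsilon + B_\varepsilon$ with
\[
A_\varepsilon := \E\int_0^T \frac{f_t(X^\varepsilon_t,\pi^\varepsilon_t)-f_t(X^\varepsilon_t,\pi_t)}{\varepsilon}\,dt\,,\qquad B_\varepsilon := \E\left[\int_0^T \frac{f_t(X^\varepsilon_t,\pi_t)-f_t(X_t,\pi_t)}{\varepsilon}\,dt + \frac{g(X^\varepsilon_T)-g(X_T)}{\varepsilon}\right]\,.
\]
I would then prove $A_\varepsilon \to \E\int_0^T\int\frac{\delta f_t}{\delta m}(X_t,\pi_t,a)(\pi'_t-\pi_t)(da)\,dt$ and $B_\varepsilon \to \E[\int_0^T (D_xf_t)(X_t,\pi_t)V_t\,dt + (D_xg)(X_T)V_T]$; the splitting of the expectation is justified a posteriori, since each summand will be shown to be dominated by an $L^1(\Omega\times[0,T])$ function.

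For $A_\varepsilon$: since $\pi^\varepsilon_t-\pi_t=\varepsilon(\pi'_t-\pi_t)$ and $\mathfrak C$ is convex, Remark~\ref{rmk:FTC} applied to $f_t(X^\varepsilon_t,\cdot)$ gives $\varepsilon^{-1}(f_t(X^\varepsilon_t,\pi^\varepsilon_t)-f_t(X^\varepsilon_t,\pi_t)) = \int_0^1\int \frac{\delta f_t}{\delta m}(X^\varepsilon_t,\pi_t+\lambda\varepsilon(\pi'_t-\pi_t),a)(\pi'_t-\pi_t)(da)\,d\lambda$. By Assumption~\ref{assum:regularity_flat_derivative} the inner integral is bounded by $\sqrt{K\,D_h(\pi'_t|\pi_t)}$ uniformly in $\lambda,\varepsilon$, which by Cauchy--Schwarz and $\E\int_0^T D_h(\pi'_t|\pi_t)\,dt<\infty$ lies in $L^1(\Omega\times[0,T]\times[0,1])$. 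For the pointwise limit I would decompose the inner integral as $\int(\frac{\delta f_t}{\delta m}(X^\varepsilon_t,\pi_t+\lambda\varepsilon(\pi'_t-\pi_t),a)-\frac{\delta f_t}{\delta m}(X^\varepsilon_t,\pi_t,a))(\pi'_t-\pi_t)(da)$, which by a second application of Remark~\ref{rmk:FTC} (to $\frac{\delta f_t}{\delta m}(X^\varepsilon_t,\cdot,a)$), Fubini, and the bound on $\frac{\delta^2 f}{\delta m^2}$ in Assumption~\ref{assum:regularity_flat_derivative} is at most $\lambda\varepsilon K\,D_h(\pi'_t|\pi_t)$ and hence vanishes as $\varepsilon\searrow0$, plus $\int(\frac{\delta f_t}{\delta m}(X^\varepsilon_t,\pi_t,a)-\frac{\delta f_t}{\delta m}(X_t,\pi_t,a))(\pi'_t-\pi_t)(da)$, which by the fundamental theorem of calculus in $x$ and the bound on $D_x\frac{\delta f}{\delta m}$ in Assumption~\ref{assum:regularity_flat_derivative} is at most $|X^\varepsilon_t-X_t|\sqrt{K\,D_h(\pi'_t|\pi_t)}$. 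Since Lemma~\ref{lem standard sde estimate} gives $\E\sup_{t}|X^\varepsilon_t-X_t|^2\le C\varepsilon\,\E\int_0^T D_h(\pi'_t|\pi_t)\,dt\to 0$, along a subsequence $X^\varepsilon_t\to X_t$ for a.e.\ $(\omega,t)$, so the dominated convergence theorem delivers the claimed limit of $A_\varepsilon$ along that subsequence.

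For $B_\varepsilon$: put $W^\varepsilon_t := \varepsilon^{-1}(X^\varepsilon_t-X_t)$; by the fundamental theorem of calculus in $x$, $\varepsilon^{-1}(f_t(X^\varepsilon_t,\pi_t)-f_t(X_t,\pi_t)) = (\int_0^1 (D_xf_t)(X_t+\mu(X^\varepsilon_t-X_t),\pi_t)\,d\mu)\cdot W^\varepsilon_t$, and similarly for the $g$-term. By Lemma~\ref{lemma:dir_der_control}, $W^\varepsilon\to V$ in $L^2(\Omega;C([0,T];\R^d))$, so $\E\int_0^T|W^\varepsilon_t|^2\,dt$ stays bounded as $\varepsilon\searrow 0$. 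Writing the integrand as $(D_xf_t)(X_t,\pi_t)\cdot W^\varepsilon_t + R^\varepsilon_t\cdot W^\varepsilon_t$, the first part converges to $\E\int_0^T (D_xf_t)(X_t,\pi_t)\cdot V_t\,dt$ because $(D_xf_t)(X_t,\pi_t)$ is bounded by $K$ (Assumption~\ref{assum:regularity_f_g}) and $W^\varepsilon\to V$ in $L^2$; the remainder obeys $|\E\int_0^T R^\varepsilon_t\cdot W^\varepsilon_t\,dt| \le \|R^\varepsilon\|_{L^2(\Omega\times[0,T])}\,\|W^\varepsilon\|_{L^2(\Omega\times[0,T])}$, where $|R^\varepsilon_t|\le 2K$ and $R^\varepsilon_t\to 0$ a.e.\ by continuity of $D_xf$ in $x$ and $X^\varepsilon\to X$ a.e., so $\|R^\varepsilon\|_{L^2}\to 0$ by dominated convergence. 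The $g$-term is treated identically with $D_xg$ in place of $D_xf$. This gives the claimed limit of $B_\varepsilon$ along the subsequence, and since the limits of $A_\varepsilon$ and $B_\varepsilon$ do not depend on the subsequence chosen, a standard subsequence argument upgrades the convergence to $\lim_{\varepsilon\searrow 0}$. Adding the two limits yields the assertion.

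The main obstacle is the pointwise-limit step in $A_\varepsilon$: because the regularity of the coefficients in the measure variable is quantified only through the Bregman divergence and not through a metric on $\mathcal P(A)$, one cannot appeal to continuity of $m\mapsto\frac{\delta f_t}{\delta m}(x,m,\cdot)$; instead one must extract the $O(\varepsilon)$ decay from the second-order flat-derivative estimate in Assumption~\ref{assum:regularity_flat_derivative}, combine it with the $x$-regularity of $\frac{\delta f}{\delta m}$ and the $L^2$-stability of the state from Lemma~\ref{lem standard sde estimate}, and keep all the dominating functions in $L^1$ — which is precisely what forces the Cauchy--Schwarz step turning $D_h$ into $\sqrt{D_h}$ and makes the bookkeeping delicate.
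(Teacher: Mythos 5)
Your proposal is correct and follows essentially the same route as the paper: the same splitting of the difference quotient into a control-variation part (at the perturbed state) and a state-variation part, the flat-derivative/FTC expansions, the use of the derivative process $V$ from Lemma~\ref{lemma:dir_der_control}, and dominated convergence with the Bregman-divergence bounds of Assumption~\ref{assum:regularity_flat_derivative}. Your write-up merely makes explicit the domination estimates (the $\sqrt{K D_h}$ bound, the second-order flat-derivative $O(\varepsilon)$ term, the $D_x\frac{\delta f}{\delta m}$ term) and the subsequence step that the paper's proof leaves implicit.
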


\begin{proof}
It suffices to prove 
$I_\varepsilon \coloneqq   I^{(1)}_\varepsilon + I^{(2)}_\varepsilon \to 0$ as $\varepsilon\to 0$, where
\begin{align*}
I^{(1)}_\varepsilon 
&\coloneqq 
\mathbb E  \int_0^T \bigg|\frac{ f_t(X^\varepsilon_t,\pi^\varepsilon_t) - f_t(X_t,\pi_t)}{\varepsilon} 
- \int \frac{\delta f_t}{\delta m}(X_t,\pi_t,a)\,(\pi'_t-\pi_t)(da) - (D_x f_t)(X_t,\pi_t) V_t  \bigg|  \,dt \, ,   
\end{align*}
and
\[
I^{(2)}_\varepsilon \coloneqq  \mathbb E \Big| \frac{g(X^\varepsilon_T)-g(X_T)}{\varepsilon}  - (D_x g)(X_T)V_T \Big|\,. 
\]
To estimate $I^{(1)}_\varepsilon$,  first note that by Assumption \ref{assum:regularity_f_g},
\[
\begin{split}
& 
\frac{f_t(X^\varepsilon_t,\pi^\varepsilon_t) - f_t(X_t,\pi_t)}{\varepsilon} 
=
\frac{f_t(X^{\varepsilon}_t, \pi^\varepsilon_t) - f_t(X^{\varepsilon}_t, \pi_t) + f_t(X^{\varepsilon}_t, \pi_t) - f_t(X_t, \pi_t)}{\varepsilon} 
\\
& =  \int_0^1 \int \frac{\delta f_t}{\delta m}(X^\varepsilon_t, \pi_t +\lambda (\pi^\eps _t -\pi_t), a )(\pi'_t - \pi_t)(da)	\,d\lambda
+ \int_0^1(D_x f_t)(X_t + \lambda (X^\eps_t-X_t),\pi_t)(V^{\varepsilon}_t - V_t)\,d\lambda\,,
\end{split}
\]
where $V^\eps =\frac{X^\eps -X}{\eps}-V $.
Hence we can  write $I^{(1)}_\varepsilon \leq  I^{(1,1)}_\varepsilon + I^{(1,2)}_\varepsilon + I^{(1,3)}_\varepsilon$, where
\begin{align*}
I^{(1,1)}_\varepsilon & \coloneqq \mathbb E \int_0^T \bigg|  \int_0^1 \int \bigg[\frac{\delta f_t}{\delta m}(X^\varepsilon_t,  \pi_t +\lambda (\pi^\eps _t -\pi_t), a )
-\frac{\delta f_t}{\delta m}(X^\varepsilon_t,\pi_t,a)\bigg]
(\pi'_t - \pi_t)(da)d\lambda \bigg|dt\,,
\\
I^{(1,2)}_\varepsilon & \coloneqq \mathbb E \int_0^T \bigg|  \int_0^1 \int \bigg[\frac{\delta f_t}{\delta m}(X^\varepsilon_t,\pi_t,a)-\frac{\delta f_t}{\delta m}(X_t,\pi_t,a)\bigg]
(\pi'_t - \pi_t)(da)d\lambda \bigg|dt\,,
\\
I^{(1,3)}_\varepsilon &\coloneqq  \mathbb E \int_0^T \bigg|\int_0^1(D_x f_t)(X_t + \lambda (X^\eps_t-X_t),\pi_t)(V^{\varepsilon}_t - V_t) \,d\lambda - (D_x f_t)(X_t,\pi_t) V_t \bigg|\,dt \,.
\end{align*}
Recall that Lemma \ref{lemma:dir_der_control} shows that 
$$
\lim_{\eps\searrow 0}
\left(\E\left[ \sup_{t\in [0,T] }|X^\eps_t -X_t |^2\right]
+\E\left[ \sup_{t\in [0,T] }|V^\eps_t |^2\right]\right)=0\,.
$$
Hence using Assumptions \ref{assum:regularity_f_g} and \ref{assum:regularity_flat_derivative}
and Lebesgue's dominated convergence theorem,
$\lim_{\eps\searrow 0} I^{(1)}=0$.
The convergence of $I^{(2)}$ can be established by similar arguments. 
This finishes the proof. 
\end{proof}

\begin{proof}[Proof of Lemma~\ref{lem lemma form siska szpruch}]

The proof follows exactly the same lines as those  of ~\cite[Lemma 3.5]{vsivska2020gradient},
by first applying It\^{o}'s formula to $Y_t^\top V_t$
and using    Lemma \ref{lemma:dir_der_J0_1} and the definition of $H^0$ in \eqref{eq: hamiltonian}. The details are omitted.  
\end{proof}

\section{Proof of Lemma~\ref{lem standard sde estimate}}
\label{sec proof of sde stability estimate}

\begin{proof}[Proof of Lemma~\ref{lem standard sde estimate}]
Fix $\pi,\pi'\in \mathcal{A}_{\mathfrak C} $. Observe that  for all $t\in [0,T]$, 
\begin{equation*}
\begin{split}
&X_t(\pi)-X_t(\pi')
=\int_0^t\big(b_s(X_s(\pi),\pi_s)-b(X_s(\pi'),\pi'_s)\big)\,ds+\int_0^t\big(\sigma_s(X_s(\pi),\pi_s)-\sigma_s(X_s(\pi'),\pi'_s)\big)\,dW_s\,.
\end{split}
\end{equation*}
Using  the inequality $(a+b)^2\le 2a^2+2b^2$ and   H\"older's inequality, 
\begin{equation*}
\begin{split}
&|X_t(\pi)-X_t(\pi')|^2\\
&=\left|\int_0^t\left(b_s(X_s(\pi),\pi_s)-b(X_s(\pi'),\pi'_s)\right)\,ds+\int_0^t\left(\sigma_s(X_s(\pi),\pi_s)-\sigma_s(X_s(\pi'),\pi'_s)\right)\,dW_s \right|^2\\
&\le 2\left|\int_0^t\left(b_s(X_s(\pi),\pi_s)-b(X_s(\pi'),\pi'_s)\right)\,ds\right|^2+2\left|\int_0^t\left(\sigma_s(X_s(\pi),\pi_s)-\sigma_s(X_s(\pi'),\pi'_s)\right)\,dW_s\right|^2\\
&\le 2t\int_0^t\left|b_s(X_s(\pi),\pi_s)-b(X_s(\pi'),\pi'_s)\right|^2\,ds+2\left|\int_0^t\left(\sigma_s(X_s(\pi),\pi_s)-\sigma_s(X_s(\pi'),\pi'_s)\right)\,dW_s\right|^2\,.
\end{split}
\end{equation*}
Let $t'\in [0,T]$.
Taking supremum over $t\in[0,t']$ yields 
\begin{equation*}
\begin{split}
\sup_{0\le t\le t'} |X_t(\pi)-X_t(\pi')|^2  
&\le 2T\int_0^{t'}\left|b_s(X_s(\pi),\pi_s)-b(X_s(\pi'),\pi'_s)\right|^2\,ds\\
&\quad+2\sup_{0\le t\le t'}\left|\int_0^t\left(\sigma_s(X_s(\pi),\pi_s)-\sigma_s(X_s(\pi'),\pi'_s)\right)\,dW_s\right|^2\,.
\end{split}
\end{equation*}
By taking the expectation on both sides and  using   the Burkholder-Davis-Gundy inequality, 
\begin{equation*}
\begin{split}
\E\left[ \sup_{0\le t\le {t'}}|X_t(\pi)-X_t(\pi')|^2\right] 
& \le 2T\E \left[ \int_0^{t'}\left|b_s(X_s(\pi),\pi_s)-b(X_s(\pi'),\pi'_s)\right|^2\,ds\right]\\
& \quad+2C \E\left[ \int_0^{t'}\left|\sigma_s(X_s(\pi),\pi_s)-\sigma_s(X_s(\pi'),\pi'_s)\right|^2\,ds\right]
\\
& \le 2TK \E\int_0^{t'}\left(|X_s(\pi)-X_s(\pi')|^2+D_h(\pi_s|\pi'_s)\right)\,ds\\
&\quad +2KC \E\int_0^{t'}\left(|X_s(\pi)-X_s(\pi')|^2+D_h(\pi_s|\pi'_s)\right)\,ds\,,
\end{split}
\end{equation*}
where the last inequality used 
the  assumption of the lemma.  
We thus have, with some $C>0$ depending on $K$ and $T$, for any $t'\in[0,T]$, that
$
y(t') \leq C\int_0^{t'} y(s)\,ds + b(t') 
$
where $y(t):=\E\left[ \sup_{0\le s\le t}|X_{s}(\pi)-X_{s}(\pi')|^2\right]$ and $b(t):= \int_0^t D_h(\pi_s|\pi'_s)\,ds$.
Hence, by Gr\"onwall's inequality, we have $y(T) \leq b(T)e^{CT}$.
In other words
\begin{equation*}
\begin{split}
\E\left[ \sup_{0\le t\le T}|X_t(\pi)-X_t(\pi')|^2\right]\le C\E\int_0^T\,D_h(\pi_s|\pi'_s)\,ds\,.
\end{split}
\end{equation*}
This finishes the proof. 
\end{proof}

\bibliographystyle{siam} 
\bibliography{Bibliography.bib}

\end{document}